\documentclass[a4 paper, 12pt]{article}

\usepackage{amsmath,mathrsfs,pictexwd,dcpic}
\usepackage{amssymb}
\usepackage[left = 3cm,right=2cm,top=3cm]{geometry}
\usepackage{amsthm}
\usepackage{caption}
\usepackage{graphicx}
\usepackage{newlfont,amsfonts}
\usepackage{ytableau}
\usepackage{enumitem}
\usepackage{setspace}
\usepackage{hyperref}
\theoremstyle{plain}

\newtheorem{theorem}{Theorem}
\newtheorem{lemma}[theorem]{Lemma}
\newtheorem{corollary}[theorem]{Corollary}
\theoremstyle{definition}
\newtheorem{definition}[theorem]{Definition}
\newtheorem{example}[theorem]{Example}
\theoremstyle{remark}
\newtheorem{remark}[theorem]{Remark}
\newtheorem{notation}[theorem]{Notation}

\newcommand{\seqnum}[1]{\href{https://oeis.org/#1}{\rm \underline{#1}}}
\allowdisplaybreaks
\title{$p^{(k)}$-Fibonacci Numbers of the $p$-Bratteli Diagram for Every Odd Prime $p$ and Integer $k\geq0$}
\author{M. Parvathi, A. Tamilselvi, and D. Hepsi\\
	Ramanujan Institute for Advanced Study in Mathematics\\ University of Madras \\ Chennai-600 005 \\ India\\ tamilselvi.riasm@gmail.com }
\date{}
\begin{document}
	\maketitle
	\begin{abstract}
		We study paths in the $p$-Bratteli diagram associated with hook partitions, where $p$ is an odd prime. By comparing blocks along a path, we define inversions and descents. We prove that the sign balance derived from inversions vanishes at every vertex of the diagram. Using descents, we introduce the $p^{(k)}$-Fibonacci numbers and derive recurrence relations for them. For $k=0$, we recover the OEIS sequence \seqnum{A391520}, while for $k \ge 1$ we obtain new families of Fibonacci-type sequences.
	\end{abstract}
	\section{Introduction}
	
	Researchers first introduced Bratteli diagrams to study finite-dimensional $C^*$-algebras, and later, mathematicians adopted them as an important combinatorial tool in the representation theory of groups and algebras. When vertices are indexed by partitions, Bratteli diagrams provide a clear and computable description of how representations evolve from one level to the next.
	
	In this paper, we focus on the $p$-Bratteli diagram introduced in~\cite{[TH],[TH1]} for an odd prime $p$. Its even levels consist of vertices indexed by hook partitions corresponding to a complete set of inequivalent irreducible representations of the group algebras $KG_r$, while its odd levels consist of vertices indexed by hook partitions corresponding to a complete set of inequivalent irreducible representations of certain subalgebras $KSG_r$. Apart from its representation-theoretic importance, the $p$-Bratteli diagram also has a rich and regular combinatorial structure.
	
	Fibonacci numbers are classical integer sequences introduced by Leonardo of Pisa in the \emph{Liber Abaci}. They appear in many areas of mathematics, and several generalizations such as $k$-Fibonacci numbers and $q$-Fibonacci numbers have been studied extensively; see, for example,~\cite{[AA],[GS],[MA],[SS]}. These generalizations often reflect deeper combinatorial or algebraic structures.
	
	The main motivation of this paper is that Fibonacci-type behavior arises naturally from the path structure of the $p$-Bratteli diagram. We study paths whose vertices are indexed by hook partitions and describe each path as a sequence of blocks added to a partition. By comparing the horizontal and vertical parts of blocks of the same size, we define \emph{inversions} and \emph{descents} for such paths. These notions are natural analogues of classical permutation statistics \cite[p.\ 221]{[BS]}.
	
	In our study of inversions, we assign a sign to each path according to the number of inversions and define the \emph{sign balance} at a vertex as the sum of the signs of all paths ending at that vertex. We prove that this sign balance is zero at every vertex of the $p$-Bratteli diagram, showing a strong cancellation phenomenon.
	
	Our main result is the introduction of a new family of integers, called the \emph{$p^{(k)}$-Fibonacci numbers}. For $s \ge 1$, we define these numbers as the total number of descents over all paths ending at a fixed vertex
	\[
	\lambda\big(2r;(x_{(2s,k)},x_{(s,k)}+l)\big).
	\]
	We denote this number by $\mathcal{M}\Big(\lambda\big(2r;(x_{(2s,k)},x_{(s,k)}+l)\big)\Big)$. We justify the terminology $p^{(k)}$-Fibonacci numbers in Theorem \ref{rr}. By studying the local structure of the $p$-Bratteli diagram,  we show that these numbers satisfy Fibonacci-type recurrence relations.

	In the case $k=0$, we obtain a closed formula and identify the resulting sequence in Theorems~\ref{p0} listed as \seqnum{A391520} in the On-Line Encyclopedia of Integer Sequences. For $k \ge 1$, we obtain several infinite families of $p$-tuples of sequences in Theorems~\ref{p1}, \ref{base}, \ref{less}, and \ref{greater}, that, to the best of our knowledge, are new. This provides a systematic link between descent statistics on the $p$-Bratteli diagram and Fibonacci-type sequences. We believe that these sequences will lead to further applications and developments in future studies.
	
	We make the paper self-contained, with the necessary background recalled where needed and organized as follows. In Section~$2$, we introduce notation and basic definitions. Section~$3$ describes the construction of the $p$-Bratteli diagram. Section~$4$ introduces inversions and descents and proves the vanishing of the sign balance. Section~$5$ studies the $p^{(k)}$-Fibonacci numbers, their recurrence relations,  and examples. In Section~$6$, we discuss the generating functions for the sequence of $p^{(k)}$-Fibonacci numbers  $\big(\mathcal{M}\big(\lambda\big(2r;(x_{(2s,k)},x_{(s,k)}+l)\big)\big)\big)^{\infty}_{s= k+2}$ are discussed for every odd prime $p$ and every $k\geq0$. 
	
	\section{Notation and definitions}
	\begin{notation}\label{not}
		\item
		\begin{enumerate}
			\item Throughout this paper, $p$ denotes an odd prime number. The symbols 
			$$a,~b,~i,~i^{'},~j,~k,~l,~l^{'},~l_{1},~l_{2},~m,~n,~t,~t_{1},~t_{2},~t^{'},~t^{''}, \alpha,~\beta,~\iota,~\mu,~\theta$$  denote non-negative integers. The symbols $r,~s$ denote positive integers. The symbols $m_{i}~\text{and}~n_{i}$ denote non-negative integers where $i=1,2,\ldots,2r-1$.
			\item $j^{k}_{t}=t\sum\limits_{i=0}^{k-1}p^{i}$, where $0\leq t \leq p-1$ and $k\geq 1$.	
			\item $[a, a+1\ldots,b]$ denotes the non-negative integers from $a$ to $b$. 
			\item $x_{(a,k)}=p^{k}(ap-(a+1))$, $a\geq 1$ and $k\geq 0$. \label{x_{a,k}}
		\end{enumerate}
		\end{notation}
	\begin{definition} \cite[p.\ 2]{[BS]}
		Let $\lambda_{1}$, $\lambda_{2}$, $\ldots$, $\lambda_{l}$ be non-negative integers. A partition of $n$ is a sequence $\lambda=(\lambda_{1},\lambda_{2},\ldots,\lambda_{l})$ where $\lambda_{i}$ are weakly decreasing and $\sum_{i=1}^{l}\lambda_{i}=n$.
	\end{definition}
	\begin{definition} \cite[p. \ 58]{[BS]}
		Let $\lambda_{1}$, $\lambda_{2}$, $\ldots$, $\lambda_{l}$ and $\mu_{1}$, $\mu_{2}$, $\ldots$, $\mu_{m}$ be non-negative integers. Suppose $\lambda=(\lambda_{1},\lambda_{2},\ldots,\lambda_{l})$ and $\mu=(\mu_{1},\mu_{2},\ldots,\mu_{m})$ are partitions of $n$. Then $\lambda$ dominates $\mu$, written $\lambda \trianglerighteq \mu$, if $$\lambda_{1}+\lambda_{2}+\cdots+\lambda_{i}\geq \mu_{1}+\mu_{2}+\cdots+\mu_{i} $$ for all $i \geq 1$. If $i>l$ (respectively, $i>m$), then we take $\lambda_{i}$ (respectively $\mu_{i}$) to be zero.
	\end{definition}	
	\begin{definition}
A partition $\lambda$ of $n$ is called a hook partition if $\lambda=(n-i,1^{i})$ for some $0 \leq i < n$.
If such a hook partition appears on the $c$-th floor, we denote it by $\lambda(c;(n,i))$.
The corresponding Young diagram consists of $n-i$ horizontal nodes and $i$ vertical nodes.
	\end{definition}
	\begin{notation}
		\item
		\begin{enumerate}
		\item $B(i;(m,n))$ denotes the $i$-th block in a hook partition $\lambda$ of size $m+n$ which contains $m$ nodes horizontally and $n$ nodes vertically, where by vertical nodes we mean the nodes in the first column other than the first row. In particular, $B(i;(0,0))$ denotes the empty block.
	\item $\lambda\overset{B(i;(m,n))}{\hookrightarrow}\mu$ denotes the partition $\mu$ obtained by adding the $i$-th block $B(i;(m,n))$ to $\lambda$.	
	\item $\lambda\overset{B(i;(m,n))}{\hookleftarrow}\mu$ denotes the partition $\mu$ obtained by removing the $i$-th block $B(i;(m,n))$ from $\lambda$ .
			\end{enumerate}
		\end{notation}
	\section{The \texorpdfstring{$p$}{p}-Bratteli diagram}
	In this section, we define the vertices and edges of the $p$-Bratteli diagram in an alternative form from \cite{[TH1]}. The hook partitions correspond to the vertices. We also explain the ordering and arrangement of these vertices within the $p$-Bratteli diagram. We discuss each path in the $p$-Bratteli diagram in detail, and introduce a simplified notation for representing such paths. Moreover, each path in the $p$-Bratteli diagram corresponds to a standard $p$-Young tableau, as defined in \cite[Thm. \ 3.4, 3.5]{[TH1]}. 
	
	For $r\geq 1,$ the $p$-Bratteli diagram consists of vertices in the sets $V^{2r}$ in the $2r$-th floor and $W^{2r-1}$ in the $(2r-1)$-th floor together with the corresponding edges defined below.
	\subsection{Vertex set of the \texorpdfstring{$p$}{p}-Bratteli diagram}\label{vertex}
	\subsubsection*{Vertices on the $(2r-1)$-th floor for $r\geq 1$:}
	$$W^{2r-1}= \bigcup_{k=0}^{r-1} W^{2r-1}_{k} $$
	where
	\begin{itemize}
		\item $W_{r-1}^{2r-1} = \{\lambda\big(2r-1;(p^{r-1}(p-1),i)\big)|0\leq i<p^{r-1}(p-1)\}$. 
		\item 	
		$W^{2r-1}_{k} =
		\left\{
		\lambda\big(2r-1;(x_{(2(r-k)-1,k)},\,
			x_{(r-k-1,k)}+l^{'})\big) 
		~\big{|}
		0 \le l^{'} < p^{k+1}
		\right\},~|W^{2r-1}_{k}|=p^{k+1}$, for all $0\leq k \leq r-2,\; r \ge 2$, where $l^{'}$ can also be written as $l^{'}=l+p^{k}t$, with $0\leq l<p^{k}$ and $0\leq t\leq p-1$.
	\end{itemize}
	
	The vertex subsets are arranged in the following order: $$W^{2r-1}_{r-1},~W^{2r-1}_{r-2},~W^{2r-1}_{r-3},\ldots,~W^{2r-1}_{1},~W^{2r-1}_{0}.$$
	
	The vertices in each vertex subset $W^{2r-1}_{k}$ are ordered according to the dominance order, where $l^{'}$ corresponds to the position of the vertex.
	\subsubsection*{Vertices on the $2r$-th floor for $r\geq 1$:}
	$$V^{2r}=\bigcup_{k=0}^{r}V^{2r}_{k}$$
	where
	\begin{itemize}
		\item $V^{2r}_{r} = \{\lambda\big(2r;(p^{r-1}(p-1),i)\big)|0\leq i<p^{r-1}(p-1)\}$. 
		
		\item $
		V^{2r}_{k} =
		\left\{
		\begin{array}{l}
			\lambda\big(2r;(x_{(2(r-k),k)},\,
				x_{(r-k,k)}+l)\big) ~\big{|}
			0 \le l < p^{k}
		\end{array}
		\right\},~|V^{2r}_{k}|=p^{k}$, for all $0\leq k \leq r-1$.
	\end{itemize}
	
	The vertex subsets are arranged in the following order: $$V^{2r}_{r},~V^{2r}_{r-1},~V^{2r}_{r-2},\ldots,~V^{2r}_{1},~V^{2r}_{0}.$$
	The vertices in each vertex subset $V^{2r}_{k}$ are ordered according to the dominance order, where $l$ corresponds to the position of the vertex.
	\begin{remark}\cite{[TH1]}
		\begin{enumerate}
			\item For $r \geq 1$, the elements of $V_{r}^{2r}~\big(\text{or}~W_{r-1}^{2r-1}\big)$ correspond to the indexing sets for all one dimensional representations of the class of group algebras $KG_{r}$ (or subalgebras $KSG_{r}$).
			\item The elements of $V_{k}^{2r}~\big(\text{or}~W_{k}^{2r-1}\big)$ with $0\leq k \leq r-1$ (or $0\leq k\leq r-2$) correspond to the indexing set for the complete set of inequivalent irreducible representations of dimension $p^{r-k-1}(p-1)~\big(\text{or}~p^{r-k-2}(p-1)\big)$ of the class of group algebras $KG_{r}$ (or subalgebras $KSG_{r}$), for $r\ge 1~\left(\text{or} ~r\ge 2\right)$, respectively.
		\end{enumerate}
	\end{remark}
	\subsection{Edge Set of the \texorpdfstring{$p$}{p}-Bratteli Diagram}\label{edge}
	We define the edges between two vertices of two consecutive floors of the $p$-Bratteli diagram as:
	\subsubsection*{Edges from the $(2r-2)$-th floor to the $(2r-1)$-th floor:}
	\begin{itemize}
		\item Let $\lambda\big(2r-2;(p^{r-2}(p-1),i)\big)\in V^{2r-2}_{r-1}$ and  $\lambda\big(2r-1;(p^{r-1}(p-1),pi+t)\big)\in W^{2r-1}_{r-1}$, then for all $0\leq t\leq p-1$, there is an edge between the vertices by adding the block $B(2r-2;(a_{t},b_{t}))$, as shown in Figure \ref{1}
		\begin{figure}[!ht]
			\centering
			\includegraphics[width=0.7\linewidth]{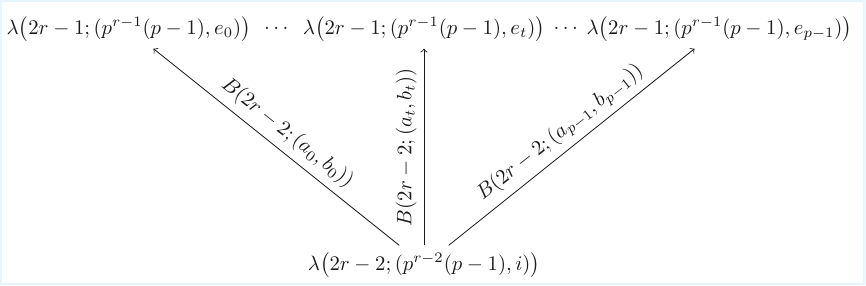}
			\caption{}
			\label{1}
		\end{figure}
		
		where $e_{t}=pi+t$, $a_{t}=p^{r-2}(p-1)^{2}-(i(p-1)+t)$, $b_{t}=i(p-1)+t$.
		$$\lambda\big(2r-2;(p^{r-2}(p-1),i)\big)\overset{B(2r-2;(a_{t},b_{t}))}{\hookrightarrow} \lambda\big(2r-1;(p^{r-1}(p-1),pi+t)\big)$$		
			\item Let $\lambda\big(2r-2;(x_{(2(s-1),k)},x_{(s-1,k)}+l)\big)\in V^{2(r-1)}_{k}$ and $\lambda\big(2r-1;(x_{(2s-1,k)},x_{(s-1,k)}+(pl+t))\big)\in W^{2r-1}_{k}$, then for all $0\leq t\leq p-1$, there is an edge between the vertices by adding the block $B(2r-2;(a_{t},b_{t}))$, as shown in Figure \ref{2},
		\begin{figure}[!ht]
			\centering
			\includegraphics[width=0.7\linewidth]{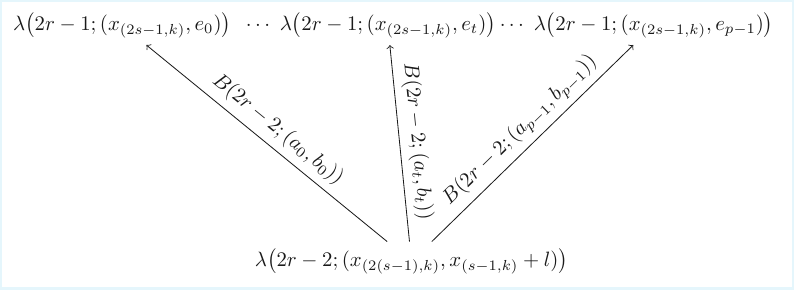}
			\caption{}
			\label{2}
		\end{figure}
		
			where $s=r-k$, $e_{t}=x_{(s-1,k)}+(pl+t)$, $a_{t}=p^{k}(p-1)-((p-1)l+t)$, $b_{t}=(p-1)l+t$, $0\leq l <p^{k}$, and $x_{(a,k)}$ is as in Notation \ref{not}(\ref{x_{a,k}}). 
		$$\lambda\big(2r-2;(x_{(2(s-1),k)},x_{(s-1,k)}+l)\big)\overset{B(2r-2;(a_{t},b_{t}))}{\hookrightarrow} \lambda\big(2r-1;(x_{(2s-1,k)},x_{(s-1,k)}+(pl+t))\big)$$
	
	\end{itemize}
	\subsubsection*{Edges from the $(2r-1)$-th floor to the $2r$-th floor:}
	\begin{itemize}
		\item Let $\lambda\big(2r-1;(p^{r-1}(p-1),i)\big)\in W^{2r-1}_{r-1}$ and $\lambda\big(2r;(p^{r-1}(p-1),i)\big)\in V^{2r}_{r}$, then there is an edge between the vertices by adding the empty block $B(2r-1;(0,0))$, $$\lambda\big(2r-1;(p^{r-1}(p-1),i)\big)\overset{B(2r-1;(0,0))}{\hookrightarrow}\lambda\big(2r;(p^{r-1}(p-1),i)\big)$$ 
		\item Let $\lambda\big(2r-1;(p^{r-1}(p-1),i)\big)\in W^{2r-1}_{r-1}$ with $p^{r-1}t\leq i<p^{r-1}(t+1)$, where $0\leq t\leq p-2$, and let $\lambda\big(2r;(x_{(2,r-1)},i+p^{r-1}(p-2-t))\big)\in V^{2r}_{r-1}$. 
		Then there is an edge between the vertices obtained by adding the block $B(2r-1;(p^{r-1}t,p^{r-1}(p-2-t)))$,
		$$\lambda\big(2r-1;(p^{r-1}(p-1),i)\big)  \overset{B(2r-1;(p^{r-1}t,p^{r-1}(p-2-t)))}{\hookrightarrow}\lambda\big(2r;(x_{(2,r-1)},i+p^{r-1}(p-2-t))\big).$$
		\item Let $\lambda\big(2r-1;(x_{(2s-1,k)},x_{(s-1,k)}+(l+p^{k}t))\big)\in W^{2r-1}_{k}$ and $\lambda\big(2r;(x_{(2s,k)},x_{(s,k)}+l)\big) \in V^{2r}_{k}$. Then for $0\leq t\leq p-1$,  there is an edge between the vertices obtained by adding the block $B(2r-1;(p^{k}t,p^{k}(p-1-t)))$,  $$\lambda\big(2r-1;(x_{(2s-1,k)},x_{(s-1,k)}+(l+p^{k}t))\big)\overset{B(2r-1;(p^{k}t,p^{k}(p-1-t)))}{\hookrightarrow}\lambda\big(2r;(x_{(2s,k)},x_{(s,k)}+l)\big)$$ 
		where $s=r-k$, $0\leq l<p^{k}$, and $x_{(a,k)}$ is as in Notation \ref{not}(\ref{x_{a,k}}).
	\end{itemize}
	\subsubsection*{Edges from the $2r$-th floor to the $(2r-1)$-th floor:}
	\begin{itemize}
		\item Let $\lambda\big(2r;(p^{r-1}(p-1),i)\big)\in V^{2r}_{r}$ and $\lambda\big(2r-1;(p^{r-1}(p-1),i)\big)\in W^{2r-1}_{r-1}$, then there is an edge between the vertices by removing the empty block $B(2r-1;(0,0))$, $$\lambda\big(2r;(p^{r-1}(p-1),i)\big)\overset{B(2r-1;(0,0))}{\hookleftarrow}\lambda\big(2r-1;(p^{r-1}(p-1),i)\big)$$ 
		\item  Let $\lambda\big(2r;(x_{(2,r-1)},x_{(1,r-1)}+l)\big) \in V^{2r}_{r-1}$ and $\lambda\big(2r-1;(p^{r-1}(p-1),x_{(1,r-1)}+l-p^{r-1}(p-2-t))\big)\in W^{2r-1}_{r-1}$, then for $0\leq t\leq p-2$ there is an edge between the vertices obtained by removing the block $B(2r-1;(a_{t},b_{t}))$, as shown in Figure \ref{3},
		\begin{figure}[!ht]
			\centering
			\includegraphics[width=0.7\linewidth]{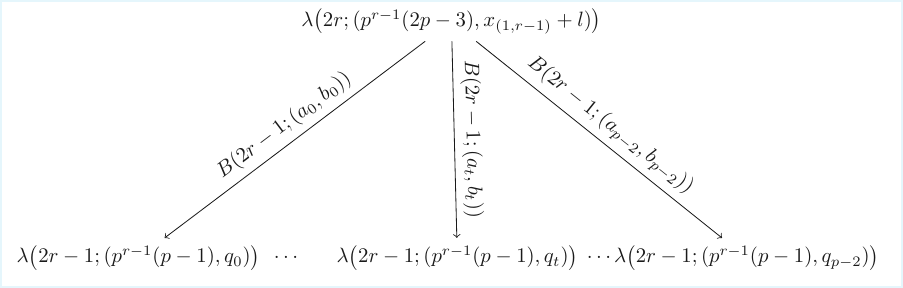}
			\caption{}
			\label{3}
		\end{figure}
				
		where $q_{t}=x_{(1,r-1)}+l-p^{r-1}(p-2-t)$, $a_{t}=p^{r-1}t,~b_{t}=p^{r-1}(p-2-t)$, $0\leq l<p^{k}$, and  $x_{(1,r-1)}=p^{r-1}(p-2)$.
		\begin{multline*}
				\lambda\big(2r;(x_{(2,r-1)},x_{(1,r-1)}+l)\big)\overset{B(2r-1;(a_{t},b_{t}))}{\hookleftarrow} \\ \lambda\big(2r-1;(p^{r-1}(p-1),x_{(1,r-1)}+l-p^{r-1}(p-2-t))\big)
		\end{multline*}		
		\item Let $\lambda\big(2r;(x_{(2s,k)},x_{(s,k)}+l)\big)\in V^{2r}_{k}$ and $\lambda\big(2r-1;(x_{(2s-1,k)},x_{(s-1,k)}+(l+p^{k}t))\big)\in W^{2r-1}_{k}$, then for $0\leq t\leq p-1$ there is an edge between the vertices obtained by removing the block $B(2r;(a_{t},b_{t}))$, as shown in Figure \ref{4}, 
		\begin{figure}[!ht]
			\centering
			\includegraphics[width=0.7\linewidth]{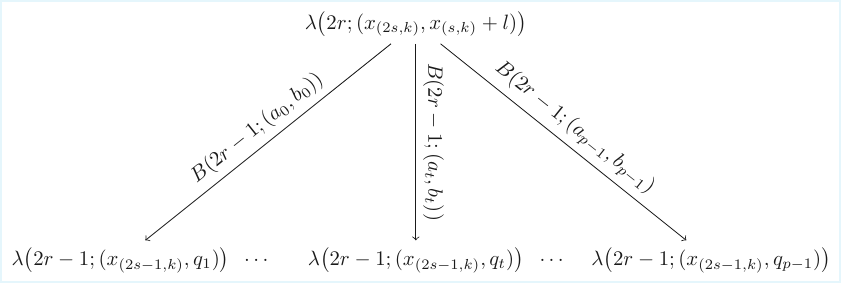}
			\caption{}
			\label{4}
		\end{figure}
		
			where $s=r-k$, $q_{t}=x_{(s-1,k)}+(l+p^{k}t)$, $a_{t}=p^{k}t$, $b_{t}=p^{k}(p-1-t)$, $0\leq l <p^{k}$ and $x_{(a,k)}$ is as in Notation \ref{not}(\ref{x_{a,k}}).
		$$\lambda\big(2r;(x_{(2s,k)},x_{(s,k)}+l)\big)\overset{B(2r;(a_{t},b_{t}))}{\hookleftarrow}\lambda\big(2r-1;(x_{(2s-1,k)},x_{(s-1,k)}+(l+p^{k}t))\big)$$
	
	\end{itemize}
	\subsubsection*{Edges from the $(2r-1)$-th floor to the $2(r-1)$-th floor:}
	\begin{itemize}
		\item Let $\lambda\big(2r-1;(p^{r-1}(p-1),i)\big)\in W^{2r-1}_{r-1}$ with $\bar{i}p\leq i <(\bar{i}+1)p$, where $0\leq \bar{i} < p^{r-2}(p-1)$ and let $\lambda\big(2r-2;(p^{r-2}(p-1),\bar{i})\big)\in V^{2r-2}_{r-1}$. Then there is an edge between the vertices obtained by removing the block $B(2r-2;(p^{r-2}(p-1)^{2}-(i-\bar{i}),(i-\bar{i})))$,  $$\lambda\big(2r-1;(p^{r-1}(p-1),i)\big)\overset{B(2r-2;(p^{r-2}(p-1)^{2}-(i-\bar{i}),(i-\bar{i})))}{\hookleftarrow}\lambda\big(2r-2;(p^{r-2}(p-1),\bar{i})\big).$$
		\item Let $\lambda\big(2r-1;(x_{(2s-1,k)},x_{(s-1,k)}+(l+p^{k}t))\big)\in W^{2r-1}_{k}$ and $\lambda\big(2r-2;(x_{(2(s-1),k)},x_{(s-1,k)}+(\alpha+p^{k-1}t))\big)\in V^{2r-2}_{k}$, then there is exactly one edge from the vertex $\lambda\big(2r-1;(x_{(2s-1,k)},x_{(s-1,k)}+(l+p^{k}t))\big)$ to the vertex $\lambda\big(2r-2;(x_{(2(s-1),k)},x_{(s-1,k)}+(\alpha+p^{k-1}t))\big)$ by removing the block as given below:
		\begin{multline*}
			\lambda\big(2r-1;(x_{(2s-1,k)},x_{(s-1,k)}+(l+p^{k}t))\big) \\	\overset{B(2r-2;(p^{k}(p-1)-(p-1)(\alpha+p^{k-1}t)-\beta,(p-1)(\alpha+p^{k-1}t)+\beta))}{\hookleftarrow} \\ \lambda\big(2r-2;(x_{(2(s-1),k)},x_{(s-1,k)}+(\alpha+p^{k-1}t))\big)  
		\end{multline*}
		Here $s=r-k$, $0\leq l <p^{k}$ and $l= \alpha p + \beta$, where $0\leq \alpha < p^{k-1}$, $0\leq \beta \leq p-1,~0\leq t\leq p-1$ and $x_{(a,k)}$ is as in Notation \ref{not}(\ref{x_{a,k}}).
	\end{itemize}
	\begin{remark}\label{Proj}
		For $0\leq t\leq p-1,$ the vertex $\lambda\big(2r-2;(x_{(2(s-1),k)},x_{(s-1,k)}+(\alpha+p^{k-1}t))\big)$ on the $(2r-2)$-th floor is called the \textit{projection} of the vertex $\lambda\big(2r-1;(x_{(2s-1,k)},x_{(s-1,k)}+(l+p^{k}t))\big)$ on the $(2r-1)$-th floor. The integer $\alpha$ is called the \textit{projection} of the integer $l$.
	\end{remark}		
	\begin{remark}\label{syt def}
		\item
		\begin{enumerate}
			\item In this paper, we focus only on the descents defined between two adjacent blocks of the same size. But every path is obtained by adding the blocks of different sizes $p^{j-1}(p-1)^{2},~j\geq 1$, as explained below
			\begin{multline*}
				\lambda\big(1;(p-1,i_{1})\big)\overset{B(1;(0,0))}{\hookrightarrow	}\lambda\big(2;(p-1,i_{1})\big)\overset{B(2;(m_{2},n_{2}))}{\hookrightarrow}\lambda\big(3;(p(p-1),i_{2})\big)\\ \overset{B(3;(0,0))}{\hookrightarrow}  \lambda\big(4;(p(p-1),i_{2})\big)\overset{B(4;(m_{4},n_{4}))}{\hookrightarrow}\lambda\big(5;(p^{2}(p-1),i_{3})\big)\cdots\\ \overset{B(2k;(m_{2k},n_{2k}))}{\hookrightarrow}\lambda\big(2k+1;(p^{k}(p-1),i_{k+1})\big)
			\end{multline*} 
			where $0\leq i_{1}<p-1$, and $i_{j}=i_{1}+n_{2}+n_{4}+\cdots+n_{2(j-1)}$,  $m_{2(j-1)}=p^{j-2}(p-1)^{2}-((p-1)i_{2j-3}+t_{j})$, $n_{j-1} = (p-1)i_{2j-3}+t_{j}$, for $0\leq t_{j}\leq p-1$, for all $2\leq j\leq k+1$, and $k\geq 0$. \label{ak,bk}
			
			We make the convention that the path obtained by adding these blocks has no descent, since $m_{2(j-1)}<m_{2j}$ and $n_{2(j-1)}\leq n_{2j}$, for all $2\leq j \leq k+1$.
			\item 	For $k\geq 0$ and $r\geq k+1$, let $r-k=s$. We define a downward path starting at $\lambda\big(2r;(x_{(2s,k)},x_{(s,k)}+l)\big)$, where $0\leq l<p^{k}$, and ending at $\lambda\big(1;(p-1,i_{1})\big)$, where $0\leq i_{1}<p-1$ as follows:
			\begin{multline*}
				\lambda\big(2r;(x_{(2s,k)},x_{(s,k)}+l)\big)\overset{f_{1}}{\hookleftarrow}\lambda\big(2r-1;(x_{(2s-1,k)},x_{(s-1,k)}+(l+p^{k}t_{k}))\big)\overset{f_{2}}{\hookleftarrow} \\ \lambda\big(2r-2;(x_{(2s-2,k)},x_{(s-1,k)}+l_{1})\big)  \overset{f_{3}}{\hookleftarrow} \cdots    \lambda\big(2k+3;(x_{(3,k)},x_{(1,k)}+(l_{s-2}+p^{k}t_{r-3}))\big) \\ \overset{f_{2(r-k)-2}}{\hookleftarrow}\lambda\big(2k+2;(x_{(2,k)},x_{(1,k)}+l_{s-1})\big) 
				\overset{f_{2(r-k)-1}}{\hookleftarrow}\lambda\big(2k+1;(p^{k}(p-1),i_{k+1})\big){\hookleftarrow}\cdots \\ \overset{f_{2r-1}}{\hookleftarrow} \lambda\big(1;(p-1,i_{1})\big)
			\end{multline*}
			where $f_{d}=B(2r-d;(m_{2r-d},n_{2r-d}))$, $1\leq d\leq 2r-1$.
			
			For integers $t_{i}$ with $0\leq t_{i}\leq p-1$ for $0\leq i \leq k-1$, the integer $l$ can be written as $l=\sum\limits_{i=0}^{k-1}p^{i}t_{i}$. Then, for each $l$, there exist integers $t_{i}$ with $0\leq t_{i}\leq p-1$ for $k\leq i \leq r-2$, and $0\leq t_{r-1}\leq p-2$, such that the corresponding path ends at $\lambda\big(1;(p-1,t_{r-1})\big)$.
			
			Let $\alpha_{1}$ be the projection of $l$ (as in Remark \ref{Proj}), that is, $\alpha_{1}=\sum\limits_{i=1}^{k-1}p^{i-1}t_{i}$. Then: 
			\begin{itemize}
				\item $l_{j}=\alpha_{j}+p^{k-1}t_{k-1+j}$, $1\leq j \leq s-1$, and $\alpha_{j}=\sum\limits_{i=j}^{k+j-2}p^{i-j}t_{i}$ for $2\leq j \leq s-1$, where $\alpha_{j+1}$ is the projection of $l_{j}$ (as in Remark \ref{Proj}).
				\item For $k\geq 1$, $k+1\leq j\leq r-1$, and $0\leq t_{k+r-j-1} \leq p-1,$
				\begin{align*}
					m_{2j+1}&=p^{k}t_{k+r-j-1}, & n_{2j+1}&=p^{k}(p-1-t_{k+r-j-1}) \\
					m_{2j} & =p^{k}(p-1)-n_{2j-1}, & n_{2j} & = (p-1)(\alpha_{r-j}+p^{k-1}t_{k+r-j-1})+t_{r-j-1} 
				\end{align*}				
				and 
				\begin{align*}
					m_{2k+1}&=p^{k}t_{r-1}, & n_{2k+1}&=p^{k}(p-1-t_{r-1}), ~0\leq t_{r-1}\leq p-2.
				\end{align*}
				\item $i_{k+1}=l_{s-1}+p^{k}t_{r-1}=\sum\limits_{i=r-k-1}^{r-1}p^{i-(r-k-1)}t_{i}$ and for $1\leq j \leq k$, $$i_{j}= \sum\limits_{i=r-j}^{r-1}p^{i-(r-j)}t_{i} ~\text{is the projection of}~i_{j+1}= \sum\limits_{i=r-j-1}^{r-1}p^{i-(r-j-1)}t_{i}.$$
				\item For $1\leq j \leq k,$
				\begin{align*}
					m_{2j-1} & =0, & n_{2j-1} & = 0 \\
					m_{2j}&=p^{k-1}(p-1)^{2}-n_{2j}, & n_{2j}&=(p-1)i_{j}+t_{r-j-1} 
				\end{align*}				
				
			\end{itemize}
			In particular, when $k=0$, we have $l=0$ and $l_{j}=0$ for all $1\leq j \leq r-1$. Moreover,
			\begin{align*}
				m_{2j+1}&=t_{k+r-j-1}, & n_{2j+1}&=p-1-t_{k+r-j-1} \\
				m_{2j} & =p-1-t_{k+r-j-1}, & n_{2j} & =t_{r-j-1} \text{ and}\\
				m_{1}&=t_{r-1}, & n_{1}&=p-1-t_{r-1}, ~0\leq t_{r-1}\leq p-2.
			\end{align*}
			
			The downward path defined above is the same as the standard $p$-Young tableau of shape  $\lambda\big(2r;(x_{(2s,k)},x_{(s,k)}+l)\big)$ as defined in \cite{[TH1]}. 
			
			For $k\geq0$ and a given $l$, the path is determined by the integers $t_{i}$ for $0\leq i \leq r-1$. For simplicity, we rewrite the above path as $$\left(\lambda\big(2r;(x_{(2s,k)},x_{(s,k)}+l)\big),m_{2r-1},m_{2r-2},\ldots,m_{1},\lambda\big(1;(p-1,t_{r-1})\big)\right)$$ where $i_{1}=t_{r-1}$. 
			\label{blocklabel}	
			\item Let $P^{2r}_{l,m_{2r-1},\ldots,m_{1},t_{r-1}}$ denote the path $$\left(\lambda\big(2r;(x_{(2s,k)},x_{(s,k)}+l)\big),m_{2r-1},\ldots,m_{1},\lambda\big(1;(p-1,t_{r-1})\big)\right)$$ as defined above.
			Let $P^{2r}_{l,t_{r-1}}$ denote the set of all paths starting at the vertex $\lambda\big(2r;(x_{(2s,k)},x_{(s,k)}+l)\big)$ and ending at the vertex $\lambda\big(1;(p-1,t_{r-1})\big)$, $0\leq t_{r-1}<p-1$, which are obtained by removing the blocks successively. Let $P^{2r}_{l}=\bigcup\limits_{t_{r-1}=0}^{p-2}P^{2r}_{l,t_{r-1}}$. \label{pathset}
				\end{enumerate}		
	\end{remark}
	\section{Inversions and descents of paths }
	In this section, we define the notions of inversion and descent of a path in the $p$-Bratteli diagram using the blocks added along the path. Inversions (or descents) are determined by comparing the horizontal nodes or vertical nodes of the blocks of the same size, namely $B(i;(m_{i},n_{i}))$ and $B(j;(m_{j},n_{j}))~(\text{or}~B(i+1;(m_{i+1},n_{i+1})))$, for $i<j$. We do not compare blocks of different sizes except in the following case. 

For the blocks $B(2k;(m_{2k},n_{2k}))$ and $B(2k+1;(m_{2k+1},n_{2k+1}))$ of sizes $p^{k}(p-1)^{2}$ and $p^{k}(p-2)$, we introduce a specific convention, since the sizes of the respective blocks are different,
\begin{enumerate}[label=\textbf{(c\arabic*)}]
	\item \label{c1} Consider the path $\lambda\big(2k+2;(x_{(2,k)},x_{(1,k)}+l)\big)\overset{B(2k+1;(p^{k}t,p^{k}(p-2-t)))}{\hookleftarrow} \lambda\big(2k+1;(p^{k}(p-1),l+p^{k}t)\big),~0\leq t \leq p-2$, we say that there is a descent (or inversion) at $2k$ of the path if it is obtained by removing the block $B(2k+1;(p^{k}t,p^{k}(p-2-t)))$, where $0\leq t <\frac{p-1}{2};$ otherwise, there is no descent (or inversion). When $k=0$, the zeroth block is the hook partition $\lambda(1;(p-1,t_{r-1}))$, where $0\leq t_{r-1}\leq p-2.$ 
	\item \label{c2} Since the blocks $B(2k+1;m_{2k+1},n_{2k+1}))$ and $B(2k+2;(m_{2k+2},n_{2k+2}))$ have different sizes, $p^{k}(p-2)$ and $p^{k}(p-1)$, respectively, they are not comparable; therefore, there is no descent (or inversion) at block $2k+1$.
\end{enumerate}
\begin{definition}\label{block1}
	For $k\geq 0$ and $2k+2\leq i<j$, we write  $B(i;(m_{i},n_{i}))>B(j;(m_{j},n_{j}))$ if $m_{i}>m_{j}$ and $n_{i}<n_{j}$, where $m_{i}+n_{i}= m_{j}+n_{j}$.
\end{definition}
\subsection{Inversions and sign balances of paths}
\begin{definition}\label{inversion}
	Let $k\geq 0$, and let $P$ be the path $$\left(\lambda\big(2r;(x_{(2s,k)},x_{(s,k)}+l)\big),m_{2r-1},m_{2r-2},\ldots,m_{1},\lambda\big(1;(p-1,t_{r-1})\big)\right)$$ as in Remark \ref{syt def}(\ref{blocklabel}). Also, let $B(i;(m_{i},n_{i}))$, $B(j;(m_{j},n_{j}))$ be the blocks added along the path $P$, with $i<j$.  

The \textit{inversion of a path $P$} is defined as follows:
	\begin{displaymath}
		Inv(P)=
		\begin{cases}
			\{(i,j):~ i<j ~\text{but $B(i;(m_{i},n_{i}))>B(j;(m_{j},n_{j}))$}\}, & \text{for $i\geq 2k+2$;} \\  \text{inversion as in \ref{c1} and \ref{c2}}, & \text{for}~i=2k,~2k+1
		\end{cases}
	\end{displaymath}
	where $m_{i}+n_{i}= m_{j}+n_{j}$ for $i\geq 2k+2$ and $inv(P)=|Inv(P)|$.
\end{definition}

\begin{definition}
	We define the \textit{sign of a path $P$} by $sgn(P) = (-1)^{inv(P)}$.
\end{definition}
 This sign balance is a structural property of paths in the $p$-Bratteli diagram.
\begin{definition}
		Given a partition $\lambda$, define its sign balance by $$I_{\lambda}=\sum_{P\in P^{2r}_{l}}sgn(P),$$ where $\lambda=\lambda\big(2r;(x_{(2s,k)},x_{(s,k)}+l)\big)$ and $P^{2r}_{l}$ as defined in Remark \ref{syt def}(\ref{pathset}).
\end{definition}
The following theorem motivates our convention \ref{c1}.
\begin{theorem}
	Let $k\geq0$, $r\geq 1$ with $s=r-k.$ The sign balance for every vertex of shape $\lambda\big(2r;(x_{(2s,k)},x_{(s,k)}+l)\big)\in V^{2r}_{k}$ is zero.
\end{theorem}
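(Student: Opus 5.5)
Our plan is to exhibit a sign-reversing involution on $P^{2r}_{l}$. The natural candidate is a single ``switch'' at the base of the path, where convention \ref{c1} lives. By Remark \ref{syt def}(\ref{blocklabel}), once $l$ is fixed, a path in $P^{2r}_{l}$ is determined by the free parameters $t_{k},\ldots,t_{r-2}\in[0,\ldots,p-1]$ and $t_{r-1}\in[0,\ldots,p-2]$. The parameter $t_{r-1}$ governs the block $B(2k+1;(p^{k}t_{r-1},p^{k}(p-2-t_{r-1})))$, and it also determines the lower blocks and the end vertex $\lambda(1;(p-1,t_{r-1}))$.

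\textbf{Involution.} Since $p$ is odd, the set $\{0,1,\ldots,p-2\}$ has an even number $p-1$ of elements. It splits into $\{0,\ldots,\frac{p-3}{2}\}$, where \ref{c1} declares an inversion at $2k$, and $\{\frac{p-1}{2},\ldots,p-2\}$, where it does not. We define
\[
\phi(P)=P' ,
\]
where $P'$ has the same $t_{k},\ldots,t_{r-2}$ (and the same $l$) but $t_{r-1}$ replaced by $t_{r-1}'=t_{r-1}+\frac{p-1}{2}$ or $t_{r-1}-\frac{p-1}{2}$, whichever lies in range. This map is a fixed-point-free involution, and it toggles exactly one inversion at position $2k$.

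\textbf{Invariance of the other inversions.} It remains to check that all other inversions are unchanged, so that $inv(P')=inv(P)\pm1$ and the signs of $P$ and $P'$ cancel. By Definition \ref{inversion} and convention \ref{c2}, the only remaining inversions are pairs $(i,j)$ with $2k+2\le i<j$ and $m_i+n_i=m_j+n_j$. The blocks of index $\ge 2k+2$ have sizes $p^{k}(p-1)$ (at odd positions $2j+1$, $j\ge k+1$) and $p^{k}(p-1)$ at even positions. We need to see which of these depend on $t_{r-1}$. From the formulas in Remark \ref{syt def}(\ref{blocklabel}), $m_{2j+1},n_{2j+1}$ for $j\ge k+1$ depend only on $t_{k+r-j-1}$ with index $\le r-2$. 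However, $n_{2j}=(p-1)(\alpha_{r-j}+p^{k-1}t_{k+r-j-1})+t_{r-j-1}$, and the $\alpha$'s involve $t_i$ with $i\le k+r-j-2$. For $j=k+1$ the index reaches $r-2$ only, so $t_{r-1}$ does not appear.

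I expect this bookkeeping to be the main obstacle. We must verify carefully that no block $B(i;\cdot)$ with $i\ge 2k+2$ involves $t_{r-1}$. If some block did involve it (for instance through the $\alpha_j$ or through $l_{s-1}$), we would instead have to pair paths differently. In that case we would try to modify also the lowest-index parameter affecting block $2k+2$, choosing the modification so that the relative orders of the $m_i$ are preserved.

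\textbf{Blocks below $2k+1$ and the case $k=0$.} The blocks of index $\le 2k$ have different sizes $p^{j-1}(p-1)^2$ and are never compared (Remark \ref{syt def}(1)), so changing them affects nothing. For $k=0$ the same argument applies verbatim, with the zeroth block being $\lambda(1;(p-1,t_{r-1}))$. Note that the case $r=k+1$ (the vertices in $V^{2r}_{r-1}$ with $s=1$) is covered too, since then the only statistic is the one from \ref{c1}. Summing $sgn(P)+sgn(\phi(P))=0$ over the orbits of $\phi$ gives $I_\lambda=0$.
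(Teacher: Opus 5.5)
Your proof is correct and is essentially the paper's argument. The paper fixes the number $j$ of inversions among the blocks of index at least $2k+2$, and notes that convention (c1) adds one inversion for exactly half of the $p-1$ values of $m_{2k+1}=p^{k}t$, so the signs cancel. Your involution $t_{r-1}\mapsto t_{r-1}\pm\frac{p-1}{2}$ makes that pairing explicit, and your check that no block of index $\ge 2k+2$ involves $t_{r-1}$ supplies the independence of $j$ from $t$, which the paper uses without comment.
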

\begin{proof}
	Let $P$ be a path $$\left(\lambda\big(2r;(x_{(2s,k)},x_{(s,k)}+l)\big),m_{2r-1},m_{2r-2},\ldots,m_{1},\lambda\big(1;(p-1,t_{r-1})\big)\right)$$ as defined in Remark \ref{syt def}(\ref{blocklabel}), and let $j$ denote total number of inversions of the path $P$ from block $2k+2$ to block $2r-2$. 
		
	By convention \ref{c1}, the total number of inversions of the path from block $2k$ is
	\begin{displaymath}
		\begin{cases}
			j+1, & \text{if $m_{2k+1}=p^{k}t$, $0\leq t<\frac{p-1}{2}$}; \\ 
			j, & \text{if $m_{2k+1}=p^{k}t$, $\frac{p-1}{2}\leq t\leq p-2$}.
		\end{cases}
	\end{displaymath}
	Consequently, when $j$ is odd,
	\begin{displaymath}
		(-1)^{inv(P)}=
		\begin{cases}
			1, & \text{if $m_{2k+1}=p^{k}t$, $0\leq t<\frac{p-1}{2}$}; \\ 
			-1, & \text{if $m_{2k+1}=p^{k}t$, $\frac{p-1}{2}\leq t\leq p-2$}.
		\end{cases}
	\end{displaymath}
	and when $j$ is even,
	\begin{displaymath}
		(-1)^{inv(P)}=
		\begin{cases}
			-1, & \text{if $m_{2k+1}=p^{k}t$, $0\leq t<\frac{p-1}{2}$}; \\ 
			1, & \text{if $m_{2k+1}=p^{k}t$, $\frac{p-1}{2}\leq t\leq p-2$}.
		\end{cases}
	\end{displaymath}
	Therefore, 	$$\sum_{P\in P^{2r}_{l}}sgn(P)=\sum_{P\in P^{2r}_{l}}(-1)^{inv(P)}=0.$$		
\end{proof}

	\subsection{Descents of paths}
	\begin{definition}\label{block}
		Let $k\geq 0$, and let $P$ be the path $$\left(\lambda\big(2r;(x_{(2s,k)},x_{(s,k)}+l)\big),m_{2r-1},m_{2r-2},\ldots,m_{1},\lambda\big(1;(p-1,t_{r-1})\big)\right)$$ as in Remark \ref{syt def}(\ref{blocklabel}). Also, let $B(i;(m_{i},n_{i}))$, $B(i+1;(m_{i+1},n_{i+1}))$ be the blocks added along the path $P$. We define the \textit{descent set of $P$} as follows:
		\begin{displaymath}
			Des(P)=
			\begin{cases}
				\{i:~ B(i;(m_{i},n_{i}))>B(i+1;(m_{i+1},n_{i+1}))\}, & \text{for $i\geq 2k+2$;} \\  \text{descent as in \ref{c1}and \ref{c2}}, & \text{for $i=2k$, $2k+1$}
			\end{cases}
		\end{displaymath}
		where $m_{i}+n_{i}= m_{i+1}+n_{i+1}$ for $i\geq 2k+2$ and $des(P)=|Des(P)|$. If $i\in Des(P)$, then we say that there is a descent at the $i$-th block of path $P$. Equivalently, $Des(P)$ contains descents up to the $(2r-2)$-th block of the path $P$.
	\end{definition}
	\begin{lemma}\label{2p-3}
		Let $k\geq 0$, and let $$P=\left(\lambda\big(2k+2;(x_{(2,k)},x_{(1,k)}+l)\big),m_{2k+1},m_{2k},\ldots,m_{1},\lambda\big(1;(p-1,t_{r-1})\big)\right),$$ and 
		$$P^{'}=\left(\lambda\big(2k+3;(x_{(3,k)},x_{(1,k)}+l^{'})\big),m_{2k+2},m_{2k+1},\ldots,m_{1},\lambda\big(1;(p-1,t_{r-1})\big)\right)$$ be paths in the $p$-Bratteli diagram, where $0\leq l <p^{k}$ and $0\leq l^{'} <p^{k+1}$. Then 
		\begin{displaymath}
			des(P) = \begin{cases}
				1, & \text{when $m_{2k+1}=p^{k}t,~0\leq t < \frac{p-1}{2}$;} \\ 0, & \text{when $m_{2k+1}=p^{k}t,~\frac{p-1}{2}\leq t \leq p-2$.}
			\end{cases}
		\end{displaymath}
		Moreover, $des(P^{'})=des(P)$ for the same $m_{2k+1}$.	Furthermore, 
		the total number of descents of all paths ending at the vertex $\lambda\big(2k+2;(x_{(2,k)},x_{(1,k)}+l)\big)$ \big(and at $\lambda\big(2k+3;(x_{(3,k)},x_{(1,k)}+l^{'})\big)$\big) is $\frac{p-1}{2}$.
	\end{lemma}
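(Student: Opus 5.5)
The plan is to read off the descent set of $P$ directly from Definition \ref{block} together with the conventions \ref{c1} and \ref{c2}. The path $P$ ends at the vertex $\lambda\big(2k+2;(x_{(2,k)},x_{(1,k)}+l)\big)$, so its blocks are $B(1),\ldots,B(2k+1)$.

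\textbf{Step 1: no descents below block $2k$.} By Remark \ref{syt def}(\ref{ak,bk}), the blocks $B(2j;(m_{2j},n_{2j}))$ with $j\le k$ have strictly increasing sizes $p^{j-1}(p-1)^2$. The odd blocks $B(2j-1)$ with $j\le k$ are empty. Definition \ref{block} only compares adjacent blocks of equal size with index at least $2k+2$, and by the stated convention this initial segment contributes no descent.

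\textbf{Step 2: the descent at $2k$.} The only candidate descent is therefore the one governed by \ref{c1} at position $2k$; by \ref{c2} there is none at $2k+1$. The last edge of $P$ removes $B(2k+1;(p^{k}t,p^{k}(p-2-t)))$ with $m_{2k+1}=p^kt$ and $0\le t\le p-2$, as in the edge description from floor $2k+2$ to floor $2k+1$. Convention \ref{c1} says this is a descent exactly when $t<\frac{p-1}{2}$. This gives the formula for $des(P)$.

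\textbf{Step 3: comparing $P'$ with $P$.} $P'$ has one more block, $B(2k+2;(m_{2k+2},n_{2k+2}))$, of size $p^k(p-1)$. That block is the last one, since $Des(P)$ only records positions up to block $(2r-2)$, here $2k+1$. There is no block $2k+3$ to compare it with. By \ref{c2}, blocks $2k+1$ and $2k+2$ have different sizes and are incomparable. So $Des(P')=Des(P)$ whenever $m_{2k+1}$ is the same.

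\textbf{Step 4: the total count.} I need to count paths ending at each vertex according to $t$. For the vertex on floor $2k+2$, use the edge description from floor $2k+2$ to floor $2k+1$. For each $t\in\{0,\ldots,p-2\}$ there is exactly one edge, to $\lambda\big(2k+1;(p^k(p-1),x_{(1,k)}+l-p^k(p-2-t))\big)$.

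Below floor $2k+1$, each vertex $\lambda(2k+1;\cdot)$ in $W^{2k+1}_k$ has a unique downward path to floor $1$. Going down, each odd floor has a unique projection $\bar i$ (every $i$ determines $\bar i$ uniquely), and each edge between floors $2j-1$ and $2j$ is the single empty-block edge. Since these lower blocks contribute no descents, each $t$ contributes exactly one path.

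Hence the total number of descents is $\#\{t:0\le t<\frac{p-1}{2}\}=\frac{p-1}{2}$. For the vertex on floor $2k+3$, with $l'=l+p^kt'$, the unique edge from floor $2k+3$ down to floor $2k+2$ sends $P'$ to a path $P$ ending at $\lambda(2k+2;(x_{(2,k)},x_{(1,k)}+l))$. This gives a bijection with the paths of Step 4, and Step 3 shows descents are preserved, so the total is again $\frac{p-1}{2}$.

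\textbf{Main obstacle.} The delicate point is checking in Step 4 that each admissible $t$ gives exactly one path. This requires the index $x_{(1,k)}+l-p^k(p-2-t)$ to lie in $[0,p^k(p-1))$ and the lower segment to be unique. I would verify this from $x_{(1,k)}=p^k(p-2)$ and $0\le l<p^k$: these give the index $l+p^kt$, which lies in $[p^kt,p^k(t+1))\subset[0,p^k(p-1))$, as required.
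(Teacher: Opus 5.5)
Your argument is correct and takes the same route as the paper's proof. The only possible descent is at block $2k$, governed by \ref{c1}; position $2k+1$ is excluded by \ref{c2}; block $2k+2$ of $P'$ has no successor; and the total is the number of $t<\frac{p-1}{2}$ among the $p-1$ downward edges, with your Step 4 supplying the uniqueness-of-the-lower-path details that the paper leaves to its figure.

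One small slip: the unique edge down from $\lambda\big(2k+3;(x_{(3,k)},x_{(1,k)}+l')\big)$ goes to the projection at position $\lfloor l'/p\rfloor=\alpha+p^{k-1}t'$ (where $l=\alpha p+\beta$), not to position $l$. Since your Step 4 total $\frac{p-1}{2}$ does not depend on which vertex of floor $2k+2$ is reached, the conclusion is unaffected.
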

	\begin{proof}
		 The paths ending at the vertex  $\lambda\big(2k+2;(x_{(2,k)},x_{(1,k)}+l)\big)$ are illustrated in Figure \ref{figlemma}
		 
		\begin{figure}[!ht]
			\begin{center}
				\includegraphics[width=0.6\linewidth]{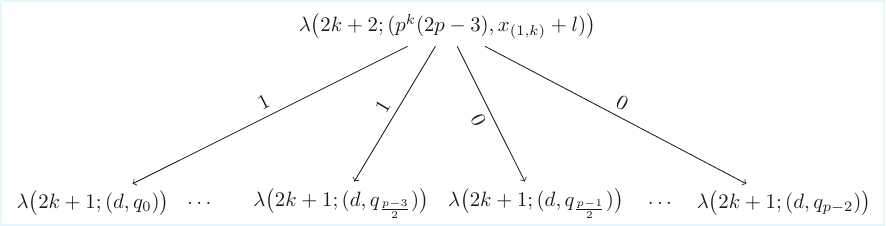}
			\end{center}
			\caption{}
			\label{figlemma}
		\end{figure}
		
		where $d=p^{k}(p-1)$ and $q_{t}= x_{(1,k)}+l-p^{k}(p-2-t)$, $0\leq t\leq p-2$.
		
		The descent at block $2k$ between the vertices follows convention \ref{c1}. Therefore, $des(P)$ is either 1 or 0.
		By Definition \ref{block}, the $des(P^{'})$ is also the same as $des(P)$, hence $des(P^{'})$ is either 1 or 0.  
		
		The total number of descents of all paths ending at the given vertex is $\frac{p-1}{2}$. Hence the proof follows.
	\end{proof}
	\begin{lemma}\label{thmlem}
		\begin{enumerate}
			\item  The blocks added to the partition $\lambda\big(2r-2;(x_{(2s,k)},x_{(s,k)}+l)\big)$ to obtain the partition $\lambda\big(2r-1;(x_{(2s+1,k)},x_{(s,k)}+l^{'})\big)$ satisfy the following inequality 	
			$$B(2r-2;(p^{k}(p-1)-l_{1},l_{1}))>B(2r-2;(p^{k}(p-1)-l_{2},l_{2})),~\forall~0\leq l_{1}<l_{2}\leq p^{k}(p-1)$$
			where $0\leq l < p^{k}$, $0\leq l^{'}<p^{k+1}$, for all integers $s\geq 1,~k\ge 0$, and where $x_{(a,k)}$ is as in Notation \ref{not}(\ref{x_{a,k}}).
			\item The blocks added to the partition $\lambda\big(2r-1;(x_{(2s-1,k)},x_{(s-1,k)}+l^{'})\big)$ to obtain the partition $\lambda\big(2r;(x_{(2s,k)},x_{(s,k)}+l)\big)$ satisfy the following inequality
			$$B(2r-1;(p^{k}t_{1},p^{k}(p-1-t_{1})))<B(2r-1;(p^{k}t_{2},p^{k}(p-1-t_{2}))),~\forall~0\leq t_{1}<t_{2}\leq p-1.$$
			where $0\leq l < p^{k}$, $0\leq l^{'} < p^{k+1}$, for all $s\geq 2$ and $x_{(a,k)}$ is as in Notation \ref{not}(\ref{x_{a,k}}).
		\end{enumerate}
	\end{lemma}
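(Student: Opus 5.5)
This lemma is essentially a direct unwinding of Definition \ref{block1} applied to the explicit edge descriptions of Subsection \ref{edge}. I will treat the two parts separately.

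For part (1), I first identify the blocks that can be added along an edge from the $(2r-2)$-th floor to the $(2r-1)$-th floor. From the edge set, adding $B(2r-2;(a_t,b_t))$ with $a_t=p^{k}(p-1)-((p-1)l+t)$ and $b_t=(p-1)l+t$ sends $\lambda\big(2r-2;(x_{(2s,k)},x_{(s,k)}+l)\big)$ to the vertex with vertical parameter $x_{(s,k)}+pl+t$. As $l$ ranges over $[0,\ldots,p^{k}-1]$ and $t$ over $[0,\ldots,p-1]$, every such block has the form $B(2r-2;(p^{k}(p-1)-l_1,l_1))$, with constant total size $p^{k}(p-1)$.

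Now take two such blocks with $l_1<l_2$. Their sizes agree, so the hypothesis $m_i+n_i=m_j+n_j$ of Definition \ref{block1} holds. Moreover $p^{k}(p-1)-l_1>p^{k}(p-1)-l_2$ and $l_1<l_2$. These are exactly the two inequalities $m>m'$ and $n<n'$ required in Definition \ref{block1}, so
\[
B(2r-2;(p^{k}(p-1)-l_1,l_1))>B(2r-2;(p^{k}(p-1)-l_2,l_2)).
\]
I should also check that the block index $2r-2$ satisfies $2r-2\ge 2k+2$, which holds since $s\ge1$. This ensures Definition \ref{block1} applies rather than the convention \ref{c1}.

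For part (2), the edges from $\lambda\big(2r-1;(x_{(2s-1,k)},x_{(s-1,k)}+l+p^{k}t)\big)$ to $\lambda\big(2r;(x_{(2s,k)},x_{(s,k)}+l)\big)$ add $B(2r-1;(p^{k}t,p^{k}(p-1-t)))$, and each of these blocks has size $p^{k}(p-1)$. For $t_1<t_2$ we get $p^{k}t_1<p^{k}t_2$ and $p^{k}(p-1-t_1)>p^{k}(p-1-t_2)$. By Definition \ref{block1}, read with the roles of the two blocks reversed, this gives the stated inequality $<$.

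The only care needed is the index range. Here $2r-1\ge 2k+3$ because $s\ge2$, which keeps us away from the exceptional sizes $p^{k}(p-2)$ of \ref{c2}. The main point to verify is that the general $l_1,l_2\le p^{k}(p-1)$ in part (1) are exactly the values $(p-1)l+t$ arising from the edges; beyond that bookkeeping there is no real obstacle.
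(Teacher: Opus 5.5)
Your proposal is correct and follows the paper's argument. Both reduce to the coordinatewise comparison of Definition \ref{block1} for blocks of equal size $p^{k}(p-1)$, and part (2) follows in the same way with the order reversed. Your version is a little more careful than the paper's, which only compares first coordinates and does not check the equal-size hypothesis or the index ranges $2r-2\geq 2k+2$ and $2r-1\geq 2k+3$.
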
	
	\begin{proof}
\begin{enumerate}
  \item Since $l_{1}<l_{2}$, $p^{k}(p-1)-l_{1}>p^{k}(p-1)-l_{2}$.   
  Thus the first coordinate of the first block is larger than the first coordinate of the second block.
By definition \ref{block}, blocks are ordered by their first coordinate. 
Hence the first block is greater than the second block. 
\item The proof is analogous to that of part (1).
\end{enumerate}
	\end{proof}
	The following lemma and theorem will be useful in determining the descents for every path in the $p$-Bratteli diagram. 

We consider only the vertices of the floors $2r,~2r-1$, and $2r-2$ for $r\ge 2$.
	\begin{lemma}\label{rules jt}
		For $k\geq 1$ and $0\leq t\leq p-1$, let $P^{2r}_{l}$ denote the paths $$\left(\lambda\big(2r;(x_{(2s,k)},x_{(s,k)}+l)\big),m_{2r-1},\ldots,m_{1},\lambda\big(1;(p-1,t_{r-1})\big)\right),$$ as defined in Remark~\ref{syt def}(\ref{blocklabel}), for $1\le j\le k$, where
		$m_{2d+1}=p^{k}t$, and $m_{2d}=p^{k}(p-1-t)$, for $k+1\leq d\leq r-1$.
		Then the following conditions hold for the path $P^{2r}_{l}$
		\begin{itemize}
			\item If $0\leq t <\frac{p-1}{2}$, then there is a descent at the $(2r-2)$-nd block and no descent at the $(2r-3)$-rd block of the paths $P^{2r}_{l}$.
			\item If $ t =\frac{p-1}{2}$, then there is no descent at the blocks $2r-2$ and $2r-3$ for every $s$, of the paths $P^{2r}_{l}$.
			\item If $\frac{p-1}{2}<t\leq p-1$, then there is no descent at the $(2r-2)$-nd block and there is a descent at the $(2r-3)$-rd block of the paths $P^{2r}_{l}$. 
		\end{itemize} 
		Here $l=j^{k}_{t}$.
	\end{lemma}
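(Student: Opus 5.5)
We work directly from the explicit block formulas in Remark \ref{syt def}(\ref{blocklabel}), specialized to $l=j^{k}_{t}=t\sum_{i=0}^{k-1}p^{i}$, so that $t_{0}=t_{1}=\cdots=t_{k-1}=t$. The descents at blocks $2r-2$ and $2r-3$ are decided by Definition \ref{block}. This means comparing $B(2r-2;(m_{2r-2},n_{2r-2}))$ with $B(2r-1;(m_{2r-1},n_{2r-1}))$, and $B(2r-3;\cdot)$ with $B(2r-2;\cdot)$. Here we use the convention that a descent at $i$ means $m_{i}>m_{i+1}$ and $n_{i}<n_{i+1}$ with equal sizes.

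Both blocks $2r-1$ and $2r-2$ have size $p^{k}(p-1)$, so they are comparable. By hypothesis $m_{2r-1}=p^{k}t$ and $m_{2r-2}=p^{k}(p-1-t)$, hence $n_{2r-2}=p^{k}t$ and $n_{2r-1}=p^{k}(p-1-t)$. Therefore $B(2r-2)>B(2r-1)$ if and only if $p-1-t>t$, that is, $t<\frac{p-1}{2}$. For $t=\frac{p-1}{2}$ the two blocks coincide, so there is no descent. For $t>\frac{p-1}{2}$ the inequalities go the other way, so again there is no descent at $2r-2$.

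For block $2r-3$, if $s\geq 2$ the block $B(2r-3)$ is again of the type in Lemma \ref{thmlem}(2), with $m_{2r-3}=p^{k}t'$ and $n_{2r-3}=p^{k}(p-1-t')$. By the hypothesis for $d=r-2$ (whenever $r-2\geq k+1$), we have $t'=t$. So $B(2r-3)$ equals $B(2r-1)$, and the comparison with $B(2r-2)$ is the mirror of the previous one. A descent at $2r-3$ requires $m_{2r-3}>m_{2r-2}$, i.e.\ $t>p-1-t$, i.e.\ $t>\frac{p-1}{2}$. Equality at $t=\frac{p-1}{2}$ again gives no descent.

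In the boundary case $s=1$ (so $2r-3=2k+1$), block $2r-3$ is incomparable with its neighbour by convention \ref{c2}. There we must check the statement via Lemma \ref{2p-3} and the convention, or note that the $d$-range forces $s\ge 2$.

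The main obstacle is bookkeeping: confirming that the hypothesis $m_{2d+1}=p^{k}t$, $m_{2d}=p^{k}(p-1-t)$ is consistent with the formula $m_{2j}=p^{k}(p-1)-n_{2j-1}$ and $n_{2j}=(p-1)(\alpha_{r-j}+p^{k-1}t_{k+r-j-1})+t_{r-j-1}$ from Remark \ref{syt def}. Concretely, with all relevant $t_{i}=t$, we need $\alpha_{r-j}=j^{k-1}_{t}$, so that $n_{2j}=(p-1)t\frac{p^{k}-1}{p-1}+t=p^{k}t$. We will verify this by computing $\alpha_{j}=\sum_{i=j}^{k+j-2}p^{i-j}t_{i}=t\sum_{i=0}^{k-2}p^{i}$, a geometric-sum identity. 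Once this is checked, the three cases follow from the sign of $p-1-2t$ as above.
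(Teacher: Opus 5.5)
Your main argument is the paper's proof. With the equal-size blocks $m_{2r-1}=p^{k}t$, $m_{2r-2}=p^{k}(p-1-t)$ and $m_{2r-3}=p^{k}t$, the three cases are read off from the sign of $p-1-2t$, exactly as you do. Your consistency computation ($\alpha_{r-j}=t\sum_{i=0}^{k-2}p^{i}$, hence $n_{2j}=p^{k}t$) is correct, but it is not the ``main obstacle''. The lemma takes these $m$-values as hypotheses, so the check only shows that the special paths exist. It also shows that the condition on $m_{2d}$ already follows from the one on $m_{2d+1}$ together with $l=j^{k}_{t}$.

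Where the proposal goes wrong is the boundary case.
\begin{enumerate}
\item The value $m_{2r-3}=p^{k}t$ comes from the hypothesis at $d=r-2$. That requires $r-2\geq k+1$, i.e.\ $s\geq 3$. So your sentence ``if $s\geq 2$ the block $B(2r-3)$ is again of the type in Lemma~\ref{thmlem}(2)'' is false for $s=2$.
\item The case in which $2r-3=2k+1$ is $s=2$, not $s=1$. For $s=1$ one has $2r-3=2k-1$, and the range $k+1\leq d\leq r-1$ is empty.
\item The case $s=2$ cannot be rescued by ``checking via Lemma~\ref{2p-3} and the convention''. Convention (c2) says there is never a descent at block $2k+1$. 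So the first two bullets still hold when $s=2$, but the third bullet is actually false there.
\item Appealing to ``the $d$-range forces $s\geq 2$'' does not help either, since $s=2$ is exactly the bad case.
\end{enumerate}

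The right fix is to state explicitly that the assertions about block $2r-3$ require $s\geq 3$. The paper's one-line proof tacitly assumes this when it uses $m_{2r-3}=p^{k}t$. This restriction also matches Definition~\ref{Fibo}, whose $s=2$ formulas contain no $(2r-3)$-descent term.
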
	
	\begin{proof}
		\begin{itemize}
			\item When $0\leq t <\frac{p-1}{2}$, we have $m_{2r-2}>m_{2r-1} ~\text{and} ~m_{2r-3}<m_{2r-2}$. 
			Hence, the paths $P^{2r}_{l}$ have a descent at the $(2r-2)$-nd block and no descent at the $(2r-3)$-rd block. 
			\item When $t=\frac{p-1}{2}$, we have   $m_{2r-2}=m_{2r-1} ~\text{and} ~m_{2r-3}=m_{2r-2}$.			
			Hence, the paths $P^{2r}_{l}$ have no descent at the blocks $2r-2$ and $2r-3$.
			\item When $\frac{p-1}{2}<t\leq p-1$, we have $
				m_{2r-2}<m_{2r-1} ~\text{and} ~m_{2r-3}>m_{2r-2}$.
			Hence, the paths $P^{2r}_{l}$ have no descent at the $(2r-2)$-nd block and have a descent $(2r-3)$-rd block.
		\end{itemize}
	\end{proof} 
	The above paths are \textbf{special paths} of the $p$-Bratteli diagram and help determine the descents of any paths.
	
	For $k=0$ and $r\geq 2$, let $P^{2r}_{0,t^{'},t^{''}}$ denote any path in the $p$-Bratteli diagram starts at $\lambda\big(2r;(x_{(2r,0)},x_{(r,0)})\big)$ and ends at $\lambda\big(1;(p-1,t_{r-1})\big)$, for some $0\leq t_{r-1}\leq p-2$, obtained by removing the blocks as described in Remark \ref{syt def}(\ref{blocklabel}). 

We write this path as  $$(\lambda\big(2r;(x_{(2r,0)},x_{(r,0)})\big),t^{'},p-t^{'}-1,t^{''},m_{2r-4},\ldots,m_{1},\lambda\big(1;(p-1,t_{r-1})\big) ).$$ That is,
	$m_{2r-3}=t^{''}$, $m_{2r-2}=p-1-t^{'}$ and $m_{2r-1}=t^{'}$, where $0\leq t^{'},t^{''}\leq p-1$. 
	
	For $k\geq 1$ and $r\geq3$ with $s=r-k\geq 2$, let $P^{2r}_{l,t^{'},t^{''}}$ be any path in the $p$-Bratteli diagram that starts at $\lambda\big(2r;(x_{(2s,k)},x_{(s,k)}+l)\big)$ and ends at $\lambda\big(1;(p-1,t_{r-1})\big)$, for some $0\leq t_{r-1}\leq p-2$, obtained by removing the blocks as described in Remark \ref{syt def}(\ref{blocklabel}). 

Such a path is denoted by
	\begin{multline*}
		\Big(\lambda\big(2r;(x_{(2s,k)},x_{(s,k)}+l)\big),p^{k}t^{'},p^{k}(p-1)- (\alpha+p^{k-1}t^{'})(p-1)+\beta,p^{k}t^{''},m_{2r-4},\ldots,m_{1},  \\  \lambda\big(1;(p-1,t_{r-1})\big)\Big)
	\end{multline*}
	 that is
	\begin{itemize}
		\item $m_{2r-1}=p^{k}t^{'}$
		\item $m_{2r-2}=p^{k}(p-1)- (\alpha+p^{k-1}t^{'})(p-1)+\beta$
		\item $m_{2r-3}=p^{k}t^{''}$.
	\end{itemize}
	Here $l=\alpha p+\beta$, $0\leq \alpha<p^{k-1}$, and $0\leq \beta,t^{'},t^{''}\leq p-1$. The notation $x_{(a,k)}$ is as defined in Notation \ref{not}(\ref{x_{a,k}}). 
	
	A few paths across three floors starting at the vertex $\lambda\big(2r;(x_{(2s,k)},x_{(s,k)}+l)\big)$ on the $2r$-th floor, are illustrated in Figure \ref{thmfigure}
	\begin{figure}[!ht]
		\centering
		\includegraphics[width=12cm,height=9cm]{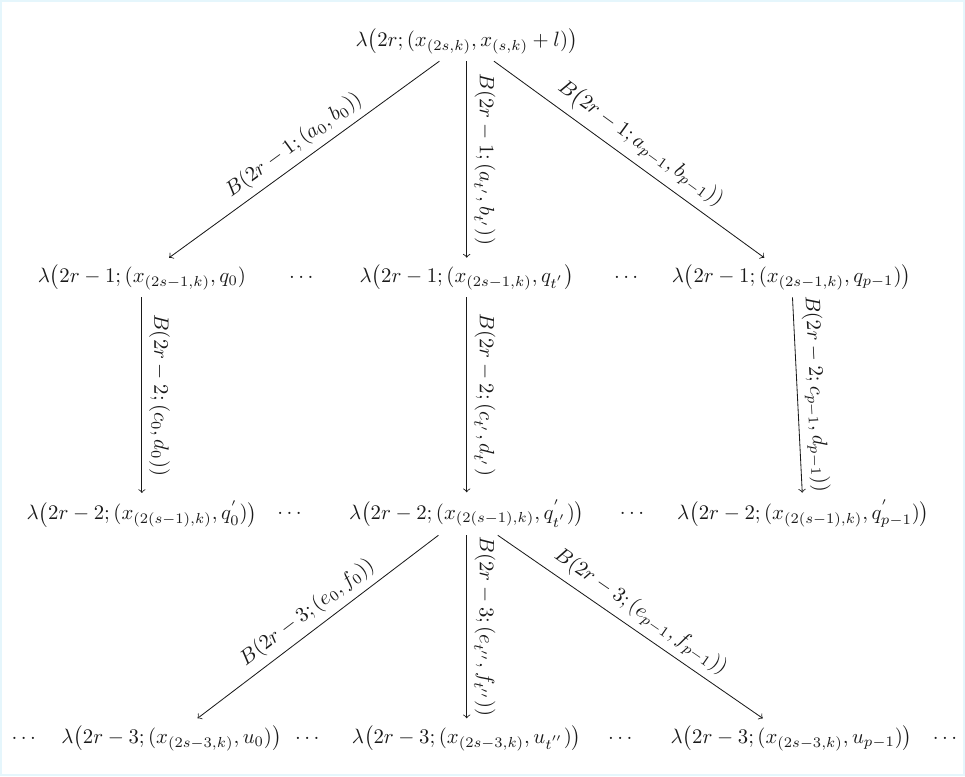}
		\caption{}
		\label{thmfigure}
	\end{figure}  
	
	For $0\leq t^{'}\leq p-1$,
	\begin{align*}
		q_{t^{'}}&=x_{(s-1,k)}+(l+p^{k}t^{'}), \\ q^{'}_{t^{'}}&=x_{(s-1,k)}+(\alpha+p^{k-1}t^{'}), \\ a_{t^{'}}&=p^{k}t^{'},~ b_{t^{'}}=p^{k}(p-1-t^{'}),\\ c_{t^{'}}&=p^{k}(p-1)-(\alpha+p^{k-1}t^{'})(p-1)+\beta,\\ d_{t^{'}}&=(\alpha+p^{k-1}t^{'})(p-1)+\beta,
	\end{align*}
	and for $0\leq t^{''}\leq p-1$,
	\begin{align*}
		u_{t^{''}}&=x_{(s-2,k)}+(\alpha+p^{k-1}t^{'}+p^{k}t^{''}) \\   e_{t^{''}}&=p^{k}t^{''},~f_{t^{''}}=p^{k}(p-1-t^{''}).
			\end{align*}
	   	\begin{definition}\label{comp}
		For $k\geq 1$, the vertices $\lambda\big(2r-2;(x_{(2(s-1),k)},x_{(s-1,k)}+(\alpha+p^{k-1}t^{'}))\big)$, $0\leq t^{'}\leq p-1$ are called the \textit{components} of the vertex $\lambda\big(2r;(x_{(2s,k)},x_{(s,k)}+l)\big)$, where the position of the components are precisely $\alpha+p^{k-1}t^{'}$.
	\end{definition}
	\begin{theorem}\label{rules l}
		For $r\geq2$, let $P^{2r}_{l,t^{'},t^{''}}$ denote the paths defined above. Then the following hold:
		\begin{enumerate}
			\item If $l<\frac{p^{k}-1}{2}$ and $k\neq0$, the paths $P^{2r}_{l,t^{'},t^{''}}$ have a descent at block $2r-2$ only for  $0\leq t^{'}\leq\frac{p-1}{2}$ and $0\leq t^{''}\leq p-1$.    
			\item If $l\geq \frac{p^{k}-1}{2}$ and $k\neq0$, the paths $P^{2r}_{l,t^{'},t^{''}}$ have a descent at block $2r-2$ only for $0\leq t^{'}\leq \frac{p-3}{2}$ and $0\leq t^{''}\leq p-1$. 
			\item If $j^{k}_{t-1}< l\leq j^{k}_{t},~k\neq0$, and $1\leq t\leq p-1$, the paths $P^{2r}_{l,t^{'},t^{''}}$ have a descent at block $2r-3$ only for $0\leq t^{'}\leq t-1$ and $p-1-t^{'}\leq t^{''}\leq p-1$. 
			\item If $j^{k}_{t-1}< l\leq j^{k}_{t},~k\neq0$, and $1\leq t\leq p-1$, the paths $P^{2r}_{l,t^{'},t^{''}}$ have a descent at block $2r-3$ only for $t\leq t^{'}\leq p-1$ and $p-t^{'}\leq t^{''}\leq p-1$.
			\item If $ l=j^{k}_{0}$ and $k\neq0$, the paths $P^{2r}_{l,t^{'},t^{''}}$ have a descent at block $2r-3$ only for $1\leq t^{'}\leq p-1$ and $p-t^{'}\leq t^{''}\leq p-1$. 

For $t^{'}=0$, the paths have no descent at block $2r-3$ for every $0\leq t^{''}\leq p-1$. 	
			\item If $k=0$ and $0\leq t^{'} \leq \frac{p-3}{2}$, the paths $P^{2r}_{0,t^{'},t^{''}}$ have a descent at block $2r-2$.
			\item If $k=0$ and $1\leq t^{'}\leq p-1$, the paths $P^{2r}_{0,t^{'},t^{''}}$ have a descent at block $2r-3$ only when $p-t^{'}\leq t^{''}\leq p-1$. 

When $t^{'}=0$, the paths have no descent at block $2r-3$.\label{bb}
			\end{enumerate} 
	\end{theorem}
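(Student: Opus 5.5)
We will prove the theorem by writing down the three blocks $B(2r-1;\cdot)$, $B(2r-2;\cdot)$, $B(2r-3;\cdot)$ removed along $P^{2r}_{l,t',t''}$ explicitly, and then comparing consecutive blocks as in Definition \ref{block}. Since $m_i+n_i$ is constant on a block size, $B(i)>B(i+1)$ reduces to the single inequality $m_i>m_{i+1}$, as in the proof of Lemma \ref{thmlem}. From the edge description in Section \ref{edge} (and the labels in Figure \ref{thmfigure}) we have
\[
m_{2r-1}=p^{k}t', \qquad m_{2r-2}=c_{t'}, \qquad m_{2r-3}=p^{k}t'' .
\]
Here $c_{t'}=p^{k}(p-1)-\big((\alpha+p^{k-1}t')(p-1)+\beta\big)$, since the vertical part is $d_{t'}$ and the block sum is $p^{k}(p-1)$.

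We read descents in the convention of Lemma \ref{rules jt}. There, a descent at block $2r-2$ corresponds to $m_{2r-2}>m_{2r-1}$, and a descent at block $2r-3$ corresponds to $m_{2r-3}>m_{2r-2}$. So the theorem reduces to two integer inequalities in the parameters $\alpha,\beta,t',t''$, with $l=\alpha p+\beta$:
\[
\text{(A)}\quad p^{k}(p-1)-(p-1)(\alpha+p^{k-1}t')-\beta>p^{k}t', \qquad
\text{(B)}\quad p^{k}t''>p^{k}(p-1)-(p-1)(\alpha+p^{k-1}t')-\beta .
\]

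For parts (1) and (2), I rewrite (A) as $(p-1)\alpha+\beta<p^{k-1}\big(p(p-1)-(2p-1)t'\big)$, up to rearrangement. Since $(p-1)\alpha+\beta$ lies between $l-\alpha$ and $l$, comparing with $l$ is essentially comparing $l$ with $p^{k}(p-1-2t')$, using $p^{k}(p-1)/(p-1)$ scaling. The threshold values of $t'$ should be $t'=\frac{p-1}{2}$ and $\frac{p-3}{2}$. The dividing line $l=\frac{p^{k}-1}{2}=j^{k}_{(p-1)/2}$ is exactly the position of the special path of Lemma \ref{rules jt} with $t=\frac{p-1}{2}$. I will use that lemma as an anchor: for $l=j^{k}_t$ the comparison is already known. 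Then monotonicity in $l$ (Lemma \ref{thmlem}(1): increasing $l$ decreases $c_{t'}$) transfers the conclusion to all $l$ on either side of the threshold.

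Parts (3)–(5) are handled similarly from (B). For fixed $t'$, the condition becomes $t''\ge$ an explicit bound. Dividing by $p^{k}$, and using that $j^{k}_{t-1}<l\le j^{k}_t$ pins down the integer part of $\big((p-1)(\alpha+p^{k-1}t')+\beta\big)/p^{k}$, the bound becomes $t''\ge p-1-t'$ or $t''\ge p-t'$, according to whether $t'<t$ or $t'\ge t$. Again the special paths $l=j^{k}_t$ of Lemma \ref{rules jt} fix the boundary cases. The case $l=j^{k}_0=0$ with $t'=0$ gives $c_0=p^{k}(p-1)\ge p^{k}t''$, so there is no descent. Parts (6) and (7) are the case $k=0$, where $l=\alpha=\beta=0$ and the blocks are $t'$, $p-1-t'$, $t''$. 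There (A) reads $p-1-t'>t'$, i.e.\ $t'\le\frac{p-3}{2}$, and (B) reads $t''>p-1-t'$, which are immediate.

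The main obstacle is the floor/integer-part bookkeeping in (3)–(4). I must show that $(p-1)(\alpha+p^{k-1}t')+\beta$, with $\alpha=\lfloor l/p\rfloor$, lies in the right interval between multiples of $p^{k}$ exactly when $j^{k}_{t-1}<l\le j^{k}_t$. I would first verify this at the endpoints $l=j^{k}_t$, using $j^{k}_t=t(p^{k}-1)/(p-1)$ so that $\alpha=j^{k-1}_t$ and $\beta=t$. Then I would interpolate by monotonicity in $l$.
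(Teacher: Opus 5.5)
Your reduction to the inequalities (A) and (B) is correct, and you rightly fix the sign of $\beta$ in $c_{t'}$, which the paper misprints. For (3)--(7) your plan is in substance the paper's argument. The paper sandwiches the floor-$(2r-1)$ position $l+p^kt'$ between special positions $j^{k+1}_{u}$, whose $(2r-2)$-blocks are $B(2r-2;(p^k(p-1-u),p^ku))$. That is exactly your bookkeeping of $\lfloor D/p^k\rfloor$ for $D=(p-1)(\alpha+p^{k-1}t')+\beta$.

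\textbf{The first gap is in (1)--(2).} Your heuristic that (A) amounts to comparing $l$ with $p^k(p-1-2t')$ fails in the decisive column. At $t'=\frac{p-1}{2}$ it predicts no descent for any $l$, whereas (A) there reads $l-\lfloor l/p\rfloor<\frac{p-1}{2}p^{k-1}$. Lemma~\ref{rules jt} cannot carry the rest either. It only covers the diagonal points $(l,t')=(j^k_t,t)$, and monotonicity in $l$ moves each such point in one direction only. So nothing follows about $t'=\frac{p-3}{2}$ for $l>j^k_{(p-3)/2}$, or about $t'=\frac{p+1}{2}$ for $l<j^k_{(p+1)/2}$.

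The missing step is monotonicity in $t'$. The left side $l-\alpha$ of (A) is independent of $t'$, while the right side $p^{k-1}\bigl(p(p-1)-(2p-1)t'\bigr)$ decreases in $t'$. Hence descents at block $2r-2$ form an initial segment in $t'$. At $t'=\frac{p+1}{2}$ the right side is $p^{k-1}\frac{1-3p}{2}<0$. At $t'=\frac{p-3}{2}$ it is $p^{k-1}\frac{5p-3}{2}>p^{k-1}(p-1)\ge l-\alpha$. So only the column $t'=\frac{p-1}{2}$ depends on $l$.

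\textbf{The second gap is that monotonicity in $l$ is only weak.} $D$ depends on $l$ through $f(l)=l-\lfloor l/p\rfloor$, which does not change from $ap-1$ to $ap$. Interpolating from endpoint values therefore gives only non-strict inequalities, yet strictness is needed exactly at the boundary cases:
\begin{itemize}
\item In (1) at $t'=\frac{p-1}{2}$ you need $f(l)<\frac{p-1}{2}p^{k-1}=f\bigl(\frac{p^k-1}{2}\bigr)$ for all $l<\frac{p^k-1}{2}$. Equality would mean $m_{2r-2}=m_{2r-1}$, i.e.\ no descent.
\item In (3) with $t'=t-1$ you need $D>p^kt'$ already at $l=j^k_{t-1}+1$. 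Equality would give the bound of (4) instead.
\end{itemize}
Both follow from $f(j^k_u)=up^{k-1}$ and $j^k_u\equiv u\pmod p$. For $1\le u\le p-1$, neither $j^k_u$ nor $j^k_{u-1}+1$ is divisible by $p$, so $f$ jumps by one at each of these points.

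With these two additions your explicit route is complete. It is in fact tighter than the paper's proof, which has the same blind spots. The paper invokes the strict ordering of Lemma~\ref{thmlem} without noting that the vertical part is only weakly monotone in the position, and it never proves the ``only'' direction of (1)--(2).
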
	
	\begin{proof}
		\begin{enumerate}
			\item  When $l<\frac{p^{k}-1}{2}$, we have $l+p^{k}t^{'}<\frac{p^{k+1}-1}{2}$ for $0\leq t^{'}\leq \frac{p-1}{2};$ that is,
			\begin{multline*}
				\lambda\big(2r-1;(x_{(2s-1,k)},x_{(s-1,k)}+(l+p^{k}t^{'}))\big)\\ \trianglerighteq\lambda\Bigg(2r-1;\Big(x_{(2s-1,k)},x_{(s-1,k)}+\Big(\frac{p^{k+1}-1}{2}\Big)\Big)\Bigg)
			\end{multline*}
			By Lemma \ref{thmlem}, we have $$m_{2r-2}=p^{k}(p-1)-(\alpha+p^{k-1}t^{'})(p-1)+\beta>\frac{p^{k}(p-1)}{2}\geq p^{k}t^{'}=m_{2r-1},$$ for $0\leq t^{'}\leq \frac{p-1}{2}$. 
			
			Therefore, the paths $P^{2r}_{l,t^{'},t^{''}}$ have a descent at the block $2r-2$ only for $0\leq t^{'}\leq \frac{p-1}{2}$.
			\item  When $l\geq\frac{p^{k}-1}{2}$, we have $l+p^{k}t^{'}<\frac{p^{k+1}-1}{2}$ for $0\leq t^{'}< \frac{p-1}{2}$ (i.e) $$\lambda\big(2r-1;(x_{(2s-1,k)},x_{(s-1,k)}+(l+p^{k}t^{'}))\big)\trianglerighteq\lambda\Bigg(2r-1;\Big(x_{(2s,k)},x_{(s-1,k)}+\Big(\frac{p^{k+1}-1}{2}\Big)\Big)\Bigg)$$
			By Lemma \ref{thmlem}, we have $$m_{2r-2}=p^{k}(p-1)-(\alpha+p^{k-1}t^{'})(p-1)+\beta>\frac{p^{k}(p-1)}{2}\geq p^{k}t^{'}=m_{2r-1},$$ when $0\leq t^{'}\leq \frac{p-3}{2}$. 
			
			Therefore, the paths $P^{2r}_{l,t^{'},t^{''}}$ have a descent at the block $2r-2$ only for $0\leq t^{'}\leq \frac{p-3}{2}$.
			
			For $j^{k}_{t-1}< l\leq j^{k}_{t},~1\leq t \leq p-1$ and $j^{k+1}_{t}$ in the floor $2r-1$, the following relations hold:  
			
		 When $0\leq t^{'}\leq t-1$, 
			\begin{align}\label{l1}
				j^{k}_{t}+p^{k}t^{'}&< l+p^{k}t^{'}\leq j^{k}_{t+1}+p^{k}t^{'} \notag \\ j^{k+1}_{t^{'}}&<l+p^{k}t^{'}<j^{k+1}_{t^{'}+1}
			\end{align}
		 When $t\leq t^{'}\leq p-1$, 
			\begin{align}\label{l4}
				j^{k}_{t}+p^{k}t^{'}&< l+p^{k}t^{'}\leq j^{k}_{t+1}+p^{k}t^{'} \notag \\j^{k+1}_{t^{'}-1}&<l+p^{k}t^{'}\leq j^{k+1}_{t^{'}}
			\end{align}
			Note that $$j^{k+1}_{t-1}<l+p^{k}(t-1),~l+p^{k}t\leq j^{k+1}_{t}.$$ Since $l\leq j^{k}_{t}$, it follows that $l+p^{k}t\leq j^{k}_{t}+p^{k}t=j^{k+1}_{t}$. 
			
			\item  When $j^{k}_{t-1}<l\leq j^{k}_{t}$ and $0\leq t^{'}\leq t-1$, equation \eqref{l1} and Lemma \ref{thmlem} imply that
			\begin{equation}\label{l7}
				\begin{aligned}
					B(2r-2;(p^{k}(p-1-t^{'}),p^{k}t^{'}))&>B(2r-2;(c_{t^{'}},d_{t^{'}})) \\ &>B(2r-2;(p^{k}(p-2-t^{'}),p^{k}(t^{'}+1)))
				\end{aligned}
			\end{equation}
			\begin{equation*}
				\begin{aligned}
					B(2r-3;(p^{k}t^{''},p^{k}(p-1-t^{''}))) &\geq B(2r-2;(p^{k}(p-1-t^{'}),p^{k}t^{'}))\\&>B(2r-2;(c_{t^{'}},d_{t^{'}}))   
				\end{aligned}
			\end{equation*}
			The above inequality holds when $t^{''}=p-1,~p-2,~\ldots,p-1-t^{'}$. 
			
			Therefore, for such values of $t^{'}$ and $t^{''}$, the paths $P^{2r}_{l,t^{'},t^{''}}$ have a descent at the block  $2r-3$. 
			
			\item    When $j^{k}_{t-1}<l\leq j^{k}_{t}$ and $t\leq t^{'}\leq p-1$, equation \eqref{l4} and Lemma \ref{thmlem} imply that 
			\begin{equation}\label{l9}
				\begin{aligned}
						B(2r-2;(p^{k}(p-t^{'}),p^{k}(t^{'}-1)))&>B(2r-2;(c_{t^{'}},d_{t^{'}}))\\&>B(2r-2;(p^{k}(p-1-t^{'}),p^{k}t^{'}))
				\end{aligned}
			\end{equation}
			\begin{equation*}
					\begin{aligned}
					B(2r-3;(p^{k}t^{''},p^{k}(p-1-t^{''}))) &\geq B(2r-2;(p^{k}(p-t^{'}),p^{k}(t^{'}-1)))\\&>B(2r-2;(c_{t^{'}},d_{t^{'}}))		
				\end{aligned}			
			\end{equation*}
			The above inequality holds when $t^{''}=p-1,~p-2,~\ldots,p-t^{'}$. 
			
			Therefore, for such values of $t^{'}$ and $t^{''}$, the paths $P^{2r}_{l,t^{'},t^{''}}$ have a descent at the block $2r-3$. 
			\item When $l=j^{k}_{0}$, and $t^{'}=0$, we have $l+p^{k}t^{'}=j^{k+1}_{0}$ and the corresponding $(2r-2)$-th block added to this vertex is $B(2r-2;(p^{k}(p-1),0))$. No block of the form satisfies $B(2r-3;(p^{k}t^{''},p^{k}(p-1-t^{''})))$ satisfies $$B(2r-3;(p^{k}t^{''},p^{k}(p-1-t^{''})))>B(2r-2;(p^{k}(p-1),0)),~\text{for all}~0\leq t^{''}\leq p-1.$$ Hence, the paths $P^{2r}_{l,t^{'},t^{''}}$ have no descent at the block $2r-3$.
			
			When $l=j^{k}_{0}$, and $1\leq t^{'}\leq p-1$, equations (\ref{l4}), (\ref{l9}) with the substitutions $\alpha=0$, $\beta=0$, yields		
			\begin{equation*}
				\begin{aligned}
					B(2r-3;(p^{k}t^{''},p^{k}(p-1-t^{''}))) &\geq
					B(2r-2;(p^{k}(p-t^{'}),p^{k}(t^{'}-1)))\\ &\hspace*{-0.8cm}>B(2r-2;(p^{k}(p-1)-p^{k-1}t^{'}(p-1),p^{k-1}t^{'}(p-1)))		
				\end{aligned}
			\end{equation*}			
			The above inequality holds when $t^{''}=p-1,~p-2,~\ldots,p-t^{'}$. 
			
			Therefore, for such values of $t^{'}$ and $t^{''}$, the paths $P^{2r}_{l,t^{'},t^{''}}$ have a descent at the block $2r-3$. 
			\item When $0\leq t^{'} \leq \frac{p-3}{2}$, we have 
			$$m_{2r-2}=p-1-t^{'}>t^{'}=m_{2r-1}.$$ Hence, the paths $P^{2r}_{0,t^{'},t^{''}}$ have a descent at the block $2r-2$.			\item When $1\leq t^{'}\leq p-1$, we have 
			$$m_{2r-3}=t^{''}>p-1-t^{'}=m_{2r-2}$$
			only for $p-t^{'}\leq  t^{''}\leq p-1$. Therefore, for such values of $t^{'}$ and $t^{''}$, the paths $P^{2r}_{0,t^{'},t^{''}}$ have a descent at the block $2r-3$.
			
			When $t^{'}=0$, we have $$m_{2r-2}=p-1\geq t^{''}=m_{2r-3},~0\leq t^{''}\leq p-1.$$ Hence, the paths $P^{2r}_{0,t^{'},t^{''}}$ have no descent at the block $2r-3$.
		\end{enumerate}		
	\end{proof}
	
	\begin{corollary}\label{cor rules l}
		Let the paths be as in Theorem~\ref{rules l}. Then:
		\begin{enumerate}
			\item The total number of descents at the $(2r-2)$-th block of all paths ending at the vertex $\lambda\big(2r;(x_{(2s,k)},x_{(s,k)}+l)\big)$ is
			\begin{align*}
				&p^{s-1}(p-1)\left(\frac{p+1}{2}\right)\quad \text{when}\quad 0\leq l<\frac{p^{k}-1}{2},~k\geq 1, \\  &p^{s-1}(p-1)\left(\frac{p-1}{2}\right)\quad \text{when} \quad\frac{p^{k}-1}{2}\leq l <p^{k},~k\geq 1, \\
				&p^{s-1}(p-1)\left(\frac{p-1}{2}\right)\quad \text{when} \quad k=0.
			\end{align*}\label{2s-1}
			
			\item The total number of descents at the $(2r-3)$-th block of all paths ending at the vertex $\lambda\big(2r;(x_{(2s,k)},x_{(s,k)}+l)\big)$ is $$p^{s-2}(p-1)\left(\frac{p(p-1)}{2}+t\right)\quad \text{when} \quad j^{k}_{t-1}< l\leq j^{k}_{t},~ 1\leq t\leq p-1,~k\geq 1,$$ and $$p^{s-2}(p-1)\left(\frac{p(p-1)}{2}\right)\quad \text{when}\quad l =j^{k}_{0},~ k\geq 0.$$\label{2s-2}
			
		\end{enumerate}
	\end{corollary}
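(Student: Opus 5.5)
The plan is to count descents at a fixed block by multiplying two numbers. The first is the number of admissible choices of the top two or three removed blocks, namely $(t',t'')$, that produce a descent, as listed in Theorem~\ref{rules l}. The second is the number of ways to complete such a partial path down to floor $1$. So the first step is to count paths from a vertex of $V^{2r-2}_{k}$ (respectively $W^{2r-3}_{k}$) down to some $\lambda(1;(p-1,t_{r-1}))$.

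From Remark~\ref{syt def}(\ref{blocklabel}), a path below floor $2r-2$ is determined by the free integers $t_i$. Each step from floor $2j$ to floor $2j-1$ with $j\geq k+2$ offers $p$ choices. The transition at floor $2k+2$ offers $p-1$ choices ($0\le t_{r-1}\le p-2$). Below floor $2k+1$ the path is forced, since $l$ determines the remaining $t_i$ through projections. Hence the number of paths from a vertex of $V^{2(r-1)}_k$ down to floor $1$ is $p^{s-2}(p-1)$, where $s-1=r-1-k$. Likewise the number of paths from the vertex of $V^{2r}_k$ is $p^{s-1}(p-1)$. I will verify this by induction on $s$, with base case Lemma~\ref{2p-3}: there the vertex on floor $2k+2$ has exactly $p-1$ paths, indexed by $t\in\{0,\dots,p-2\}$.

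For part (\ref{2s-1}), a descent at block $2r-2$ depends only on $t'$. By Theorem~\ref{rules l}(1),(2),(6), the number of admissible $t'$ is $\frac{p+1}{2}$ (namely $t'=0,\dots,\frac{p-1}{2}$) when $l<\frac{p^k-1}{2}$. It is $\frac{p-1}{2}$ when $l\ge\frac{p^k-1}{2}$ or $k=0$. Each $t'$ is then extended by any of the $p^{s-1}(p-1)/p=p^{s-2}(p-1)$ completions below floor $2r-2$. Multiplying gives $p\cdot p^{s-2}(p-1)\cdot(\#t')/p\cdots$; more simply, fixing $t'$ leaves $p^{s-2}(p-1)\cdot p$ paths, since $t''$ ranges freely. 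This yields $p^{s-1}(p-1)\cdot\#\{t'\}$, as claimed.

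For part (\ref{2s-2}), a descent at block $2r-3$ depends on the pair $(t',t'')$. The remaining $p^{s-2}(p-1)/p\cdot p$ completions below floor $2r-3$ contribute the factor $p^{s-2}(p-1)$, so I only need to count the admissible pairs. For $j^k_{t-1}<l\le j^k_t$, Theorem~\ref{rules l}(3) contributes $t'+1$ values of $t''$ for each $0\le t'\le t-1$, and part (4) contributes $t'$ values for each $t\le t'\le p-1$. The total is
\[
\sum_{t'=0}^{t-1}(t'+1)+\sum_{t'=t}^{p-1}t'=\sum_{t'=0}^{p-1}t'+t=\frac{p(p-1)}{2}+t .
\]
For $l=j^k_0$, including $k=0$, parts (5) and (7) contribute $\sum_{t'=1}^{p-1}t'=\frac{p(p-1)}{2}$.

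The main obstacle is the completion count when $s=2$, i.e. $r=k+2$. There the block at $2r-3=2k+1$ carries the convention \ref{c1} and $t''$ ranges only over $0,\dots,p-2$, so the uniform factor $p$ for $t''$ must be reconciled with Theorem~\ref{rules l}'s range $0\le t''\le p-1$. I would first check that Theorem~\ref{rules l} is stated for $s\ge2$ with the generic edges. If not, I would treat $s=2$ separately using Lemma~\ref{2p-3} and confirm that the formula still holds.
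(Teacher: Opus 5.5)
\textbf{Gap: the completion factor is off by $p$.} Your strategy is the paper's own. You read off from Theorem~\ref{rules l} how many top choices give a descent, then multiply by the number of ways to complete the path. Your counts of top choices are correct: $\frac{p\pm1}{2}$ for block $2r-2$, and $\sum_{t'=0}^{t-1}(t'+1)+\sum_{t'=t}^{p-1}t'=\frac{p(p-1)}{2}+t$ for block $2r-3$.

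The error is in the multiplier.
\begin{itemize}
\item A descent at block $2r-2$ depends only on $t'$. Once $t'$ is fixed, the path is forced down to the component on floor $2r-2$. From there, as you yourself note, there are exactly $p^{s-2}(p-1)$ paths to floor $1$.
\item Those paths already contain the choice of $t''$, which is the step from floor $2r-2$ to floor $2r-3$. So ``fixing $t'$ leaves $p^{s-2}(p-1)\cdot p$ paths, since $t''$ ranges freely'' counts $t''$ twice.
\item Similarly, below a fixed pair $(t',t'')$ the path is forced to floor $2r-4$ and then has $p^{s-3}(p-1)$ completions. Your factor ``$p^{s-2}(p-1)/p\cdot p$'' is the same double count.
\end{itemize}
The honest totals are $p^{s-2}(p-1)\cdot\#\{t'\}$ at block $2r-2$ and $p^{s-3}(p-1)\bigl(\frac{p(p-1)}{2}+t\bigr)$ at block $2r-3$.

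\textbf{The printed formulas are false as stated.} They are too large by a factor of $p$. Take $p=3$, $k=0$, $s=2$. The vertex $\lambda(4;(7,3))$ has $6$ paths. Block $2$ is a descent only when $t'=0$, i.e.\ on $2=(p-1)\frac{p-1}{2}$ paths, not on $6$.

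The paper's proof makes the same unjustified path-count assertion (``there are $p^{s-1}(p-1)$ paths up to the $(2r-2)$-th block''). However, everything downstream uses the corrected exponents:
\begin{itemize}
\item the descent terms in \eqref{s20}--\eqref{greater l} are $p^{s-2}(p-1)\frac{p\pm1}{2}$ and $p^{s-3}(p-1)\bigl(\frac{p(p-1)}{2}+t\bigr)$;
\item the value $\frac{p-1}{2}(2p-1)$ in Theorem~\ref{p0} at $s=2$ needs exactly $(p-1)\frac{p-1}{2}$ descents at block $2$.
\end{itemize}
You should prove this corrected statement rather than force in an extra factor of $p$.

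\textbf{Part (2) needs $s\ge 3$.} At $s=2$ the block $2r-3=2k+1$ falls under \ref{c2}, not \ref{c1}, and \ref{c2} forbids any descent there. Your plan to confirm that the formula ``still holds'' at $s=2$ would therefore fail: the count there is $0$. That is how \eqref{s20}--\eqref{s22} treat it.
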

	\begin{proof}
		The descent at the blocks $2r-2$ and $2r-3$ depends only on the blocks		$B(2r-3;(m_{2r-3},n_{2r-3}))$, $B(2r-2;(m_{2r-2},n_{2r-2}))$, $B(2r-1;(m_{2r-1},n_{2r-1}))$ of each path $P^{2r}_{l,t^{'},t^{''}}$. 
		
		There are $p^{s-1}(p-1)$ paths up to the $(2r-2)$-th block, that connect to the vertex $\lambda\big(2r;(x_{(2s,k)},x_{(s,k)}+l)\big)$. Similarly, there are $p^{s-2}(p-1)$ paths up to the $(2r-3)$-th block, that connect to the same vertex.
		Thus, the descent at the blocks $2r-2$ and $2r-3$ is multiplied across all  these distinct paths from the vertex $\lambda\big(2r;(x_{(2s,k)},x_{(s,k)}+l)\big)$ to the vertex $\lambda\big(1;(p-1,t_{r-1})\big)$, $0\leq t_{r-1} <p-1$.
		
		By Theorem \ref{rules l}, for $k\geq 1$, the number of descents at the $(2r-2)$-th block equals $\left(\frac{p+1}{2}\right)$ when $0\leq l<\frac{p^{k}-1}{2}$ and  $\left(\frac{p-1}{2}\right)$ when $\frac{p^{k}-1}{2}\leq l <p^{k}$. For $k=0$, the number of descents at $(2r-2)$-th block always equals $\left(\frac{p-1}{2}\right)$. Hence, this proves (\ref{2s-1}).
		
		Similarly, for $k\geq0$, by Theorem \ref{rules l}, the number of descents at the $(2r-3)$-th block equals $t^{'}+1$ when $t^{'}\leq t-1$, and $t^{'}$ when $t^{'}\geq t$, for $0\leq t^{'}\leq p-1$. Consequently, the total number of descents at the $(2r-3)$-th block equals $\left(\frac{p(p-1)}{2}+t\right)$, $0\leq t\leq p-1$, which proves (\ref{2s-2}).	
	\end{proof}
	\section{The \texorpdfstring{$p^{(k)}$}{p(k)}-Fibonacci numbers}
	In this section, we define the $p^{(k)}$-Fibonacci number for each vertex in the set $V^{2r}_{k}$ where $0\leq k\leq r-1$ and $r\geq1$. We compute the  $p^{(k)}$-Fibonacci numbers using mathematical induction. Furthermore, we identify the subclasses of the vertex set $V^{2r}_{k}$ in which the $p^{(k)}$-Fibonacci numbers remain the same. 
	
	\begin{definition}\label{Fibo}
		For $0\leq k\leq r-1$ and $r\geq 2$, let $s=r-k$. We define the \textit{$p^{(k)}$-Fibonacci number} is the total number of descents up to the $(2r-2)$-th block of all paths in $P^{2r}_{l}$ (as defined in Remark \ref{syt def}(\ref{pathset})). We denote this number by $\mathcal{M}\Big(\lambda\big(2r;(x_{(2s,k)},x_{(s,k)}+l)\big)\Big);$ that is, $$\mathcal{M}\Big(\lambda\big(2r;(x_{(2s,k)},x_{(s,k)}+l)\big)\Big)=\sum_{P^{2r}_{l,m_{2r-1},\ldots,m_{1},t_{r-1}}\in P^{2r}_{l}} des(P^{2r}_{l,m_{2r-1},\ldots,m_{1},t_{r-1}}).$$
		We can rewrite this quantity as the sum of all descents of the components of the vertex $\lambda\big(2r;(x_{(2s,k)},x_{(s,k)}+l)\big)$ (as in Definition \ref{comp}) up to the $(2r-4)$-th block of all paths in $P^{2r-2}_{\alpha+p^{k-1}t^{'}},~0\leq t^{'}\leq p-1$, together with the descents occurring at the $(2r-2)$-th and $(2r-3)$-th blocks of all paths in $P^{2r}_{l}$. By Corollary \ref{cor rules l}, we obtain the following formulas:
\begin{description}
  \item[\textbf{For $s=2$ and $k = 0$}]: 		 
		\begin{align}\label{s20}
			\mathcal{M}\Big(\lambda\big(4;(x_{(4,0)},x_{(2,0)})\big)\Big) & = \sum_{t^{'}=0}^{p-1}\mathcal{M}\Big(\lambda\big(2;(x_{(2,0)},x_{(1,0)})\big)\Big)+\frac{p-1}{2}(p-1)
		\end{align}
		\item[\textbf{For $s=2$ and $k\geq 1$}]:		
\begin{description}
  \item[\textbf{Case $0\leq l <\frac{p^{k}-1}{2}$}]:
		\begin{equation}\label{s21}
			\begin{aligned}
					\mathcal{M}\Big(\lambda\big(2k+4;(x_{(4,k)},x_{(2,k)}+l)\big)\Big) \\ & \hspace*{-5cm} = \sum_{t^{'}=0}^{p-1}\mathcal{M}\Big(\lambda\big(2k+2;(x_{(2,k)},x_{(1,k)}+(\alpha+p^{k-1}t^{'}))\big)\Big) + \frac{p+1}{2}(p-1)				
			\end{aligned}
		\end{equation}		
\item[\textbf{Case $\frac{p^{k}-1}{2}\leq l <p^{k}$}]:
		\begin{equation}\label{s22}
			\begin{aligned}
				\mathcal{M}\Big(\lambda\big(2k+4;(x_{(4,k)},x_{(2,k)}+l)\big)\Big) \\ & \hspace*{-5.3cm} = \sum_{t^{'}=0}^{p-1}\mathcal{M}\Big(\lambda\big(2k+2;(x_{(2,k)},x_{(1,k)}+(\alpha+p^{k-1}t^{'}))\big)\Big)+ \frac{p-1}{2}(p-1)
			\end{aligned}
		\end{equation}
\end{description}
\item[\textbf{For $s\geq3$ and $k=0$}]:
\begin{equation}\label{k0}
			\begin{aligned}
		\mathcal{M}\Big(\lambda\big(2r;(x_{(2s,0)},x_{(s,0)})\big)\Big)&=p\mathcal{M}\Big(\lambda\big(2r-2; (x_{(2s-2,0)},x_{(s-1,0)})\big)\Big)\\&+p^{s-2}\left(\frac{(p-1)^{2}}{2}\right)+p^{s-3}(p-1)\left(\frac{p(p-1)}{2}\right)
		\end{aligned}
		\end{equation}
	\item[\textbf{For $s\geq3$ and $1\leq k\leq r-1$}]:
\begin{description}
  \item[\textbf{Case $l=0$}]: 
  \begin{equation}\label{l0}
		\begin{aligned}
			\mathcal{M}\Big(\lambda\big(2r;(x_{(2s,k)},x_{(s,k)}+0)\big)\Big) \\ &\hspace*{-3cm}=
				\sum_{t^{'}=0}^{p-1}\mathcal{M}\Big(\lambda\big(2r-2;(x_{(2s-2,k)},x_{(s-1,k)}+p^{k-1}t^{'})\big)\Big) \\ &\hspace*{-3cm}+p^{s-2}\left(\frac{p^{2}-1}{2}\right)+p^{s-3}(p-1)\left(\frac{p(p-1)}{2}\right)
			\end{aligned}
	\end{equation}	
\item[\textbf{Case $1\leq l <\frac{p^{k}-1}{2}$ and $j^{k}_{t-1}< l \leq j^{k}_{t}$, $1\leq t \leq \frac{p-1}{2}~\text{and}~l\neq j^{k}_{\frac{p-1}{2}}$}]:
\begin{equation}\label{less l}
		\begin{aligned}
			\mathcal{M}\Big(\lambda\big(2r;(x_{(2s,k)},x_{(s,k)}+l)\big)\Big) \\ &\hspace*{-3cm} =	\sum_{t^{'}=0}^{p-1}\mathcal{M}\Big(\lambda\big(2r-2;(x_{(2s-2,k)},x_{(s-1,k)}+(\alpha+p^{k-1}t^{'}))\big)\Big) \\ & \hspace*{-3cm}\quad +p^{s-2}\left(\frac{p^{2}-1}{2}\right)+p^{s-3}(p-1)\left(\frac{p(p-1)}{2}+t\right)
		\end{aligned}
	\end{equation}
\item[\textbf{Case $\frac{p^{k}-1}{2}\leq l <p^{k}$ and $j^{k}_{t-1}\leq l \leq j^{k}_{t}$, $\frac{p+1}{2}\leq t \leq p-1$}]:
\begin{equation}\label{greater l}
			\begin{aligned}
				\mathcal{M}\Big(\lambda\big(2r;(x_{(2s,k)},x_{(s,k)}+l)\big)\Big)\\&\hspace*{-3cm}=\sum_{t^{'}=0}^{p-1}\mathcal{M}\Big(\lambda\big(2r-2;(x_{(2s-2,k)},x_{(s-1,k)}+(\alpha+p^{k-1}t^{'}))\big)\Big) \\  &\hspace*{-3cm}\quad +p^{s-2}\left(\frac{(p-1)^{2}}{2}\right) +p^{s-3}(p-1)\left(\frac{p(p-1)}{2}+t\right)
			\end{aligned}
		\end{equation}
\end{description}
\end{description}
		\end{definition}
	\begin{theorem}\label{p0}
		For $r\geq 1$, the $p^{(0)}$-Fibonacci number at the vertex $\lambda\big(2r;(x_{(2s,0)},x_{(s,0)})\big)$ is
		\begin{align*}
			\mathcal{M}\Big(\lambda\big(2;(x_{(2,0)},x_{(1,0)})\big)\Big)&=\frac{p-1}{2}, \\
			\mathcal{M}\Big(\lambda\big(2r;(x_{(2s,0)},x_{(s,0)})\big)\Big)&=\frac{p-1}{2}(2(s-1)p^{s-1}-(2s-3)p^{s-2}),~s\geq 2
		\end{align*}
		where $r=s$.  
	\end{theorem}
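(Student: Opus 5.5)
The plan is to argue by induction on $s$, using the recurrences (\ref{s20}) and (\ref{k0}) of Definition \ref{Fibo}; in the case $k=0$ we have $r=s$. The base value $\mathcal{M}\big(\lambda(2;(x_{(2,0)},x_{(1,0)}))\big)=\frac{p-1}{2}$ is the case $k=0$ of Lemma \ref{2p-3}. There, every path ending at $\lambda(2;(x_{(2,0)},x_{(1,0)}))$ consists of a single block $B(1;(t,p-2-t))$ with $0\le t\le p-2$. By convention \ref{c1}, such a path carries exactly one descent when $0\le t<\frac{p-1}{2}$ and none otherwise, so the descents total $\frac{p-1}{2}$.

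Next I will treat $s=2$ separately, since (\ref{k0}) is only stated for $s\ge 3$. Substituting the base value into (\ref{s20}) gives
\[
\mathcal{M}\big(\lambda(4;(x_{(4,0)},x_{(2,0)}))\big)=p\cdot\frac{p-1}{2}+\frac{(p-1)^2}{2}=\frac{p-1}{2}(2p-1).
\]
The claimed formula at $s=2$ reads $\frac{p-1}{2}(2p-1)$, so the two agree.

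For $s\ge3$, I will assume the formula at $s-1$, namely $M_{s-1}=\frac{p-1}{2}\big(2(s-2)p^{s-2}-(2s-5)p^{s-3}\big)$, and substitute it into (\ref{k0}). That gives
\[
M_s=pM_{s-1}+p^{s-2}\frac{(p-1)^2}{2}+p^{s-2}\frac{(p-1)^2}{2}=pM_{s-1}+(p-1)^2p^{s-2}.
\]
Factoring out $\frac{p-1}{2}$ leaves
\[
2(s-2)p^{s-1}-(2s-5)p^{s-2}+2(p-1)p^{s-2}=2(s-1)p^{s-1}-(2s-3)p^{s-2},
\]
which is exactly the claimed expression. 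This completes the induction.

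The main obstacle is not the algebra but justifying the recurrence itself, in particular the additive terms. For those terms I rely on Corollary \ref{cor rules l}: with $k=0$, block $2r-2$ contributes $p^{s-1}(p-1)\frac{p-1}{2}$ descents, and block $2r-3$ contributes $p^{s-2}(p-1)\frac{p(p-1)}{2}$. One point needs attention. In (\ref{k0}) these terms are written as $p^{s-2}\frac{(p-1)^2}{2}$ and $p^{s-3}(p-1)\frac{p(p-1)}{2}$, so the multiplicities there are taken to be $p^{s-2}$ and $p^{s-3}(p-1)$. For the proof I will take the recurrence (\ref{k0}) exactly as stated in Definition \ref{Fibo}, with those path counts, and check only that its two additive terms each equal $p^{s-2}\frac{(p-1)^2}{2}$. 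Their sum is then the $(p-1)^2p^{s-2}$ used above.
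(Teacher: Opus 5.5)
Your proposal is correct and follows the paper's proof step for step: induction on $s$, with the base value from Lemma~\ref{2p-3}, the case $s=2$ from \eqref{s20}, and the inductive step from \eqref{k0}, whose two additive terms sum to $(p-1)^{2}p^{s-2}$. The discrepancy you flagged resolves in favour of \eqref{k0}: each of the $\frac{p-1}{2}$ choices of $t^{'}$ that produce a descent leads to the unique vertex of $V^{2r-2}_{0}$, below which there are $p^{s-2}(p-1)$ paths, and likewise each descent-producing pair $(t^{'},t^{''})$ is followed by $p^{s-3}(p-1)$ paths, so the totals printed in Corollary~\ref{cor rules l} overcount by a factor of $p$ and the multiplicities in \eqref{k0} are the correct ones.
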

	\begin{proof}
		We prove the theorem by mathematical induction on $s$.  
		
		For $s=1$, by Lemma \ref{2p-3}, we have
		$$\mathcal{M}\Big(\lambda\big(2;(x_{(2,0)},x_{(1,0)})\big)\Big) = \frac{p-1}{2}.$$
		
		For $s=2$, again by Lemma \ref{2p-3}, there is no descent at the $1^{st}$ block of any path in the set $P^{4}_{0}$. From equation \eqref{s20}, the $p^{(0)}$-Fibonacci number at the vertex $\lambda\big(4;(x_{(4,0)},x_{(2,0)})\big)$ is
		$$\mathcal{M}\Big(\lambda\big(4;(x_{(4,0)},x_{(2,0)})\big)\Big) =p\left(\frac{p-1}{2}\right)+(p-1)\frac{p-1}{2}=\frac{p-1}{2}(2p-1).$$
		
		Assume that the theorem holds for $s-1$, i.e.,
		\begin{align}\label{s-10}
			\mathcal{M}\Big(\lambda\big(2(r-1);(x_{(2(s-1),0)},x_{(s-1,0)})\big)\Big) =\frac{p-1}{2}(2(s-2)p^{s-2}-(2s-5)p^{s-3}).
		\end{align}
		We now prove that the theorem holds for $s$. By
		substituting the $p^{(0)}$-Fibonacci numbers from equation \eqref{s-10} into equation \eqref{k0} of Definition \ref{Fibo}, we obtain	
		\begin{align*}
			\mathcal{M}\Big(\lambda\big(2r;(x_{(2s,0)},x_{(s,0)})\big)\Big)&=p\left(\frac{p-1}{2}\left(2(s-2)p^{s-2}-(2s-5)p^{s-3}\right)\right)\\&\quad+2p^{s-2}(p-1)\left(\frac{p-1}{2}\right)\\
			&=\left(\frac{p-1}{2}\right)\left(2(s-1)\cdot p^{s-1}-(2s-3)p^{s-2}\right)
		\end{align*}
		Hence, the formula holds for all $s\geq 2$.	
	\end{proof}
		\begin{theorem}\label{p1}
		For $r\geq2$, the $p^{(1)}$-Fibonacci number at the vertex $\lambda\big(2r;(x_{(2s,1)},x_{(s,1)}+l)\big)$, where $l=j^{1}_{t}=t$ with $0\leq l\leq p-1$, and where $l$ denotes the position of the vertex in $V^{2r}_{1}$, as follows
		\begin{align*}
			\mathcal{M}\Big(\lambda\big(4;(x_{(2,1)},x_{(1,1)}+t)\big)\Big) & = \dfrac{p-1}{2}, 0\leq t\leq p-1,
		\end{align*}
		\begin{displaymath}
			\mathcal{M}\Big(\lambda\big(6;(x_{(4,1)},x_{(2,1)}+t)\big)\Big) =
			\begin{cases}
				\dfrac{p-1}{2}(2p+1), & 0\leq t<\dfrac{p-1}{2};
				\\
				\dfrac{p-1}{2}(2p-1), & \dfrac{p-1}{2}\leq t\leq p-1,
			\end{cases}
		\end{displaymath}
		and for $s\geq 3$
		
		$\mathcal{M}\Big(\lambda\big(2r;(x_{(2s,1)},x_{(s,1)}+t)\big)\Big) $
		\begin{displaymath}
			=
			\begin{cases}
				\dfrac{p^{s-3}(p-1)}{2}(2(s-1)p^{2}+2t-(2s-5)), & 0\leq t<\dfrac{p-1}{2};
				\\
				\dfrac{p^{s-3}(p-1)}{2}(2(s-1)p^{2}+2t-2p-(2s-5)), & \dfrac{p-1}{2}\leq t\leq p-1.
			\end{cases}
		\end{displaymath}		
	\end{theorem}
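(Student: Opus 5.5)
The plan is induction on $s$ with $k=1$ fixed, using the recurrences of Definition \ref{Fibo}. The key structural observation is the shape of the components when $k=1$. Writing $l=t=\alpha p+\beta$ with $0\le t\le p-1$ forces $\alpha=0$ and $\beta=t$. Hence the components of $\lambda\big(2r;(x_{(2s,1)},x_{(s,1)}+t)\big)$ (Definition \ref{comp}) sit at positions $\alpha+p^{0}t'=t'$ for $0\le t'\le p-1$. So the sum over components appearing in every recurrence is simply the sum of $\mathcal{M}$ over \emph{all} $p$ vertices of $V^{2r-2}_{1}$, independent of $t$. Also $j^{1}_{t}=t$, so the condition $j^{1}_{t-1}<l\le j^{1}_{t}$ just says $l=t$, and $\frac{p^{1}-1}{2}=\frac{p-1}{2}$.

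\textbf{Base case $s=1$ ($r=2$).} This is Lemma \ref{2p-3} with $k=1$: every vertex $\lambda\big(4;(x_{(2,1)},x_{(1,1)}+t)\big)$ has total descent count $\frac{p-1}{2}$.

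\textbf{Case $s=2$ ($r=3$).} Apply \eqref{s21} for $0\le t<\frac{p-1}{2}$ and \eqref{s22} for $t\ge\frac{p-1}{2}$. Each of the $p$ components contributes $\frac{p-1}{2}$, so
\[
\mathcal{M}=p\cdot\tfrac{p-1}{2}+\tfrac{p\pm1}{2}(p-1)=\tfrac{p-1}{2}(2p\pm1),
\]
as claimed.

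\textbf{Inductive step, $s\ge3$.} Let $S_{s-1}=\sum_{u=0}^{p-1}\mathcal{M}\big(\lambda(2r-2;(x_{(2s-2,1)},x_{(s-1,1)}+u))\big)$. By the inductive hypothesis, the $\frac{p-1}{2}$ low positions use the first formula and the $\frac{p+1}{2}$ high positions use the second. Summing requires only $\sum_{u=0}^{p-1}u=\frac{p(p-1)}{2}$ and the count $\frac{p+1}{2}$ of the high positions, which carry the extra $-2p$ term.

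For $s-1=2$ this gives
\[
S_2=\tfrac{p-1}{2}\Big(\tfrac{p-1}{2}(2p+1)+\tfrac{p+1}{2}(2p-1)\Big)=\tfrac{p-1}{2}(2p^{2}-1).
\]
For general $s-1\ge3$ the same computation expresses $S_{s-1}$ as $\frac{p^{s-4}(p-1)}{2}$ times a polynomial in $p$ and $s$.

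Next, add the extra terms according to $t$:
\begin{itemize}
\item $t=0$: use \eqref{l0}, whose last term equals the $t=0$ instance of $p^{s-3}(p-1)\big(\frac{p(p-1)}{2}+t\big)$.
\item $1\le t<\frac{p-1}{2}$: use \eqref{less l}.
\item $\frac{p-1}{2}\le t\le p-1$: use \eqref{greater l}. For the boundary value $l=\frac{p-1}{2}$, read the descent count at block $2r-3$ from Corollary \ref{cor rules l}(\ref{2s-2}) directly as $p^{s-3}(p-1)\big(\frac{p(p-1)}{2}+t\big)$ with $t=l$, to avoid the ambiguous indexing in \eqref{greater l}.
\end{itemize}
In each case the block-$(2r-2)$ contribution is $p^{s-2}\frac{p^2-1}{2}$ for low $t$ and $p^{s-2}\frac{(p-1)^2}{2}$ for high $t$. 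These differ by $p^{s-2}(p-1)=\frac{p^{s-3}(p-1)}{2}\cdot 2p$, which is exactly the $-2p$ discrepancy between the two cases of the claimed formula.

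As a check at $s=3$, low $t$ gives
\[
\tfrac{p-1}{2}\big(2p^2-1+p(p+1)+p(p-1)+2t\big)=\tfrac{p-1}{2}(4p^{2}+2t-1),
\]
which matches the statement. The main obstacle is the algebra of $S_{s-1}$ for general $s$ together with the careful case bookkeeping at the boundary $t=\frac{p-1}{2}$. I will organize it by factoring out $\frac{p^{s-3}(p-1)}{2}$ and then verifying that the coefficient of $p^{2}$ increases by $2$ and the constant term decreases by $2$ at each step, consistent with $2(s-1)p^{2}-(2s-5)$.
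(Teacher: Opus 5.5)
Your proposal is correct and follows the paper's proof essentially step for step. Both use induction on $s$, with base cases from Lemma~\ref{2p-3} and equations \eqref{s21}--\eqref{s22}, and then substitute the sum of $\mathcal{M}$ over all $p$ components into \eqref{less l} and \eqref{greater l}; your value $S_{s-1}=\frac{p^{s-3}(p-1)}{2}\bigl(2(s-2)p^{2}-(2s-5)\bigr)$, valid for $s\ge 3$, does close the induction. Your explicit handling of $t=0$ via \eqref{l0} and of $l=\frac{p-1}{2}$ (taking $t=l$) just spells out what the paper does silently. One small correction: Corollary~\ref{cor rules l} as printed has $p^{s-2}$, not $p^{s-3}$, so justify the factor $p^{s-3}(p-1)$ by counting paths below floor $2r-4$ or take it from Definition~\ref{Fibo}, rather than quoting the corollary.
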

	\begin{proof}
		We prove the theorem by mathematical induction on $s$.
		
		For $s=1$, by Lemma $\ref{2p-3}$, we have
		\begin{align} \label{s14}
			\mathcal{M}\Big(\lambda\big(4;(x_{(2,1)},x_{(1,1)}+t)\big)\Big) =\frac{p-1}{2},~\forall~0\leq t\leq p-1.
		\end{align}
		
		For $s=2$, again by Lemma $\ref{2p-3}$, there is no descent at the $3^{rd}$ block. Substituting the $p^{(1)}$-Fibonacci numbers from equation \eqref{s14} into equation \eqref{s21} of Definition \ref{Fibo}, we obtain for $0\leq t<\frac{p-1}{2},$
		\begin{align}\label{4p-5 i}
			\mathcal{M}\Big(\lambda\big(6;(x_{(4,1)},x_{(2,1)}+t)\big)\Big) &=p\left(\frac{p-1}{2}\right)+\left(\frac{p+1}{2}\right)(p-1)  =\frac{p-1}{2}\left(2p+1\right)
		\end{align}
		
		Similarly, by substituting the $p^{(1)}$-Fibonacci numbers from equation \eqref{s14} into equation \eqref{s22} of Definition \ref{Fibo}, we obtain for $\frac{p-1}{2}\leq t \leq p-1,$
		\begin{align}\label{4p-5 ii}
			\mathcal{M}\Big(\lambda\big(6;(x_{(4,1)},x_{(2,1)}+t)\big)\Big) &=p\left(\frac{p-1}{2}\right)+\left(\frac{p-1}{2}\right)(p-1)=\frac{p-1}{2}\left(2p-1\right)
		\end{align}
		
		For $s=3$, substituting the $p^{(1)}$-Fibonacci numbers from equations \eqref{4p-5 i} and \eqref{4p-5 ii} into equation \eqref{less l} of Definition \ref{Fibo}, we obtain for $0\le t<\frac{p-1}{2},$
		\begin{align*}
		\mathcal{M}\Big(\lambda\big(8;(x_{(6,1)},x_{(3,1)}+t)\big)\Big) \\	&\hspace*{-4cm}=\left(\frac{p-1}{2}\right)\left(\frac{(p-1)(2p+1)}{2}+\frac{(p+1)(2p-1)}{2}+p(p+1)+p(p-1)+2t\right) \\ &\hspace*{-4cm}=\left(\frac{p-1}{2}\right)(4p^{2}+2t-1).
		\end{align*}
		
		Similarly, substituting the $p^{(1)}$-Fibonacci numbers from equations \eqref{4p-5 i} and \eqref{4p-5 ii} into equation \eqref{greater l} of Definition \ref{Fibo}, we obtain, for $\frac{p-1}{2}\leq t\leq p-1,$
		\begin{align*}
			\mathcal{M}\Big(\lambda\big(8;(x_{(6,1)},x_{(3,1)}+t)\big)\Big)
			\\ &\hspace*{-4cm}=\left(\frac{p-1}{2}\right)\left(\frac{(p-1)(2p+1)}{2}+\frac{(p+1)(2p-1)}{2}+p(p-1)+p(p-1)+2t\right) \\&\hspace*{-4cm}=\left(\frac{p-1}{2}\right)(4p^{2}+2t-2p-1).
		\end{align*}
		
		Assume that the theorem holds for $s-1$. That is, for $0\leq t<\frac{p-1}{2},$
		\begin{align}\label{p,s-1}
			\mathcal{M}\Big(\lambda\big(2(r-1);(x_{(2(s-1),1)},x_{(s-1,1)}+t)\big)\Big) & = \dfrac{p^{s-4}(p-1)}{2}\left(2(s-2)p^{2}+2t-(2s-7)\right). 
		\end{align}
		
		For $\frac{p-1}{2}\leq t\leq p-1,$
		\begin{equation}\label{p,s-11}
			\begin{aligned}
					\mathcal{M}\Big(\lambda\big(2(r-1);(x_{(2(s-1),1)},x_{(s-1,1)}+t)\big)\Big) \\&\hspace*{-2.5cm} = 	\dfrac{p^{s-4}(p-1)}{2}\left(2(s-2)p^{2}+2t-2p-(2s-7)\right). 		\end{aligned}
		\end{equation}	
		We now show that the theorem holds for $s$. 
		
		Substituting the $p^{(1)}$-Fibonacci numbers from equations \eqref{p,s-1} and \eqref{p,s-11} into equation \eqref{less l} of Definition \ref{Fibo}, we obtain, for $0\leq t <\frac{p-1}{2},$
		
		$\mathcal{M}\Big(\lambda\big(2r;(x_{(2s,1)},x_{(s,1)}+t)\big)\Big)$
		\begin{align*}
			&= \sum_{t^{'}=0}^{p-1} \mathcal{M}\Big(\lambda\big(2(r-1);(x_{(2(s-1),1)},x_{(s-1,1)}+t^{'})\big)\Big)+p^{s-2}\left(\frac{p^{2}-1}{2}\right)\\ &\quad+p^{s-3}(p-1)\left(\frac{p(p-1)}{2}+t\right) \\ \\ &= \left(\frac{p^{s-4}(p-1)}{2}\right)\left(\sum\limits_{t^{'}=0}^{\frac{p-3}{2}}2(s-2)p^{2}+2t^{'}-(2s-7)+p^{2}(p-1)\left(\frac{p+1}{2}\right)\right. \\ & \left.\quad+\sum\limits_{t^{'}=\frac{p-1}{2}}^{p-1}2(s-2)p^{2}+2t^{'}-2p-(2s-7)+p(p-1)\left(\frac{p(p-1)}{2}+t\right)\right) \\ \\
			&= \frac{p^{s-3}(p-1)}{2}\left(2(s-1)p^{2}+2t-(2s-5)\right)
		\end{align*}
		
		Similarly, substituting the $p^{(1)}$-Fibonacci numbers from equations \eqref{p,s-1} and \eqref{p,s-11} into equation \eqref{greater l} of Definition \ref{Fibo}, we obtain, for $\frac{p-1}{2}\leq t\leq p-1$, 
		\begin{align*}
			 \mathcal{M}\Big(\lambda\big(2r;(x_{(2s,1)},x_{(s,1)}+t)\big)\Big)&= \sum_{t^{'}=0}^{p-1} \mathcal{M}\Big(\lambda\big(2(r-1);(x_{(2(s-1),1)},x_{(s-1,1)}+t^{'})\big)\Big)\\&\quad+p^{s-2}\left(\frac{(p-1)^{2}}{2}\right)+p^{s-3}(p-1)\left(\frac{p(p-1)}{2}+t\right) \\ 
			&= \frac{p^{s-3}(p-1)}{2}(2(s-1)p^{2}+2t-2p-(2s-5)).
		\end{align*}
		
		Thus, by the principle of mathematical induction, the result holds for all $s\geq 3$.
	\end{proof}
	\begin{example}
		The $5^{1}$-Fibonacci numbers at the vertices $\lambda\big(2(s+1);(x_{(2s,1)},x_{(s,1)}+l)\big)$, where $3\leq s\leq 9$, are given by
		\begin{enumerate}
			\item For $l=0$, we have 
			
			$\{\mathcal{M}\Big(\lambda\big(2(s+1);(x_{(2s,1)},x_{(s,1)}+0)\big)\Big)|~3\leq s\leq 10\}$ 
			
			\quad\quad\quad\quad\quad\quad $=\{198, 1470, 9750, 60750, 363750, 2118750, 12093750\}$.
			\item For $l=1$, we have 
			
			$\{\mathcal{M}\Big(\lambda\big(2(s+1);(x_{(2s,1)},x_{(s,1)}+1)\big)\Big)|~3\leq s\leq 10\} $
			
			\quad\quad\quad\quad\quad\quad$=\{202, 1490, 9850, 61250, 366250, 2131250, 12156250\}$.
			\item For $l=2$, we have 
			
			$\{\mathcal{M}\Big(\lambda\big(2(s+1);(x_{(2s,1)},x_{(s,1)}+2)\big)\Big)|~3\leq s\leq 10\}$
			
			\quad\quad\quad\quad\quad\quad$=\{186, 1410, 9450, 59250, 356250, 2081250, 11906250\}$.
			\item For $l=3$, we have 
			
			$\{\mathcal{M}\Big(\lambda\big(2(s+1);(x_{(2s,1)},x_{(s,1)}+3)\big)\Big)|~3\leq s\leq 10\}$
			
			\quad\quad\quad\quad\quad\quad$=\{190, 1430, 9550, 59750, 358750, 2093750, 11968750\}$.
			\item For $l=4$, we have 
			
			$\{\mathcal{M}\Big(\lambda\big(2(s+1);(x_{(2s,1)},x_{(s,1)}+4)\big)\Big)|~3\leq s\leq 10\}$
			
			\quad\quad\quad\quad\quad\quad$=\{194, 1450, 9650, 60250, 361250, 2106250, 12031250\}$.
		\end{enumerate}
	\end{example}
	\begin{theorem} \label{base}
		Let $2\leq k\leq r-1$ and $1\leq s\leq 3$ $(\text{where}~r-k=s)$. Then the $p^{(k)}$-Fibonacci number at the vertex $\lambda\big(2r;(x_{(2s,k)},x_{(s,k)}+l)\big)$, $0\leq l <p^{k}$, is given by 
		\begin{description}
		  \item[For $s=1:$] \begin{align*}
			\mathcal{M}\Big(\lambda\big(2k+2;(x_{(2,k)},x_{(1,k)}+l)\big)\Big) &=\frac{p-1}{2},~\forall~0\leq l\leq p^{k}-1.
		\end{align*}
		  \item[For $s=2:$] \begin{displaymath}
			\mathcal{M}\Big(\lambda\big(2k+4;(x_{(4,k)},x_{(2,k)}+l)\big)\Big)  =
			\begin{cases}
				\frac{p-1}{2}\left(2p+1\right), & \text{for $0\leq l \leq \frac{p^{k}-1}{2}$;} \\ 
				\frac{p-1}{2}\left(2p-1\right), & \text{for $\frac{p^{k}-1}{2}\leq l <p^{k}$.}
			\end{cases}	
		\end{displaymath}
		  \item[For $s\geq3:$] $\mathcal{M}\Big(\lambda\big(2k+6;(x_{(6,k)},x_{(3,k)}+l)\big)\Big) $
		\begin{displaymath}
			 =
			\begin{cases}
				\frac{p-1}{2}\left(4p^{2}+1\right), & \text{for $l =0$;} \\ 
				\frac{p-1}{2}\left(4p^{2}+2t-1\right), & \text{for $l\in [(t-1)\sum\limits_{\iota=0}^{k-1}p^{\iota}+1,\ldots,t\sum\limits_{\iota=0}^{k-1}p^{\iota}],~1\leq t\leq \frac{p-3}{2}$;} \\ 
				\frac{p-1}{2}\left(4p^{2}+1+p-1\right), & \text{for $l\in [\frac{p-3}{2}\sum\limits_{\iota=0}^{k-1}p^{\iota}+1,\ldots,\frac{p-1}{2}\sum\limits_{\iota=1}^{k-1}p^{\iota}-1]$;}\\ 
				\frac{p-1}{2}\left(4p^{2}-1+p-1\right), & \text{for $l\in [\frac{p-1}{2}\sum\limits_{\iota=1}^{k-1}p^{\iota},\ldots,\frac{p-1}{2}\sum\limits_{\iota=0}^{k-1}p^{\iota}-1]$;}\\ 
				\frac{p-1}{2}\left(4p^{2}-1-p-1\right), & \text{for $l=\frac{p-1}{2}\sum\limits_{\iota=0}^{k-1}p^{\iota}$;}\\ 
				\frac{p-1}{2}\left(4p^{2}-1+2(t-p)\right), & \text{for } l\in[(t-1)\sum\limits_{\iota=0}^{k-1}p^{\iota}+1,\ldots,t\sum\limits_{\iota=0}^{k-1}p^{\iota}],\\ & ~\frac{p+1}{2}\leq t\leq p-1.\\			\end{cases}	
		\end{displaymath}
		\end{description}		
	\end{theorem}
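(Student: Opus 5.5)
The plan is to argue exactly as in the proofs of Theorems \ref{p0} and \ref{p1}: compute the three base levels $s=1,2,3$ one after the other, feeding each level into the recurrences of Definition \ref{Fibo}.

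\textbf{Case $s=1$.} This is immediate from Lemma \ref{2p-3}. The total number of descents over all paths ending at $\lambda\big(2k+2;(x_{(2,k)},x_{(1,k)}+l)\big)$ is $\frac{p-1}{2}$, independently of $l$.

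\textbf{Case $s=2$.} Lemma \ref{2p-3} shows there is no descent at block $2k+1$, by convention \ref{c2}, so we may apply equations \eqref{s21} and \eqref{s22}. All $p$ components $\lambda\big(2k+2;(x_{(2,k)},x_{(1,k)}+(\alpha+p^{k-1}t'))\big)$ lie at level $s=1$, so each contributes $\frac{p-1}{2}$. This gives
\[
p\tfrac{p-1}{2}+\tfrac{p\pm1}{2}(p-1)=\tfrac{p-1}{2}(2p\pm1),
\]
with the sign determined by whether $l<\frac{p^k-1}{2}$ (the boundary value $l=\frac{p^k-1}{2}$ falls in the second case, as in \eqref{s22}).

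\textbf{Case $s=3$.} Here we use \eqref{l0}, \eqref{less l} and \eqref{greater l} with $s=3$, so $p^{s-2}=p$ and $p^{s-3}=1$. The constant terms are $p\frac{p^2-1}{2}$ or $p\frac{(p-1)^2}{2}$, together with $(p-1)\big(\frac{p(p-1)}{2}+t\big)$, where $t$ is the index with $j^k_{t-1}<l\le j^k_t$ (and $t=0$ when $l=0$).

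The sum over components needs to know, for each $t'\in\{0,\dots,p-1\}$, whether the component position $\alpha+p^{k-1}t'$ is $<\frac{p^k-1}{2}$. Here $l=\alpha p+\beta$ with $0\le\alpha<p^{k-1}$. Since $\frac{p^k-1}{2}=\frac{p-1}{2}p^{k-1}+\frac{p^{k-1}-1}{2}$, the count is as follows:
\begin{itemize}
\item for $t'<\frac{p-1}{2}$ the position is below the threshold;
\item for $t'>\frac{p-1}{2}$ it is above;
\item for $t'=\frac{p-1}{2}$ it is below exactly when $\alpha<\frac{p^{k-1}-1}{2}$.
\end{itemize}
So the number of components with value $\frac{p-1}{2}(2p+1)$ is $\frac{p+1}{2}$ if $\alpha<\frac{p^{k-1}-1}{2}$, and $\frac{p-1}{2}$ otherwise. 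The component sum is then
\[
\frac{p-1}{2}\Big(p(2p-1)+2\cdot\#\{\text{small components}\}\Big).
\]
Adding the constants and simplifying should give $\frac{p-1}{2}(4p^2+2t\pm1+\cdots)$ in each case.

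\textbf{Main obstacle: the case split on $l$.} Two conditions interact:
\begin{itemize}
\item the position of $l$ relative to $\frac{p^k-1}{2}$, which decides between \eqref{less l} and \eqref{greater l};
\item the position of $\alpha=\lfloor l/p\rfloor$ relative to $\frac{p^{k-1}-1}{2}$, which decides the number of small components.
\end{itemize}
These two conditions differ only in a narrow window near $l=\frac{p-1}{2}\sum_{\iota=0}^{k-1}p^\iota$. That window is what produces the extra middle rows of the statement:
\begin{itemize}
\item the interval ending at $\frac{p-1}{2}\sum_{\iota=1}^{k-1}p^\iota-1$, which gets the $+p-1$ correction;
\item the interval starting at $\frac{p-1}{2}\sum_{\iota=1}^{k-1}p^{\iota}$;
\item the single point $l=j^k_{(p-1)/2}$, which is excluded from \eqref{less l}.
\end{itemize}

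I would first pin down explicitly which $\alpha$-range corresponds to each $l$-range. Here $\frac{p^{k-1}-1}{2}p=\frac{p-1}{2}\sum_{\iota=1}^{k-1}p^\iota$, which explains the interval endpoints. I would then compute the total in each of the six ranges separately, also checking the $t$-value assigned at the boundary points. The case $l=0$ uses \eqref{l0} with $t=0$, and gives $\frac{p-1}{2}(4p^2+1)$. The final check is to confirm the arithmetic against the $k=1$ values of Theorem \ref{p1}, where the middle windows degenerate.
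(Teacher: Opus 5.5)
Your plan is the paper's own proof. You get $s=1$ from Lemma \ref{2p-3}, and $s=2$ from \eqref{s21}/\eqref{s22} with every component equal to $\frac{p-1}{2}$. For $s=3$ you substitute the $s=2$ values into \eqref{l0}, \eqref{less l} and \eqref{greater l}. Your bookkeeping is cleaner than the paper's component-by-component counting: the number $c$ of components at a position $<\frac{p^k-1}{2}$ is $\frac{p+1}{2}$ or $\frac{p-1}{2}$ according as $\alpha<\frac{p^{k-1}-1}{2}$ or not. Placing the overlapping $s=2$ boundary $l=\frac{p^k-1}{2}$ in the second case, as in \eqref{s22}, is the right reading.

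The gap is that you stop at ``should give $\frac{p-1}{2}(4p^2+2t\pm1+\cdots)$''. Finishing your own computation gives, for $s=3$,
\begin{equation*}
\mathcal{M}=\tfrac{p-1}{2}\bigl(4p^{2}-p+2c+2t-2p\epsilon\bigr),
\end{equation*}
where $\epsilon=1$ exactly when \eqref{greater l} is used, and $t=0$ when $l=0$.

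This reproduces rows 1, 3, 4, 5 and 6 of the statement. For row 5 you must take $t=\frac{p-1}{2}$, as Corollary \ref{cor rules l}(2) dictates. The literal condition $j^k_{t-1}\le l$ in \eqref{greater l} would suggest $t=\frac{p+1}{2}$ instead.

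Row 2 ($p\ge5$, $1\le t\le\frac{p-3}{2}$) fails. There $l\le j^k_{(p-3)/2}<\frac{p^k-p}{2}$, so $\alpha<\frac{p^{k-1}-1}{2}$, giving $c=\frac{p+1}{2}$ and $\epsilon=0$. The value is therefore $\frac{p-1}{2}(4p^2+2t+1)$, not the stated $\frac{p-1}{2}(4p^2+2t-1)$. For example, $p=5$, $k=2$, $l=1$ gives $102+60+44=206$, not $202$.

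The paper's proof writes exactly this sum and mis-evaluates it. The $+1$ version is supported in three ways:
\begin{itemize}
\item Theorem \ref{less}(b)(ii) gives it at $s=3$.
\item Row 3 is precisely its $t=\frac{p-1}{2}$ instance.
\item It matches the paper's worked example value $210$ at $l=9$.
\end{itemize}
So row 2 cannot be proved as printed; you should state and prove the corrected value.

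Your proposed sanity check against $k=1$ (Theorem \ref{p1}) would mislead you here. For $k=1$ one has $\alpha=0=\frac{p^{k-1}-1}{2}$, which forces $c=\frac{p-1}{2}$; that is where the $-1$ comes from. For $k\ge2$, $c=\frac{p+1}{2}$ throughout rows 1--3.
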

	\begin{proof}
		 \textbf{Case 1: For $s=1$}
		
		By Lemma $\ref{2p-3}$, we have
		\begin{align}\label{sk1}
			\mathcal{M}\Big(\lambda\big(2k+2;(x_{(2,k)},x_{(1,k)}+l)\big)\Big) =\frac{p-1}{2},~\forall~0\leq l\leq p^{k}-1 
		\end{align}
		
		\textbf{Case 2: For $s=2$}
		
		By Lemma $\ref{2p-3}$, there is no descent at the $(2k+1)$-th block. Substituting the $p^{(k)}$-Fibonacci numbers from equation \eqref{sk1} into equation \eqref{s21} of Definition \ref{Fibo}, we obtain, for $0\leq l<\frac{p^{k}-1}{2},$
		\begin{equation}\label{4p-5 i(1)}
			\begin{aligned}
				\mathcal{M}\Big(\lambda\big(2k+4;(x_{(4,k)},x_{(2,k)}+l)\big)\Big) & = \sum_{t^{'}=0}^{p-1}\mathcal{M}\Big(\lambda\big(2k+2;(x_{(2,k)},x_{(1,k)}+(\alpha+p^{k-1}t^{'}))\big)\Big) \\ &\quad + \frac{p+1}{2}(p-1) 	=\frac{p-1}{2}\left(2p+1\right).
			\end{aligned}
		\end{equation}
		Similarly, by substituting the $p^{(k)}$-Fibonacci numbers from equation \eqref{sk1} into equation \ref{s22} of Definition \ref{Fibo}, we obtain, for $\frac{p^{k}-1}{2}\leq l \leq p^{k}-1,$
		\begin{equation}\label{4p-5 ii(1)}
		\begin{aligned}
			\mathcal{M}\Big(\lambda\big(2k+4;(x_{(4,k)},x_{(2,k)}+l)\big)\Big) & = \sum_{t^{'}=0}^{p-1}\mathcal{M}\Big(\lambda\big(2k+2;(x_{(2,k)},x_{(1,k)}+(\alpha+p^{k-1}t^{'}))\big)\Big) \\ & \quad + \frac{p-1}{2}(p-1) =\frac{p-1}{2}\left(2p-1\right).
		\end{aligned}
		\end{equation}
	
\textbf{Case 3: For $s=3$}
		
		We obtain the $p^{(k)}$-Fibonacci number at the vertex $\mathcal{M}\Big(\lambda\big(2k+6;(x_{(6,k)},x_{(3,k)}+0)\big)\Big)$ by substituting the $p^{(k)}$-Fibonacci numbers from equations \eqref{4p-5 i(1)} and \eqref{4p-5 ii(1)} into equation \eqref{l0} of Definition \ref{Fibo}. For $l=0$, we get 
		\begin{align*}
			\mathcal{M}\Big(\lambda\big(2k+6;(x_{(6,k)},x_{(3,k)}+0)\big)\Big) \\ &\hspace*{-4cm}=\left(\frac{p-1}{2}\right)\left(\frac{(p+1)(2p+1)}{2}+\frac{(p-1)(2p-1)}{2}+p(p+1)+p(p-1)\right) \\ &\hspace*{-4cm}=\left(\frac{p-1}{2}\right)(4p^{2}+1).\\
		\end{align*}
		
		Similarly, substituting the $p^{(k)}$-Fibonacci numbers from equations \eqref{4p-5 i(1)} and \eqref{4p-5 ii(1)} into equation \eqref{less l} of Definition \ref{Fibo}, we obtain, for $1\leq t\leq \frac{p-3}{2}$ and $l\in [(t-1)\sum\limits_{\iota=0}^{k-1}p^{\iota}+1,\ldots, t\sum\limits_{\iota=0}^{k-1}p^{\iota}]$
		\begin{align*}
			\mathcal{M}\Big(\lambda\big(2k+6;(x_{(6,k)},x_{(3,k)}+l)\big)\Big)\\ &\hspace*{-4cm}=\left(\frac{p-1}{2}\right)\left(\frac{(p+1)(2p+1)}{2}+\frac{(p-1)(2p-1)}{2}+p(p+1)+p(p-1)+2t\right) \\ &\hspace*{-4cm}=\left(\frac{p-1}{2}\right)(4p^{2}+2t-1).\\
		\end{align*}
		
		Next, substituting the $p^{(k)}$-Fibonacci numbers from equations \eqref{4p-5 i(1)} and \eqref{4p-5 ii(1)} into equation \eqref{less l} of Definition \ref{Fibo}, we obtain, for $l\in [\frac{p-3}{2}\sum\limits_{\iota=0}^{k-1}p^{\iota}+1,\ldots,\frac{p-1}{2}\sum\limits_{\iota=1}^{k-1}p^{\iota}-1],$
		\begin{align*}
		\mathcal{M}\Big(\lambda\big(2k+6;(x_{(6,k)},x_{(3,k)}+l)\big)\Big)\\ 	&\hspace*{-5.5cm}=\left(\frac{p-1}{2}\right)\left(\frac{(p+1)(2p+1)}{2}+\frac{(p-1)(2p-1)}{2}+p(p+1)+p(p-1)+2\left(\frac{p-1}{2}\right)\right) \\ &\hspace*{-5.5cm}=\left(\frac{p-1}{2}\right)(4p^{2}+1+p-1).\\ \\
		\end{align*}
		
		Similarly, substituting the $p^{(k)}$-Fibonacci numbers from equations \eqref{4p-5 i(1)} and \eqref{4p-5 ii(1)} into equation \eqref{less l} of Definition \ref{Fibo}, we obtain, for $l\in [\frac{p-1}{2}\sum\limits_{\iota=1}^{k-1}p^{\iota},\ldots,\frac{p-1}{2}\sum\limits_{\iota=0}^{k-1}p^{\iota}-1],$
		
		$\mathcal{M}\Big(\lambda\big(2k+6;(x_{(6,k)},x_{(3,k)}+l)\big)\Big)$
		\begin{align*}
			&=\left(\frac{p-1}{2}\right)\left(\frac{(p-1)(2p+1)}{2}+\frac{(p+1)(2p-1)}{2}+p(p+1)+p(p-1)+2\left(\frac{p-1}{2}\right)\right) \\ &=\left(\frac{p-1}{2}\right)(4p^{2}-1+p-1).
		\end{align*}
		
		Likewise, substituting the $p^{(k)}$-Fibonacci numbers from equations \eqref{4p-5 i(1)} and \eqref{4p-5 ii(1)} into equation \eqref{greater l} of Definition \ref{Fibo}, we obtain, for $l=\frac{p-1}{2}\sum\limits_{\iota=0}^{k-1}p^{\iota},$
		
		$\mathcal{M}\Big(\lambda\big(2k+6;(x_{(6,k)},x_{(3,k)}+l)\big)\Big)$
		\begin{align*}
			&=\left(\frac{p-1}{2}\right)\left(\frac{(p-1)(2p+1)}{2}+\frac{(p+1)(2p-1)}{2}+p(p-1)+p(p-1)+2\left(\frac{p-1}{2}\right)\right) \\ &=\left(\frac{p-1}{2}\right)(4p^{2}-1-p-1).
		\end{align*}
		
		Substituting the $p^{(k)}$-Fibonacci numbers from equations \eqref{4p-5 i(1)} and \eqref{4p-5 ii(1)} into equation \eqref{greater l} of Definition \ref{Fibo}, we obtain, for $\frac{p+1}{2}\leq t\leq p-1$ and $l\in [(t-1)\sum\limits_{\iota=0}^{k-1}p^{\iota}+1,\ldots,t\sum\limits_{\iota=0}^{k-1}p^{\iota}],$
		
		$\mathcal{M}\Big(\lambda\big(2k+6;(x_{(6,k)},x_{(3,k)}+l)\big)\Big)$
		\begin{align*}
			&=\left(\frac{p-1}{2}\right)\left(\frac{(p-1)(2p+1)}{2}+\frac{(p+1)(2p-1)}{2}+p(p-1)+p(p-1)+2t\right) \\&=\left(\frac{p-1}{2}\right)(4p^{2}-1+2t-2p).
		\end{align*}
		
	\end{proof}
	
	\begin{theorem} \label{less}
		For $2\leq k\leq r-1$ and $3\leq s< k+2$ $(\text{where}~r-k=s)$, the $p^{(k)}$-Fibonacci number at the vertex $\lambda\big(2r;(x_{(2s,k)},x_{(s,k)}+l)\big)$, where $0\leq l <p^{k}$ and $l$ corresponds to the position of the vertex in $V^{2r}_{k}$ is
		\begin{description}
			\item[(a)]  If $l=0$, then $$\mathcal{M}\Big(\lambda\big(2r;(x_{(2s,k)},x_{(s,k)}+l)\big)\Big) = \frac{p-1}{2}(2(s-1)p^{s-1}+1)$$
			\item[(b)] For $1\leq t\leq \frac{p-3}{2}$, 
			\begin{enumerate}
				\item[(i)] If $l\in\left[(t-1)\sum\limits_{\iota=i}^{k-1}p^{\iota}+p^{i},\ldots,(t-1)\sum\limits_{\iota=i+1}^{k-1}p^{\iota}+p^{i+1}-1\right],~0\leq i \leq s-4,~s\neq3,$
				then
				\begin{align*}
					\mathcal{M}\Big(\lambda\big(2r;(x_{(2s,k)},x_{(s,k)}+l)\big)\Big) \\&\hspace*{-3cm}=\frac{p-1}{2}\Big(2(s-1)p^{s-1}+1+2t\sum_{j=0}^{s-3}p^{j}
					-2\sum_{\mu=0}^{s-i-4}p^{\mu}\Big)
				\end{align*} 
				\item [(ii)] If $l\in\left[(t-1)\sum\limits_{\iota=s-3}^{k-1}p^{\iota}+p^{s-3},\ldots,t\sum\limits_{\iota=0}^{k-1}p^{\iota}\right]$, then 
				\begin{align*}
					\mathcal{M}\Big(\lambda\big(2r;(x_{(2s,k)},x_{(s,k)}+l)\big)\Big) = 	&\frac{p-1}{2}\Big(2(s-1)p^{s-1}+1+2t\sum_{j=0}^{s-3}p^{j}\Big)
				\end{align*}
			\end{enumerate} 
			\item [(c)] For $t=\frac{p-1}{2},$
			\begin{enumerate}
				\item [(i)] If $l\in\left[\frac{p-3}{2}\sum\limits_{\iota=i}^{k-1}p^{\iota}+p^{i},\ldots,\frac{p-3}{2}\sum\limits_{\iota=i+1}^{k-1}p^{\iota}+p^{i+1}-1\right],~0\leq i \leq s-4,~s\neq3,$
				then
				\begin{align*}
					\mathcal{M}\Big(\lambda\big(2r;(x_{(2s,k)},x_{(s,k)}+l)\big)\Big) \\ &\hspace*{-1.5cm}=\frac{p-1}{2}\Big(2(s-1)p^{s-1}+1+(p-1)\sum_{j=0}^{s-3}p^{j}
					-2\sum_{\mu=0}^{s-i-4}p^{\mu}\Big)
				\end{align*} 
				\item [(ii)]  If $l\in\left[\frac{p-3}{2}\sum\limits_{\iota=s-3}^{k-1}p^{\iota}+p^{s-3},\ldots,\frac{p-1}{2}\sum\limits_{\iota=s-2}^{k-1}p^{\iota}-1\right]$, then 
				$$\mathcal{M}\Big(\lambda\big(2r;(x_{(2s,k)},x_{(s,k)}+l)\big)\Big) = \frac{p-1}{2}(2(s-1)p^{s-1}+1+p^{s-2}-1)$$ \label{(d)}
				\item [(iii)] If $l\in\left[\frac{p-1}{2}\sum\limits_{\iota=i^{'}}^{k-1}p^{\iota},\ldots,\frac{p-1}{2} \sum\limits_{\iota=i^{'}-1}^{k-1}p^{\iota}-1\right],~1\leq i^{'}\leq s-2$, then 
				\begin{align*}
					\mathcal{M}\Big(\lambda\big(2r;(x_{(2s,k)},x_{(s,k)}+l)\big)\Big) \\
					 &\hspace*{-3.5cm}= \frac{p-1}{2}\Big(2(s-1)p^{s-1}-1-p^{s-2}-2\sum_{\theta = 1}^{s-3}p^{\theta}+2\sum_{\mu =s-i^{'}-1}^{s-2}p^{\mu}-1\Big)
				\end{align*}
				\label{(f)}
				\item [(iv)] If $l=\frac{p-1}{2} \sum\limits_{\iota=0}^{k-1}p^{\iota}$, then 
				\begin{align*}
					\hspace*{-3cm}\mathcal{M}\Big(\lambda\big(2r;(x_{(2s,k)},x_{(s,k)}+l)\big)\Big) \\&\hspace*{-2cm}= \frac{p-1}{2}\Big(2(s-1)p^{s-1}-1-p^{s-2}-2\sum_{\theta = 1}^{s-3}p^{\theta}-1\Big)
				\end{align*}
			\end{enumerate}
			
			\item [(d)] For $\frac{p+1}{2}\leq t\leq p-1,$
			\begin{enumerate}
				\item [(i)] 	If $l\in\left[(t-1)\sum\limits_{\iota=i}^{k-1}p^{\iota}+p^{i},\ldots,(t-1)\sum\limits_{\iota=i+1}^{k-1}p^{\iota}+p^{i+1}-1\right],~0\leq i \leq s-4,~s\neq3$, then
				
				$\mathcal{M}\Big(\lambda\big(2r;(x_{(2s,k)},x_{(s,k)}+l)\big)\Big)$
				\begin{align*}
					=&\frac{p-1}{2}\Big(2(s-1)p^{s-1}-1+(2t-2p)\sum_{j=0}^{s-3}p^{j}
					-2\sum_{\mu=0}^{s-i-4}p^{\mu}\Big)
				\end{align*} \label{(b)}
				\item [(ii)] If $l\in\left[(t-1)\sum\limits_{\iota=s-3}^{k-1}p^{\iota}+p^{s-3},\ldots,t\sum\limits_{\iota=0}^{k-1}p^{\iota}\right]$, then 
				\begin{align*}
					\mathcal{M}\Big(\lambda\big(2r;(x_{(2s,k)},x_{(s,k)}+l)\big)\Big) = 	 &\frac{p-1}{2}\Big(2(s-1)p^{s-1}-1+(2t-2p)\sum_{j=0}^{s-3}p^{j}
					\Big)
				\end{align*}\label{(c)}
			\end{enumerate}
		\end{description}
	\end{theorem}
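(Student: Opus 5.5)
Fix $k\ge 2$ and argue by induction on $s$, for $3\le s<k+2$. The base case $s=3$ is Theorem~\ref{base}, whose $s=3$ formulas agree with (a)--(d) of the statement for $s=3$; for $s=3$ the ranges in (b)(i), (c)(i), (d)(i) are empty. For the inductive step, assume the formulas hold at level $s-1$ (so at floor $2r-2$). Express $\mathcal{M}\big(\lambda(2r;(x_{(2s,k)},x_{(s,k)}+l))\big)$ through the recurrences \eqref{l0}, \eqref{less l}, \eqref{greater l} of Definition~\ref{Fibo}. Each of these is a sum over the $p$ components $\lambda\big(2r-2;(x_{(2s-2,k)},x_{(s-1,k)}+(\alpha+p^{k-1}t'))\big)$, $0\le t'\le p-1$, where $l=\alpha p+\beta$, plus the explicit boundary terms from Corollary~\ref{cor rules l}. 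Note that $\alpha$ is just $l$ with its lowest base-$p$ digit removed. The boundary terms depend only on whether $l<\frac{p^k-1}{2}$ and on the $t$ with $j^k_{t-1}<l\le j^k_t$.

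The key combinatorial step is to find which case of the inductive formula each component position $\alpha+p^{k-1}t'$ falls into. For $t'<\frac{p-1}{2}$ the position lies below $\frac{p^k-1}{2}$ with leading digit $t'$. For $t'>\frac{p-1}{2}$ it lies above. For $t'=\frac{p-1}{2}$ the position is of the form $\frac{p-1}{2}p^{k-1}+\alpha$, and the finer subcases (c)(i)--(iv) at level $s-1$ decide its value by comparing $\alpha$ with $\frac{p-1}{2}\sum p^\iota$ truncated at various heights.

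I would encode everything in the base-$p$ digits $(t_{k-1},\dots,t_0)$ of $l$. The interval $\big[(t-1)\sum_{\iota=i}^{k-1}p^\iota+p^i,\ \dots\big]$ says precisely that the top digits of $l$ equal $t-1$ down to position $i+1$, and that digit $i$ exceeds $t-1$. Passing from $l$ to $\alpha$ shifts the digits down by one, which turns index $i$ into $i-1$ and $s$ into $s-1$. This is exactly why $\sum_{\mu=0}^{s-i-4}p^\mu$ at level $s$ comes from $\sum_{\mu=0}^{(s-1)-(i-1)-4}p^\mu$ at level $s-1$, multiplied by $p$ and added to the new contribution.

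With this in hand, the sum over $t'$ splits into $\frac{p-1}{2}$ components of type (b) (with $t'$ in the role of $t$), one middle component of type (c), and $\frac{p-1}{2}$ components of type (d). The arithmetic sums $\sum_{t'}2t'$ and $\sum_{t'}(2t'-2p)$ are evaluated in closed form. Adding the boundary terms $p^{s-2}\frac{p^2-1}{2}$ (or $p^{s-2}\frac{(p-1)^2}{2}$) and $p^{s-3}(p-1)\big(\frac{p(p-1)}{2}+t\big)$ should then produce the leading term $2(s-1)p^{s-1}$ from $2(s-2)p^{s-2}$. It should also produce the geometric sums: $2t\sum_{j=0}^{s-3}p^j=p\cdot 2t\sum_{j=0}^{s-4}p^j+2t$ records the contribution of the middle component plus the new $t$ term. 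This is the same telescoping seen in the proof of Theorem~\ref{p1}, now carried out for each of the cases (a)--(d) separately.

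The main obstacle is the bookkeeping of the middle component $t'=\frac{p-1}{2}$ and of the boundary intervals. I must check that, when $l$ lies in a given interval at level $s$, the middle component's position $\frac{p-1}{2}p^{k-1}+\alpha$ lies in the matching interval at level $s-1$ (cases (c)(i)--(iv) with shifted index $i$ or $i'$). The other components should contribute uniformly, since their case depends only on $t'$ and on whether the truncated $\alpha$ crosses the thresholds. I would first verify this digit correspondence as a separate lemma. I would then check the cases (c)(iii) and (c)(iv), where the sums $\sum_{\mu=s-i'-1}^{s-2}p^\mu$ arise, by explicit recomputation at $s=4$ before doing the general induction.
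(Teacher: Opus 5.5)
Your skeleton matches the paper's: induction on $s$, the recurrences \eqref{l0}, \eqref{less l}, \eqref{greater l}, and locating each component $q_{t'}=\alpha+p^{k-1}t'$ in the level-$(s-1)$ case partition. However, two steps fail as stated. The first is the base case: Theorem~\ref{base} does \emph{not} agree with (b)(ii) at $s=3$. For $l\in[(t-1)\sum_{\iota=0}^{k-1}p^{\iota}+1,\ldots,t\sum_{\iota=0}^{k-1}p^{\iota}]$ with $1\le t\le\frac{p-3}{2}$, it gives $\frac{p-1}{2}(4p^{2}+2t-1)$. But (b)(ii) with $s=3$ reads $\frac{p-1}{2}(4p^{2}+2t+1)$. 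Evaluating \eqref{less l} directly gives $\frac{p-1}{2}\bigl(\frac{(p+1)(2p+1)+(p-1)(2p-1)}{2}+p(p+1)+p(p-1)+2t\bigr)=\frac{p-1}{2}(4p^{2}+2t+1)$. For example, $p=5$, $k=2$, $s=3$, $l=3$ gives $102+60+44=206$, not $202$. So the statement is right and Theorem~\ref{base} has an arithmetic slip in that case; the paper's proof, which cites it, has the same gap. You must compute this base case yourself, or start the induction at $s=2$.

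The second and more serious problem is your classification of the components, which is exactly where the $t$-dependence comes from. The component $q_{t'}$ does not have ``$t'$ in the role of $t$''. It lies in class $t'$ (i.e.\ $j^{k}_{t'-1}<q_{t'}\le j^{k}_{t'}$) only when $\alpha\le j^{k-1}_{t'}$, and in class $t'+1$ otherwise. For $l\in(j^{k}_{t-1},j^{k}_{t}]$, every component with $t'\le t-2$ is bumped, as in the paper's \eqref{1st}. The component $t'=t-1$ is bumped unless $\alpha=j^{k-1}_{t-1}$.

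The block-$(2r-3)$ term $p^{s-3}(p-1)\bigl(\frac{p(p-1)}{2}+t\bigr)$ of \eqref{less l} contributes only $\frac{p-1}{2}\cdot 2tp^{s-3}$ to the $t$-dependence. In case (b)(ii), the remaining $2t\sum_{j=0}^{s-4}p^{j}$ comes from the $t$ bumped components, each carrying an extra $2\sum_{j=0}^{s-4}p^{j}$. Your split $p\cdot 2t\sum_{j=0}^{s-4}p^{j}+2t$ ``from the middle component'' matches nothing in the recurrence. With your classification the component sum would not depend on $t$ at all. For instance, with $p=5$, $k=3$, $s=4$, $l=40$, the components $q_0=8$ and $q_1=33$ lie in classes $1$ and $2$; treating them as classes $0$ and $1$ gives $771$ instead of the correct $775$.

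The index shift $i\mapsto i-1$ is likewise carried by the single component $t'=t-1$. It lands in (b)(i), (c)(i) or (d)(i) at level $s-1$, or equals $j^{k}_{t-1}$ when $i=0$. So $-2\sum_{\mu=0}^{s-i-4}p^{\mu}$ is passed on unchanged, not multiplied by $p$. The middle component $t'=\frac{p-1}{2}$ behaves as follows:
\begin{itemize}
\item it lies in (c)(ii) at level $s-1$ whenever $l$ is in (b), (c)(i) or (c)(ii);
\item it falls in (c)(iii) (with $i'\mapsto i'-1$) or (c)(iv) only when $l$ is in (c)(iii) or (c)(iv);
\item it lies in (d) or (c)(iv) when $l$ is in (d).
\end{itemize}
In case (d), the $t'=\frac{p-3}{2}$ component is also of type (c), not (b).

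The digit lemma you want is the following. Let $\epsilon_h(l)=1$ if $\lfloor l/p^{h}\rfloor\ge\frac{p^{k-h}-1}{2}$ and $0$ otherwise. Let $\tau_h(l)$ be the least $\tau$ with $\lfloor l/p^{h}\rfloor\le j^{k-h}_{\tau}$. Then for $h\le k-2$,
\[
\sum_{t'=0}^{p-1}\epsilon_h(q_{t'})=\tfrac{p-1}{2}+\epsilon_{h+1}(l),\qquad \sum_{t'=0}^{p-1}\tau_h(q_{t'})=\tfrac{p(p-1)}{2}+\tau_{h+1}(l).
\]
Induction from $s=2$ then gives, for $2\le s\le k+1$,
\[
\mathcal{M}\Big(\lambda\big(2r;(x_{(2s,k)},x_{(s,k)}+l)\big)\Big)=\frac{p-1}{2}\Big(2(s-1)p^{s-1}+1-2\sum_{h=0}^{s-2}p^{s-2-h}\epsilon_h(l)+2\sum_{h=0}^{s-3}p^{s-3-h}\tau_h(l)\Big).
\]
Each of (a)--(d) can then be read off from the base-$p$ digits of $l$.
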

	\begin{proof}
		We prove the theorem by mathematical induction on $s$.
		
		For $s=3$, the given formula holds by Theorem \ref{base}.
		
		By the induction hypothesis, we assume that the theorem holds for $s-1$, where $3\leq s-1<k+1;$ that is, 
		\begin{enumerate}
			\item  When $l=0$, we have $$\mathcal{M}\Big(\lambda\big(2(r-1);(x_{(2(s-1),k)},x_{(s-1,k)})\big)\Big) = \frac{p-1}{2}(2(s-2)p^{s-2}+1)$$
			
			\item (i) For $l\in\left[(t-1)\sum\limits_{\iota=i}^{k-1}p^{\iota}+p^{i},\ldots,(t-1)\sum\limits_{\iota=i+1}^{k-1}p^{\iota}+p^{i+1}-1\right],~0\leq i \leq s-5,~s-1\neq3$, $1\leq t\leq \frac{p-3}{2}$, we have
			\begin{align*}
				\mathcal{M}\Big(\lambda\big(2(r-1);(x_{(2(s-1),k)},x_{(s-1,k)}+l)\big)\Big) \\ &\hspace*{-4.5cm}=\frac{p-1}{2}\Big(2(s-2)p^{s-2}+1+2t\sum_{j=0}^{s-4}p^{j}
				-2\sum_{\mu=0}^{s-i-5}p^{\mu}\Big)
			\end{align*} 
			(ii) For $l\in\left[(t-1)\sum\limits_{\iota=s-4}^{k-1}p^{\iota}+p^{s-4},\ldots,t\sum\limits_{\iota=0}^{k-1}p^{\iota}\right],~ 1\leq t\leq \frac{p-3}{2}$, we have
			\begin{align*}
				\mathcal{M}\Big(\lambda\big(2(r-1);(x_{(2(s-1),k)},x_{(s-1,k)}+l)\big)\Big) = 	&\frac{p-1}{2}\Big(2(s-2)p^{s-2}+1+2t\sum_{j=0}^{s-4}p^{j}\Big)
			\end{align*}
			
			\item For $t=\frac{p-1}{2}:$
			\begin{enumerate}
				\item For $l\in\left[\frac{p-3}{2}\sum\limits_{\iota=i}^{k-1}p^{\iota}+p^{i},\ldots,\frac{p-3}{2}\sum\limits_{\iota=i+1}^{k-1}p^{\iota}+p^{i+1}-1\right],~0\leq i \leq s-5,~s-1\neq3$, we have
				
				$\mathcal{M}\Big(\lambda\big(2(r-1);(x_{(2(s-1),k)},x_{(s-1,k)}+l)\big)\Big)$
				\begin{align*}
					 =&\frac{p-1}{2}\Big(2(s-2)p^{s-2}+1+(p-1)\sum_{j=0}^{s-4}p^{j}
					-2\sum_{\mu=0}^{s-i-5}p^{\mu}\Big)\\
				\end{align*}
				\item For  $l\in\left[\frac{p-3}{2}\sum\limits_{\iota=s-4}^{k-1}p^{\iota}+p^{s-4},\ldots,\frac{p-1}{2}\sum\limits_{\iota=s-3}^{k-1}p^{\iota}-1\right]$, we have 
				$$\mathcal{M}\Big(\lambda\big(2(r-1);(x_{(2(s-1),k)},x_{(s-1,k)}+l)\big)\Big) = \frac{p-1}{2}(2(s-2)p^{s-2}+1+p^{s-3}-1)$$
				\item If $l\in\left[\frac{p-1}{2}\sum\limits_{\iota=i^{'}}^{k-1}p^{\iota},\ldots,\frac{p-1}{2} \sum\limits_{\iota=i^{'}-1}^{k-1}p^{\iota}-1\right],~1\leq i^{'}\leq s-3$, then 
				
				$\mathcal{M}\Big(\lambda\big(2(r-1);(x_{(2(s-1),k)},x_{(s-1,k)}+l)\big)\Big)$ 
				\begin{align*}
					&= \frac{p-1}{2}\Big(2(s-2)p^{s-2}-1-p^{s-3}-2\sum_{\theta = 1}^{s-4}p^{\theta}+2\sum_{\mu =s-i^{'}-2}^{s-3}p^{\mu}-1\Big)
				\end{align*}
				\item If $l=\frac{p-1}{2} \sum\limits_{\iota=0}^{k-1}p^{\iota}$, then 
				
				$\mathcal{M}\Big(\lambda\big(2(r-1);(x_{(2(s-1),k)},x_{(s-1,k)}+l)\big)\Big)$  
				\begin{align*}
					= \frac{p-1}{2}\Big(2(s-2)p^{s-2}-1-p^{s-3}-2\sum_{\theta = 1}^{s-4}p^{\theta}-1\Big)
				\end{align*}
			\end{enumerate}
			\item For $\frac{p+1}{2}\leq t\leq p-1:$
			\begin{enumerate}[label=(\roman*)]
				\item For $l\in\left[(t-1)\sum\limits_{\iota=i}^{k-1}p^{\iota}+p^{i},\ldots,(t-1)\sum\limits_{\iota=i+1}^{k-1}p^{\iota}+p^{i+1}-1\right],~0\leq i \leq s-5,~s-1\neq3$, we have
				
				$\mathcal{M}\Big(\lambda\big(2(r-1);(x_{(2(s-1),k)},x_{(s-1,k)}+l)\big)\Big)$
				\begin{align*}
					=&\frac{p-1}{2}\Big(2(s-2)p^{s-2}-1+(2t-2p)\sum_{j=0}^{s-4}p^{j}
					-2\sum_{\mu=0}^{s-i-5}p^{\mu}\Big)
				\end{align*} 
				\item For $l\in\left[(t-1)\sum\limits_{\iota=s-4}^{k-1}p^{\iota}+p^{s-4},\ldots,t\sum\limits_{\iota=0}^{k-1}p^{\iota}\right]$, we have
				
				$\mathcal{M}\Big(\lambda\big(2(r-1);(x_{(2(s-1),k)},x_{(s-1,k)}+l)\big)\Big)$
				\begin{align*}
					 = 	 &\frac{p-1}{2}\Big(2(s-2)p^{s-2}-1+(2t-2p)\sum_{j=0}^{s-4}p^{j}
					\Big)
				\end{align*}
			\end{enumerate}
		\end{enumerate}
		
		We now prove that the theorem holds for $s$. 
		\begin{description}
			\item[(a)] When $l=0$, by Definition \ref{comp}, the components of the vertex $\lambda\big(2r;(x_{(2s,k)},x_{(s,k)}+0)\big)$ are given by $\lambda\big(2(r-1);(x_{(2(s-1),k)},x_{(s-1,k)}+p^{k-1}t^{'})\big)$, where $0\leq t^{'}\leq p-1$. Using equation \eqref{l0} of Definition \ref{Fibo} and substituting the values of $\mathcal{M}\Big(\lambda\big(2(r-1);(x_{(2(s-1),k)},x_{(s-1,k)}+p^{k-1}t^{'})\big)\Big)$, where $0\leq t^{'}\leq p-1$, we obtain
			$$\mathcal{M}\Big(\lambda\big(2r;(x_{(2s,k)},x_{(s,k)})\big)\Big)	= \frac{p-1}{2}\left(2(s-1)\cdot p^{s-1}+1\right) $$
			where
			
			$\mathcal{M}\Big(\lambda\big(2(r-1);(x_{(2(s-1),k)},x_{(s-1,k)}+p^{k-1}t^{'})\big)\Big)$ 
			\begin{displaymath}
				=
				\begin{cases}
					\frac{p-1}{2}\left(2(s-2)p^{s-2}+1\right), & \text{when $t^{'}=0$;}\\
					\frac{p-1}{2}\big(2(s-2)p^{s-2}+1+2t^{'}\sum\limits_{j=0}^{s-4}p^{j}\big) , & \text{when $1\leq t^{'}\leq \frac{p-3}{2}$;} \\	\frac{p-1}{2}\left(2(s-2)p^{s-2}+1+p^{s-3}-1\right), & \text{when  $t^{'} = \frac{p-1}{2}$;} \\ \frac{p-1}{2}\big(2(s-2)p^{s-2}+1+(2t^{'}-2p)\sum\limits_{j=0}^{s-4}p^{j}\big), & \text{when $\frac{p+1}{2}\leq t^{'}\leq p-1$.}
				\end{cases}	
			\end{displaymath}
			\item[(b)]\label{bi}
			\begin{enumerate}
				\item [(i)] Let $l\in\left[(t-1)\sum\limits_{\iota=i}^{k-1}p^{\iota}+p^{i},\ldots,(t-1)\sum\limits_{\iota=i+1}^{k-1}p^{\iota}+p^{i+1}-1\right]$, where $0\leq i \leq s-4,~s\neq3$, and $1\leq t\leq \frac{p-3}{2}$. Then we compute the $p^{(k)}$-Fibonacci number at the vertex $\lambda\big(2r;(x_{(2s,k)},x_{(s,k)}+l)\big)$ as follows:
				
				For $l\in [1,\ldots,p-1]$, the components of the vertex $\lambda\big(2r;(x_{(2s,k)},x_{(s,k)}+l)\big)$ are the same as those of the vertex $\lambda\big(2r;(x_{(2s,k)},x_{(s,k)}+0)\big)$. Substituting the values of the $p^{(k)}$-Fibonacci numbers of the corresponding components into equation \eqref{less l} of Definition \ref{Fibo}, we obtain
				\begin{align*}
					\hspace*{-1cm}\mathcal{M}\Big(\lambda\big(2r;(x_{(2s,k)},x_{(s,k)}+l)\big)\Big)	\\ &\hspace*{-2cm}=  \frac{p-1}{2}\Big(2(s-1)\cdot p^{s-1}+1+2\sum\limits_{j=0}^{s-3}p^{j}-2\sum\limits_{\mu=0}^{s-4}p^{j}\Big)
				\end{align*}
				Next, for $l\in\left[(t-1)\sum\limits_{\iota=i}^{k-1}p^{\iota}+p^{i},\ldots,(t-1)\sum\limits_{\iota=i+1}^{k-1}p^{\iota}+p^{i+1}-1\right],~0\leq i \leq s-4$, $1\leq t\leq \frac{p-3}{2}$ with the additional condition $i\neq 0$ when $t=1$, the $p^{(k)}$-Fibonacci numbers of the components are given as follows:
				
				Let $l=(t-1)\sum\limits_{\iota=i}^{k-1}p^{\iota}+p^{i}$. Then by Definition \ref{comp} the integers corresponding to the components are 
				\begin{align}\label{proj}
					\alpha+p^{k-1}t^{'} = (t-1)\sum\limits_{\iota=i-1}^{k-2}p^{\iota}+p^{i-1}+ p^{k-1}t^{'}
				\end{align}
				where $\alpha = (t-1)\sum\limits_{\iota=i-1}^{k-2}p^{\iota}+p^{i-1}$.
				
				When $0\leq t^{'}\leq t-2$, the equation \eqref{proj} can be rewritten as
				\begin{align}
					\alpha+p^{k-1}t^{'} = t^{'}\sum\limits_{\iota=i-1}^{k-1}p^{\iota}+p^{i-1}+ (t-1-t^{'})\sum\limits_{\iota=i-1}^{k-2}p^{\iota}
				\end{align} 
				Observe that $$(p-t^{'})\sum_{\iota=i-1}^{s-5}p^{\iota}<(t-1-t^{'})\sum\limits_{\iota=i-1}^{k-2}p^{\iota}<(p-t^{'})\sum_{\iota=i-1}^{s-5}p^{\iota}+t^{'}\sum_{\iota=0}^{s-5}p^{\iota}+\sum_{\iota=s-5}^{k-1}p^{\iota}$$
				where $(p-t^{'})\sum\limits_{\iota=i-1}^{s-5}p^{\iota}$ is the sum of the length of the intervals $$\left[t^{'}\sum\limits_{\iota = j-1}^{k-1}p^{\iota}+p^{j-1},\ldots,t^{'}\sum\limits_{\iota = j}^{k-1}p^{\iota}+p^{j}\right],~i\leq j \leq s-4$$ and $t^{'}\sum\limits_{\iota=0}^{s-5}p^{\iota}+\sum\limits_{\iota=s-5}^{k-1}p^{\iota}$ is the length of the interval $$\left[t^{'}\sum\limits_{\iota = s-4}^{k-1}p^{\iota}+p^{s-4},\ldots,(t^{'}+1)\sum\limits_{\iota = 0}^{k-1}p^{\iota}\right].$$
				
				Therefore the position $\alpha+p^{k-1}t^{'}$ corresponding to the component is in the interval $$\left[t^{'}\sum\limits_{\iota = s-4}^{k-1}p^{\iota}+p^{s-4},\ldots,(t^{'}+1)\sum\limits_{\iota = 0}^{k-1}p^{\iota}\right]$$ and its corresponding $p^{(k)}$-Fibonacci number for $0\leq t^{'}\leq t-2$ is 
				
				$			\mathcal{M}\Big(\lambda\big(2(r-1);(x_{(2(s-1),k)},x_{(s-1,k)}+(\alpha+p^{k-1}t^{'}))\big)\Big) $ 
				\begin{align}\label{1st}
		& = \frac{p-1}{2}\big(2(s-2)p^{s-2}+1+2(t^{'}+1)\sum\limits_{j=0}^{s-4}p^{j}\big).
				\end{align}
				When $ t^{'}= t-1$, equation \eqref{proj} can be rewritten as
				\begin{align}
					\alpha+p^{k-1}(t-1) = (t-1)\sum\limits_{\iota=i-1}^{k-1}p^{\iota}+p^{i-1}
				\end{align} 
				The position $\alpha+p^{k-1}(t-1)$ corresponding to the component 
				lies in the interval $$\left[(t-1)\sum\limits_{\iota = i-1}^{k-1}p^{\iota}+p^{i-1},\ldots,(t-1)\sum\limits_{\iota = i}^{k-1}p^{\iota}+p^{i}\right].$$ Hence the corresponding $p^{(k)}$-Fibonacci number for $t^{'}= t-1$ is  
				
				$					\mathcal{M}\Big(\lambda\big(2(r-1);(x_{(2(s-1),k)},x_{(s-1,k)}+(\alpha+p^{k-1}t^{'}))\big)\Big)$
				\begin{align}\label{2nd}
 & = \frac{p-1}{2}\Big(2(s-2)p^{s-2}+1+2t\sum\limits_{j=0}^{s-4}p^{j}-2\sum\limits_{\mu=0}^{s-i-4}p^{\mu}\Big).
				\end{align}
				Next, when $t\leq t^{'}\leq\frac{p-3}{2}$ and $\frac{p+1}{2}\leq t^{'}\leq p-1$, equation \eqref{proj} can be rewritten as
				\begin{align}
					\alpha+p^{k-1}t^{'} = t^{'}\sum\limits_{\iota=i-1}^{k-1}p^{\iota}+p^{i-1}- (t^{'}-(t-1))\sum\limits_{\iota=i-1}^{k-2}p^{\iota}
				\end{align} 
				Observe that $$(p-t^{'})\sum_{\iota=0}^{i-2}p^{\iota}<(t^{'}-(t-1))\sum\limits_{\iota=i-1}^{k-2}p^{\iota}<(p-t^{'})\sum_{\iota=0}^{i-2}p^{\iota}+(t^{'}-1)\sum_{\iota=0}^{s-5}p^{\iota}+\sum_{\iota=s-5}^{k-1}p^{\iota}$$
				where $(p-t^{'})\sum\limits_{\iota=i-1}^{s-5}p^{\iota}$ is the sum of the length of the intervals $$\left[t^{'}\sum\limits_{\iota = j-1}^{k-1}p^{\iota}+p^{j-1},\ldots,t^{'}\sum\limits_{\iota = j}^{k-1}p^{\iota}+p^{j}\right],~1\leq j \leq i-1$$ and $(t^{'}-1)\sum\limits_{\iota=0}^{s-5}p^{\iota}+\sum\limits_{\iota=s-5}^{k-1}p^{\iota}$ is the length of the interval $$\left[(t^{'}-1)\sum\limits_{\iota = s-4}^{k-1}p^{\iota}+p^{s-4},\ldots,t^{'}\sum\limits_{\iota = 0}^{k-1}p^{\iota}\right].$$
				
				Therefore, the position $\alpha+p^{k-1}t^{'}$ corresponding to the component is in the interval $$\left[(t^{'}-1)\sum\limits_{\iota = s-4}^{k-1}p^{\iota}+p^{s-4},\ldots,t^{'}\sum\limits_{\iota = 0}^{k-1}p^{\iota}\right].$$ Hence, for $t\leq t^{'}\leq\frac{p-3}{2}$, the corresponding $p^{(k)}$-Fibonacci number is  
				
				$		\mathcal{M}\Big(\lambda\big(2(r-1);(x_{(2(s-1),k)},x_{(s-1,k)}+(\alpha+p^{k-1}t^{'}))\big)\Big) $
				\begin{align}\label{4thi}
			& = \frac{p-1}{2}\Big(2(s-2)p^{s-2}+1+2t^{'}\sum\limits_{j=0}^{s-4}p^{j}\Big).
				\end{align}
				and for $\frac{p-1}{2}\leq t^{'}\leq p-1$, is
				
				$\mathcal{M}\Big(\lambda\big(2(r-1);(x_{(2(s-1),k)},x_{(s-1,k)}+(\alpha+p^{k-1}t^{'}))\big)\Big)$
				\begin{align}\label{4thii}
					 & = \frac{p-1}{2}\Big(2(s-2)p^{s-2}+1+(2t^{'}-2p)\sum\limits_{j=0}^{s-4}p^{j}\Big).
				\end{align}
				When $t^{'}=\frac{p-1}{2}$, equation \eqref{proj} can be rewritten as
				\begin{align*}
					\alpha+p^{k-1}t^{'} = (t-1)\sum\limits_{\iota=i-1}^{k-2}p^{\iota}+p^{i-1}+ \left(\frac{p-1}{2}\right)p^{k-1}
				\end{align*} 
				Observe that $$\left(\frac{p-1}{2}\right)p^{k-1}\in \left[\left(\frac{p-3}{2}\right)\sum\limits_{\iota = s-4}^{k-1}p^{\iota}+p^{s-4},\ldots,\left(\frac{p-1}{2}\right)\sum\limits_{\iota = s-3}^{k-1}p^{\iota}-1\right].$$ Moreover, the difference between $\left(\frac{p-1}{2}\right)p^{k-1}$ and $\left(\frac{p-1}{2}\right)\sum\limits_{\iota = s-3}^{k-1}p^{\iota}$ is $\left(\frac{p-1}{2}\right)\sum\limits_{\iota = s-3}^{k-2}p^{\iota}$ which is greater than $(t-1)\sum\limits_{\iota=i-1}^{k-2}p^{\iota}+p^{i-1}$.  
				
				Therefore the position $\alpha+p^{k-1}\left(\frac{p-1}{2}\right)$ corresponding to the component is in the interval $$\left[\left(\frac{p-3}{2}\right)\sum\limits_{\iota = s-4}^{k-1}p^{\iota}+p^{s-4},\ldots,\left(\frac{p-1}{2}\right)\sum\limits_{\iota = s-3}^{k-1}p^{\iota}-1\right]$$ and its corresponding $p^{(k)}$-Fibonacci number for $t^{'}=\left(\frac{p-1}{2}\right)$ is  
			\begin{equation}\label{3rd}
					\begin{aligned}
					\mathcal{M}\Big(\lambda\big(2(r-1);(x_{(2(s-1),k)},x_{(s-1,k)}+(\alpha+p^{k-1}t^{'}))\big)\Big) \\& \hspace*{-3cm}= \frac{p-1}{2}\big(2(s-2)p^{s-2}+1+p^{s-3}-1\big).
				\end{aligned}
			\end{equation}
				Substituting the $p^{(k)}$-Fibonacci numbers from equations \eqref{1st}, \eqref{2nd}, \eqref{4thi}, \eqref{4thii}, and \eqref{3rd} into equation \eqref{less l} of Definition \ref{Fibo}, we obtain 
				\begin{align*}
					\mathcal{M}\Big(\lambda\big(2r;(x_{(2s,k)},x_{(s,k)}+l)\big)\Big)\\ &\hspace*{-3cm}=\frac{p-1}{2}\Big(2(s-1)\cdot p^{s-1}+1+2t\sum_{j=0}^{s-3}p^{j}-2\sum_{\mu=0}^{s-i-4}p^{\mu}\Big)
				\end{align*}	
				\item [(ii)] Let $l\in\left[(t-1)\sum\limits_{\iota=s-3}^{k-1}p^{\iota}+p^{s-3},\ldots,t\sum\limits_{\iota=0}^{k-1}p^{\iota}\right]$, $1\leq t\leq \frac{p-3}{2}$. The $p^{(k)}$-Fibonacci numbers of the components are the same as those given in equations \eqref{1st}, \eqref{4thi}, \eqref{4thii}, \eqref{3rd} except for the case $t^{'}=t-1$. The $p^{(k)}$-Fibonacci number of the component corresponding to the position $\alpha+p^{k-1}t^{'}$ with $t^{'}=t-1$ is,
				
				$	\mathcal{M}\Big(\lambda\big(2(r-1);(x_{(2(s-1),k)},x_{(s-1,k)}+(\alpha+p^{k-1}(t-1)))\big)\Big)$
				\begin{align}\label{sp2}
				&= \frac{p-1}{2}\big(2(s-2)p^{s-2}+1+2t\sum\limits_{j=0}^{s-4}p^{j}\big).
				\end{align}
				From equations (\ref{2nd}) and (\ref{sp2}), it follows that the interval $$\left[(t-1)\sum\limits_{\iota=s-4}^{k-1}p^{\iota}+p^{s-4},\ldots,t\sum\limits_{\iota=0}^{k-1}p^{\iota}\right]$$ on the $2(r-1)$-th floor is divided into the two subintervals $$\left[(t-1)\sum\limits_{\iota=s-4}^{k-1}p^{\iota}+p^{s-4},\ldots,(t-1)\sum\limits_{\iota=s-3}^{k-1}p^{\iota}+p^{s-3}-1\right]$$ and $$ \left[(t-1)\sum\limits_{\iota=s-3}^{k-1}p^{\iota}+p^{s-3},\ldots,t\sum\limits_{\iota=0}^{k-1}p^{\iota}\right]$$ for each $1\leq t\leq \frac{p-3}{2}$, since the $p^{(k)}$-Fibonacci number of the component varies at the position $\alpha +p^{k-1}(t-1)$.
				
				Substituting the $p^{(k)}$-Fibonacci numbers from equations \eqref{1st}, \eqref{4thi}, \eqref{4thii}, \eqref{3rd}, and \eqref{sp2} into equation \eqref{less l} of Definition \ref{Fibo}, we obtain 
				$$	\mathcal{M}\Big(\lambda\big(2r;(x_{(2s,k)},x_{(s,k)}+l)\big)\Big)=\frac{p-1}{2}\Big(2(s-1)\cdot p^{s-1}+1+2t\sum\limits_{j=0}^{s-3}p^{j}\Big)$$		
			\end{enumerate} 
			\item[(c)] 
			\begin{enumerate}
				\item [(i)] Let $l\in\left[\frac{p-3}{2}\sum\limits_{\iota=i}^{k-1}p^{\iota}+p^{i},\ldots,\frac{p-3}{2}\sum\limits_{\iota=i+1}^{k-1}p^{\iota}+p^{i+1}-1\right],~0\leq i \leq s-4,~s\neq3$. The $p^{(k)}$-Fibonacci numbers of the components are the same as those in Case \textbf{(b)}(i), by taking $t=\frac{p-3}{2}$. Substituting the corresponding values of the $p^{(k)}$-Fibonacci numbers into equation \eqref{less l} of Definition \ref{Fibo}, we obtain 
				\begin{align*}
					\mathcal{M}\Big(\lambda\big(2r;(x_{(2s,k)},x_{(s,k)}+l)\big)\Big) \\&\hspace*{-3cm}=\frac{p-1}{2}\Big(2(s-1)p^{s-1}+1+(p-1)\sum_{j=0}^{s-3}p^{j}
					-2\sum_{\mu=0}^{s-i-4}p^{\mu}\Big)
				\end{align*} 
				In particular, when $t^{'}=\frac{p-1}{2}$, the position $\alpha+p^{k-1}\left(\frac{p-1}{2}\right)$ corresponding to the component is in the interval $$\left[\frac{p-3}{2}\sum\limits_{\iota = s-4}^{k-1}p^{\iota}+p^{s-4},\ldots,\frac{p-1}{2}\sum\limits_{\iota = s-3}^{k-1}p^{\iota}-1\right]$$ where $\alpha = \frac{p-3}{2}\sum\limits_{\iota=i-1}^{k-2}p^{\iota}+p^{i-1}$ and its corresponding $p^{(k)}$-Fibonacci number is 
				\begin{equation}\label{c0}
				\begin{aligned}
					\mathcal{M}\Bigg(\lambda\Big(2(r-1);\Big(x_{(2(s-1),k)},x_{(s-1,k)}+\Big(\alpha+\Big(\frac{p-1}{2}\Big)p^{k-1}\Big)\Big)\Big)\Bigg)\\ &\hspace*{-6cm}=\frac{p-1}{2}\left(2(s-2)p^{s-2}+1+p^{s-3}-1\right).
				\end{aligned}
				\end{equation}
				\item [(ii)] Let	$l\in\left[\frac{p-3}{2}\sum\limits_{\iota=s-3}^{k-1}p^{\iota}+p^{s-3},\ldots,\frac{p-1}{2}\sum\limits_{\iota=s-2}^{k-1}p^{\iota}-1\right]$. The components have the same $p^{(k)}$-Fibonacci numbers as those in Case \textbf{(b)}(i), except for the case $t^{'}=\frac{p-1}{2}$. Let $l=\frac{p-3}{2}\sum\limits_{\iota=s-3}^{k-1}p^{\iota}+p^{s-3}$. Then by Definition \ref{comp}, the position corresponding to the component is  
				\begin{align*}
					\alpha+p^{k-1}\left(\frac{p-3}{2}\right) = \frac{p-3}{2}\sum\limits_{\iota=s-4}^{k-1}p^{\iota}+p^{s-4} 
				\end{align*}
				where $\alpha = \frac{p-3}{2}\sum\limits_{\iota=s-4}^{k-1}p^{\iota}$.
				
				Hence, the position $\alpha+p^{k-1}\left(\frac{p-1}{2}\right)$ corresponding to the component is in the interval $$\left[\frac{p-3}{2}\sum\limits_{\iota=s-4}^{k-1}p^{\iota}+p^{s-4},\ldots,\frac{p-1}{2}\sum\limits_{\iota=s-3}^{k-1}p^{\iota}-1\right]$$ and its $p^{(k)}$-Fibonacci number is 
				\begin{equation}
				\begin{aligned}\label{ci}
				M\Bigg(\lambda\Big(2(r-1);\Big(x_{(2(s-1),k)},x_{(s-1,k)}+\Big(\alpha+p^{k-1}\Big(\frac{p-1}{2}\Big)\Big)\Big)\Big)\Bigg)\\ & \hspace*{-6cm}= \frac{p-1}{2}\left(2(s-2)p^{s-2}+1-p^{s-3}-1\right).
			\end{aligned}		
				\end{equation}
				Substituting the $p^{(k)}$-Fibonacci numbers from equations \eqref{1st}, \eqref{4thii}, and \eqref{ci} into equation \eqref{less l} of Definition \ref{Fibo}, we obtain $$\mathcal{M}\Big(\lambda\big(2r;(x_{(2s,k)},x_{(s,k)}+l)\big)\Big)=\frac{p-1}{2}\left(2(s-1)\cdot p^{s-1}+1+p^{s-2}-1\right)$$
				\item [(iii)] Let $l\in\left[\frac{p-1}{2}\sum\limits_{\iota=i^{'}}^{k-1}p^{\iota},\ldots,\frac{p-1}{2} \sum\limits_{\iota=i^{'}-1}^{k-1}p^{\iota}-1\right],~2\leq i^{'}\leq s-2$. For each $i^{'}$, the $p^{(k)}$-Fibonacci numbers of the components are the same as those in Case \textbf{(b)}(i), except in the case $t^{'}=\frac{p-1}{2}$. Let $l=\frac{p-1}{2}\sum\limits_{\iota=i^{'}}^{k-1}p^{\iota}$. Then by Definition \ref{comp}, we have
				\begin{align*}
					\alpha+p^{k-1}\left(\frac{p-1}{2}\right) = \frac{p-1}{2}\sum\limits_{\iota=i^{'}-1}^{k-1}p^{\iota} \quad 
				\end{align*}
				where $\alpha = \frac{p-1}{2}\sum\limits_{\iota=i^{'}-1}^{k-2}p^{\iota}$. Hence, the position $\alpha+p^{k-1}\left(\frac{p-1}{2}\right)$ corresponding to the component is in the interval $$\left[\frac{p-1}{2}\sum\limits_{\iota=i^{'}-1}^{k-1}p^{\iota},\ldots,\frac{p-1}{2}\sum\limits_{\iota=i^{'}-2}^{k-1}p^{\iota}-1\right].$$
				The corresponding $p^{(k)}$-Fibonacci number of the component is  
				
				$M\Bigg(\lambda\Big(2(r-1);\Big(x_{(2(s-1),k)},x_{(s-1,k)}+\Big(\alpha+p^{k-1}\Big(\frac{p+1}{2}\Big)\Big)\Big)\Bigg)$
				\begin{align}\label{ciii}
					& = \frac{p-1}{2}\Big(2(s-2)\cdot p^{s-2}-1-p^{s-3}-2\sum\limits_{\theta=0}^{s-4}p^{\theta}+\sum\limits_{\mu=s-i^{'}-1}^{s-3}p^{\mu}-1\Big)
				\end{align}
				Substituting the $p^{(k)}$-Fibonacci numbers from equations \eqref{1st}, \eqref{4thii}, and \eqref{ciii}, into equation \eqref{less l} of Definition \ref{Fibo}, we obtain
				
				$\mathcal{M}\Big(\lambda\big(2r;(x_{(2s,k)},x_{(s,k)}+l)\big)\Big)$
				\begin{align*}
					&=\frac{p-1}{2}\Big(2(s-1)\cdot p^{s-1}-1-p^{s-2}-2\sum\limits_{\theta=0}^{s-3}p^{\theta}+\sum\limits_{\mu=s-i^{'}-1}^{s-2}p^{\mu}-1\Big).
				\end{align*}
				
				Next for $i^{'}=1$, let $l\in\left[\frac{p-1}{2}\sum\limits_{\iota=1}^{k-1}p^{\iota},\ldots,\frac{p-1}{2} \sum\limits_{\iota=0}^{k-1}p^{\iota}-1\right]$. From equation \eqref{less l}, we obtain
				
				$\mathcal{M}\Big(\lambda\big(2r;(x_{(2s,k)},x_{(s,k)}+l)\big)\Big)$ \begin{align*}
					& =\frac{p-1}{2}\Big(2(s-1)\cdot p^{s-1}-1-p^{s-2}-2\sum\limits_{\theta=0}^{s-3}p^{\theta}+\sum\limits_{\mu=s-2}^{s-2}p^{\mu}-1\Big).
				\end{align*}
				Here, $\mathcal{M}\Big(\lambda\big(2(r-1);(x_{(2(s-1),k)},x_{(s-1,k)}+(\alpha+p^{k-1}t^{'}))\big)\Big)$ coincides with Case \textbf{(b)}(i) except for the case $t^{'}=\frac{p-1}{2}$. The corresponding $p^{(k)}$-Fibonacci number of the component is
				
				$\mathcal{M}\Bigg(\lambda\Big(2(r-1);\Big(x_{(2(s-1),k)},x_{(s-1,k)}+\Big(\alpha+p^{k-1}\Big(\frac{p+1}{2}\Big)\Big)\Big)\Bigg)$
				\begin{align}						
					 &=\frac{p-1}{2}\Big( 2(s-2)p^{s-2}-1-p^{s-3}-2\sum\limits_{\theta=0}^{s-4}p^{\theta}-1\Big).\label{cv}
				\end{align} 
				From equations \eqref{c0}, \eqref{ci} and \eqref{ciii} it follows that the interval $$\left[\frac{p-3}{2}\sum\limits_{\iota=s-4}^{k-1}p^{\iota}+p^{s-4},\ldots,\frac{p-1}{2}\sum\limits_{\iota=s-3}^{k-1}p^{\iota}-1\right]$$ in the $2(r-1)$-th floor is divided into three subintervals: $$\left[\frac{p-3}{2}\sum\limits_{\iota=s-4}^{k-1}p^{\iota}+p^{s-4},\ldots,\frac{p-3}{2}\sum\limits_{\iota=s-3}^{k-1}p^{\iota}+p^{s-3}-1\right],$$ $$\left[\frac{p-3}{2}\sum\limits_{\iota=s-3}^{k-1}p^{\iota}+p^{s-3},\ldots,\frac{p-1}{2}\sum\limits_{\iota=s-2}^{k-1}p^{\iota}-1\right], $$ and $$\left[\frac{p-1}{2}\sum\limits_{\iota=s-2}^{k-1}p^{\iota},\ldots,\frac{p-1}{2}\sum\limits_{\iota=s-3}^{k-1}p^{\iota}-1\right]. $$ The subdivision occurs, since the $p^{(k)}$-Fibonacci number of the components varies at the position  $\alpha+p^{k-1}t^{'}$ when $t^{'}=\frac{p-1}{2}$. 
				\item [(iv)] 	For $l=\frac{p-1}{2}\sum\limits_{\iota=0}^{k-1}p^{\iota}$, the components have the same $p^{(k)}$-Fibonacci numbers as in the previous case with $i^{'}=1$, since the respective components are identical.  
Substituting the corresponding $p^{(k)}$-Fibonacci numbers of the components into equation \eqref{greater l} of Definition \ref{Fibo}, we obtain
				\begin{align*}
					\mathcal{M}\Big(\lambda\big(2r;(x_{(2s,k)},x_{(s,k)}+l)\big)\Big)\\&\hspace*{-1cm}=\frac{p-1}{2}\Big(2(s-1)\cdot p^{s-1}-1-p^{s-2}-2\sum\limits_{\theta=0}^{s-3}p^{\theta}-1\Big)
				\end{align*}			
			\end{enumerate}
			\item [(d)] The argument follows as in Case \textbf{(b)}, except that we substitute the $p^{(k)}$-Fibonacci numbers of the components into equation \eqref{greater l}.
			
		\end{description}	
	\end{proof}
	\begin{theorem}\label{greater}
		For $2\leq k\leq r-1$ and $s\geq k+2$ $(\text{where}~r-k=s)$, the $p^{(k)}$-Fibonacci numbers at the vertex $\lambda\big(2r;(x_{(2s,k)},x_{(s,k)}+l)\big)$, where $0\leq l <p^{k}$, is given by 
		\begin{description}
			\item[(a)] If $l=0$, then \begin{align*}
			                            \mathcal{M}\Big(\lambda\big(2r;(x_{(2s,k)},x_{(s,k)}+l)\big)\Big)&=\frac{p^{s-2-k}(p-1)}{2}(2(s-1)p^{k+1}-2(s-k)+3)
			                          \end{align*}
			\item[(b)] For $1\leq t\leq \frac{p-3}{2}$, 
			\begin{enumerate}
				\item [(i)] If $l\in\left[(t-1)\sum\limits_{\iota=i}^{k-1}p^{\iota}+p^{i},\ldots,(t-1)\sum\limits_{\iota=i+1}^{k-1}p^{\iota}+p^{i+1}-1\right],~0\leq i \leq k-2$, then 
				\begin{align*} \allowdisplaybreaks
					\\ \\ \mathcal{M}\Big(\lambda\big(2r;(x_{(2s,k)},x_{(s,k)}+l)\big)\Big) \\ &\hspace*{-4.5cm}=	\frac{p^{s-2-k}(p-1)}{2}\Big(2(s-1)p^{k+1}-2(s-k)+3+2t\sum_{j=0}^{k-1}p^{j}-2\sum_{\mu=0}^{k-i-2}p^{\mu}\Big)
				\end{align*}
				\item [(ii)]If $l\in\left[tp^{k-1},\ldots,t\sum\limits_{\iota=0}^{k-1}p^{\iota}\right]$, then
				\begin{align*}
				 \mathcal{M}\Big(\lambda\big(2r;(x_{(2s,k)},x_{(s,k)}+l)\big)\Big) \\	&\hspace*{-3cm}=\frac{p^{s-2-k}(p-1)}{2}\Big(2(s-1)p^{k+1}-2(s-k)+3+2t\sum_{j=0}^{k-1}p^{j}\Big)
				\end{align*}				
			\end{enumerate}
			\item[(c)] For $t=\frac{p-1}{2},$
			\begin{enumerate}
				\item [(i)] If $l\in\left[\frac{p-3}{2}\sum\limits_{\iota=i}^{k-1}p^{\iota}+p^{i},\ldots,\frac{p-3}{2}\sum\limits_{\iota=i+1}^{k-1}p^{\iota}+p^{i+1}-1\right],~0\leq i \leq k-2$ then 
				\begin{align*}
				\mathcal{M}\Big(\lambda\big(2r;(x_{(2s,k)},x_{(s,k)}+l)\big)\Big) \\	&\hspace*{-5cm}=	\frac{p^{s-2-k}(p-1)}{2}\Big(2(s-1)p^{k+1}-2(s-k)+3+(p-1)\sum_{j=0}^{k-1}p^{j}-2\sum_{\mu=0}^{k-i-2}p^{\mu}\Big)
				\end{align*}
				\item [(ii)] If $l\in\left[\frac{p-1}{2}\sum\limits_{\iota=i^{'}}^{k-1}p^{\iota},\ldots,\frac{p-1}{2} \sum\limits_{\iota=i^{'}-1}^{k-1}p^{\iota}-1\right],~1\leq i^{'}\leq k-1$ then 
				\begin{align*}
		\mathcal{M}\Big(\lambda\big(2r;(x_{(2s,k)},x_{(s,k)}+l)\big)\Big)&= \frac{p^{s-2-k}(p-1)}{2}\Big(2(s-1)p^{k+1}-2(s-k)+3\\ &  \quad-p^{k}-2\sum_{\theta = 1}^{k-1}p^{\theta}+2\sum_{\mu =k-i^{'}+1}^{k}p^{\mu}-1\Big)
				\end{align*}
				\item [(iii)] If $l=\frac{p-1}{2} \sum\limits_{\iota=0}^{k-1}p^{\iota}$, then 
				\begin{align*}
					\mathcal{M}\Big(\lambda\big(2r;(x_{(2s,k)},x_{(s,k)}+l)\big)\Big) \\ &\hspace*{-4cm}=\frac{p^{s-2-k}(p-1)}{2}\Big(2(s-1)p^{k+1}-2(s-k)+3-p^{k}-2\sum_{\theta = 1}^{k-1}p^{\theta}-1\Big)
				\end{align*} 
			\end{enumerate}
			\item[(d)] For $\frac{p+1}{2}\leq t\leq p-1,$ \allowdisplaybreaks
			\begin{enumerate}
				\item [(i)] 	If $l\in\left[(t-1)\sum\limits_{\iota=i}^{k-1}p^{\iota}+p^{i},\ldots,(t-1)\sum\limits_{\iota=i+1}^{k-1}p^{\iota}+p^{i+1}-1\right],~0\leq i \leq k-2$ then
				\begin{align*}					 	 \mathcal{M}\Big(\lambda\big(2r;(x_{(2s,k)},x_{(s,k)}+l)\big)\Big)\\&\hspace*{-5.2cm}=\frac{p^{s-2-k}(p-1)}{2}\Big(2(s-1)p^{k+1}-2(s-k)+3+2(t-p)\sum_{j=0}^{k-1}p^{j}-2\sum_{\mu=0}^{k-i-2}p^{\mu}\Big)
				\end{align*}
				\item [(ii)]If $l\in\left[tp^{k-1},\ldots,t\sum\limits_{\iota=0}^{k-1}p^{\iota}\right]$, then
			\begin{align*}
					  \mathcal{M}\Big(\lambda\big(2r;(x_{(2s,k)},x_{(s,k)}+l)\big)\Big)\\&\hspace*{-4cm}=\frac{p^{s-2-k}(p-1)}{2}\Big(2(s-1)p^{k+1}-2(s-k)+3+(2t-2p)\sum_{j=0}^{k-1}p^{j}
					\Big)
				\end{align*} 
			\end{enumerate}
		\end{description}
	\end{theorem}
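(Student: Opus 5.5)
We prove Theorem \ref{greater} by induction on $s\geq k+2$. The base case $s=k+2$ comes from Theorem \ref{less} at its top value $s=k+1$, pushed through one step of the recurrences of Definition \ref{Fibo}: equations \eqref{l0}, \eqref{less l} and \eqref{greater l}.

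\textbf{Base case.} When $s=k+2$, the ranges of the index $i$ in Theorem \ref{less} (namely $0\leq i\leq s-4$ and $1\leq i^{'}\leq s-2$) become exactly $0\leq i\leq k-2$ and $1\leq i^{'}\leq k-1$ in Theorem \ref{greater}. So the interval partition of $[0,p^{k})$ stabilizes at $s=k+2$. The mechanism is this: in Theorem \ref{less} the subdivision deepens by one level of $p$-adic digits at each step, via the projection $l\mapsto\alpha$ of Remark \ref{Proj}, and $l<p^{k}$ has only $k$ digits. We therefore first compute the value at $s=k+2$ by the same component bookkeeping as in the proof of Theorem \ref{less}. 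For each case (a)--(d) and each $t^{'}\in[0,p-1]$, we locate the component position $\alpha+p^{k-1}t^{'}$ (Definition \ref{comp}) in the level-$(s-1)$ partition. We then sum the $p$ values from Theorem \ref{less}, and add the boundary terms $p^{s-2}\frac{p^{2}-1}{2}$ or $p^{s-2}\frac{(p-1)^{2}}{2}$, together with $p^{s-3}(p-1)\big(\frac{p(p-1)}{2}+t\big)$. Finally we check agreement with the formula of Theorem \ref{greater} at $s=k+2$, where $p^{s-2-k}=1$ and $-2(s-k)+3=-1$. This matches the constants $\pm1$ in Theorem \ref{less}.

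\textbf{Inductive step.} Assume the formulas of Theorem \ref{greater} hold at $s-1\geq k+2$. Since the partition no longer changes, the multiset of intervals containing the $p$ components of a vertex $l$ depends only on which interval of the same partition $l$ lies in. We tabulate the components exactly as in equations \eqref{1st}, \eqref{2nd}, \eqref{4thi}, \eqref{4thii}, \eqref{3rd} and \eqref{ci}--\eqref{cv}, with the indices adapted. Every component value at level $s-1$ has the form
\[
\frac{p^{s-3-k}(p-1)}{2}\big(C_{s-1}+\delta\big),\qquad C_{s-1}=2(s-2)p^{k+1}-2(s-1-k)+3,
\]
where $\delta$ is one of the correction terms ($2t^{'}\sum_j p^{j}$, $-2\sum_\mu p^{\mu}$, etc.). Summing over $t^{'}$ gives $p\cdot C_{s-1}$ plus a total correction. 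Adding the two boundary terms from Definition \ref{Fibo}, which are $p^{s-2}(\cdot)$ and $p^{s-3}(p-1)(\cdot)$, we need
\[
pC_{s-1}+\frac{2}{p-1}\Big(p^{k+1}\cdot\text{(block $2r-2$ term)}+p^{k}(p-1)\cdot\text{(block $2r-3$ term)}\Big)
\]
to equal $C_s$ plus the same $l$-dependent correction, after factoring out $\frac{p^{s-2-k}(p-1)}{2}$. The key identity to verify is that the $t^{'}$-dependent corrections telescope: each correction level is reproduced in the summed correction with coefficient $p$. We expect this because the corrections are $p$-adic digit sums, so multiplying by $p$ and adding the new digit $t$ via $2t\sum_{j}p^{j}$ is consistent.

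\textbf{Main obstacle.} The hard part is the middle case (c), $t=\frac{p-1}{2}$. There the component with $t^{'}=\frac{p-1}{2}$ lands in a sub-interval whose index $i^{'}$ shifts by one, and the boundary term switches from \eqref{less l} to \eqref{greater l} at $l=\frac{p-1}{2}\sum_{\iota=0}^{k-1}p^{\iota}$. We will first verify the saturated case $i^{'}=k-1$ and the endpoint (c)(iii) directly. After that we will handle general $i^{'}$ by showing that the extra sum $2\sum_{\mu=k-i^{'}+1}^{k}p^{\mu}$ scales correctly. This relies on the fact that, once $s\geq k+2$, the position of the $t^{'}=\frac{p-1}{2}$ component is independent of $s$.
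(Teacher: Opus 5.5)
Your proposal follows the same route as the paper: induction on $s$ through the recurrences \eqref{l0}, \eqref{less l} and \eqref{greater l}, seeded by the $s=k+1$ values of Theorem \ref{less} and using the component-location bookkeeping from that theorem's proof, and it is more explicit than the paper's one-line proof. Two bookkeeping slips to correct when you write it out: at $s=k+2$ the range $1\leq i^{'}\leq s-2$ of Theorem \ref{less} becomes $1\leq i^{'}\leq k$, not $k-1$, and its top (c) pieces degenerate, so read the stabilized partition directly off Theorem \ref{greater}; and since your components contribute $pC_{s-1}$ you have factored out $\frac{p^{s-3-k}(p-1)}{2}$, so the target is $p\,(C_s+\delta(l))$ rather than $C_s+\delta(l)$.
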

	\begin{proof}
		We prove the result by mathematical induction, by repeatedly applying equations (\ref{l0}), (\ref{less l}) and (\ref{greater l}).
		
	\end{proof}
	
	\begin{example}
		Let us discuss the paths starting at the vertex $\lambda\big(10;(x_{(6,2)},x_{(3,2)}+9)\big)$ and ending at the vertices $\lambda\big(1;(p-1,i)\big)$, where $0\leq i \leq p-2$,  using the $p$-Bratteli diagram with $p=5$. Here $r=5$, $s=3$, $x_{(6,2)} = 575$, $x_{(3,2)}=275$ and $l=9$. Furthermore, we compute the $p^{(k)}$-Fibonacci numbers at the vertices along these paths.

		The paths from the $10^{th}$ floor to the $6^{th}$ floor are illustrated in Figure \ref{ex}.
		
		\begin{figure}[!ht]
			\centering
			\includegraphics[width=12cm,height=8.8cm]{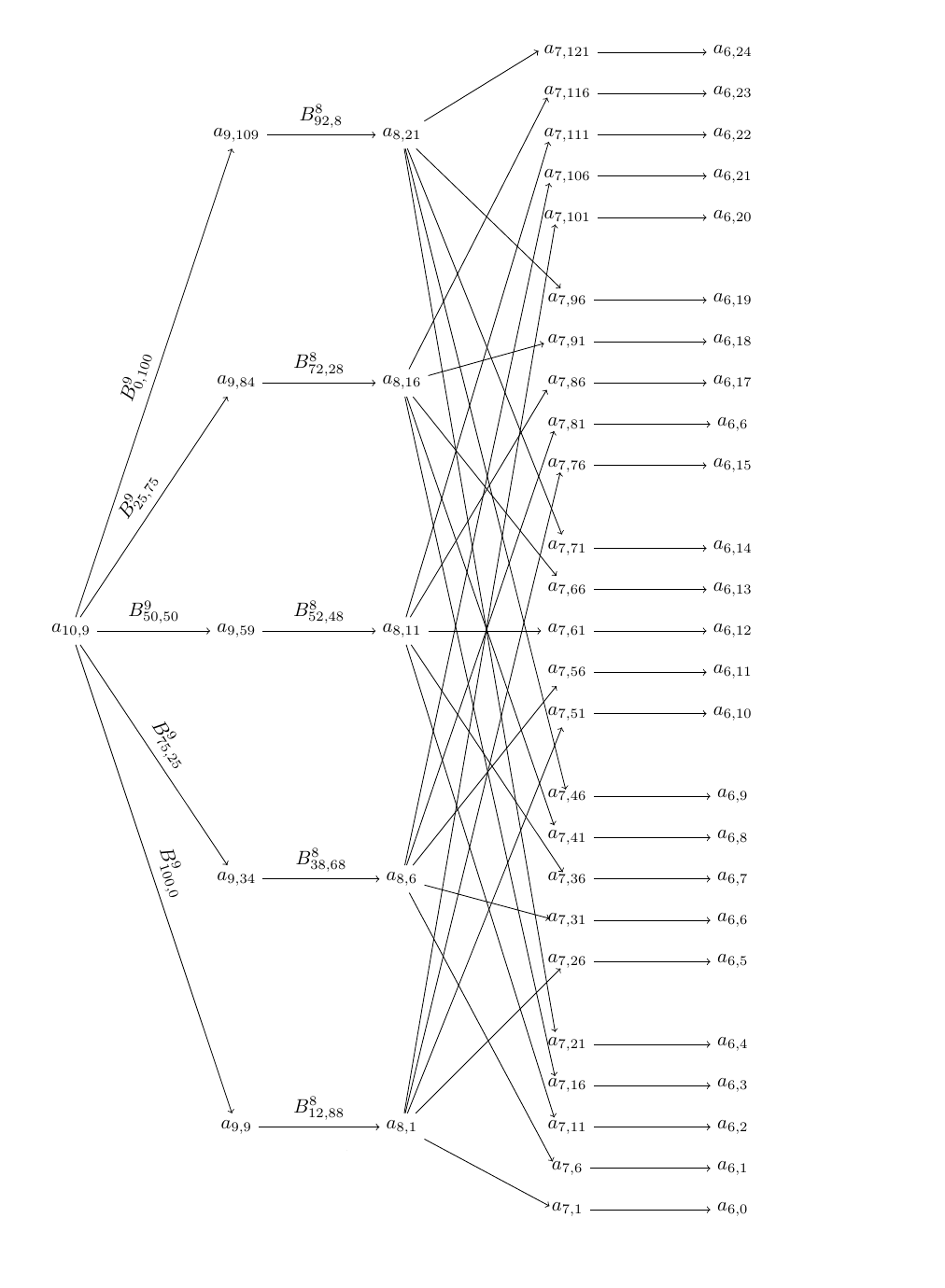}
			\caption{}
			\label{ex}
		\end{figure}  
		
		Here $a_{r,j}$ denotes the vertex in the $j^{th}$ position at the $r^{th}$ floor and 
		\begin{itemize}
			\item $a_{10,9}=\lambda\big(10;(575,275+9)\big)$. \item $a_{9,9}=\lambda\big(9;(475,175+9)\big)$, $a_{9,34}=\lambda\big(9;(475,175+34)\big)$, $a_{9,59}=\lambda\big(9;(475,175+59)\big)$, $a_{9,84}=\lambda\big(9;(475,175+84)\big)$, $a_{9,109}=\lambda\big(9;(475,175+109)\big)$.
			\item $a_{8,1}=\lambda\big(8;(375,175+1)\big)$, $a_{8,1}=\lambda\big(8;(375,175+6)\big)$, $a_{8,1}=\lambda\big(8;(375,175+11)\big)$, $a_{8,1}=\lambda\big(8;(375,175+16)\big)$, $a_{8,1}=\lambda\big(8;(375,175+21)\big)$. 
			\item $a_{7,l^{'}}=\lambda\big(7;(275,75+l^{'})\big)$, where $l^{'} = 1,~6,~11,~16,~21,~26,~31,~36,~41,~46,~51,~56,~61,$ $66,~71,~76$, $~81,~86,~91,~96,~101,~106,~111,~116,~121$. 
			\item $a_{6,l^{''}}=\lambda\big(6;(175,75+l^{''})\big)$, where $0\leq l^{''}\leq 24$.
			
		\end{itemize}
		
		The paths from a particular vertex $\lambda\big(6;(175,75+5)\big)$,  at the $6^{th}$ floor to the vertices at the $1^{st}$ floor are shown in Figure \ref{exm}:
		\begin{figure}[!ht]
			\centering
			\includegraphics[width=8cm,height=10cm]{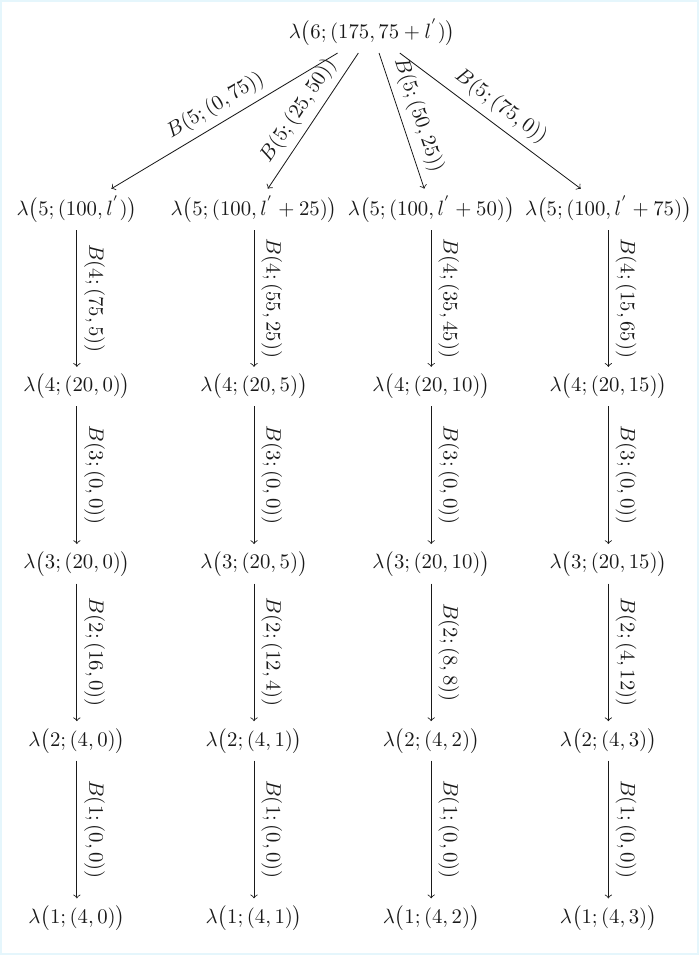}
			\caption{}
			\label{exm}
		\end{figure}  
		
		Similarly, we construct the paths from $6^{th}$ floor to $1^{st}$ floor
		for other vertices as well.
		We define the vertices are defined as in Section \ref{vertex}, and we obtain the edges by removing the blocks as described in Section \ref{edge}.
		
		The $p^{(k)}$-Fibonacci numbers at these vertices are as follows:
		\begin{itemize}
			\item $\mathcal{M}\big(\lambda\big(6;(175,75+l^{''})\big)\big) = 2$ for all $0\leq l^{''}\leq 24$ (By Theorem \ref{base}).
			\item $\mathcal{M}\big(\lambda\big(8;(375,175+1)\big)\big) = 22$, $\mathcal{M}\big(\lambda\big(8;(375,175+6)\big)\big) = 22$, $\mathcal{M}\big(\lambda\big(8;(375,175+11)\big)\big) = 22$, $\mathcal{M}\big(\lambda\big(8;(375,175+16)\big)\big) = 18$, $\mathcal{M}\big(\lambda\big(8;(375,175+21)\big)\big) = 18$ (By Theorem \ref{base}).
			\item $\mathcal{M}\big(\lambda\big(10;(575,275+9)\big)\big) = 210$ (By Theorem \ref{less}).
		\end{itemize}
	\end{example}
	
	The following theorems justify this name \textbf{($p^{(k)}$-Fibonacci numbers)}.
	\begin{theorem}\label{rr}
		For each $k\geq 0$ and $s\geq k+2$ $(\text{where}~r-k=s)$, the  $p^{(k)}$-Fibonacci numbers satisfy the following recurrence relation: 
		\begin{enumerate}
			\item When $k=0$,
			
			$\mathcal{M}\Big(\lambda\big(2(r+2);(x_{(2(s+2),0)},x_{(s+2,0)})\big)\Big)$
			\begin{align*}
				&=b_{s}+p\mathcal{M}\Big(\lambda\big(2r;(x_{(2s,0)},x_{(s,0)})\big)\Big)+(p-1)\mathcal{M}\Big(\lambda\big(2(r+1);(x_{(2(s+1),0)},x_{(s+1,0)})\big)\Big)
			\end{align*}
 where $b_{s}=2p^{s-1}(p-1)\left(\frac{p^{2}-1}{2}\right)$.\\ \\
			\item When $k=1,$
			\begin{enumerate}
				\item For $0\leq t <\frac{p-1}{2}$
				
				$\mathcal{M}\Big(\lambda\big(2(r+2);(x_{(2(s+2),1)},x_{(s+2,1)}+j^{1}_{t})\big)\Big)$
				\begin{align*}
					&=b_{s}+\sum\limits_{t_{2}=0}^{p-1}\mathcal{M}\Big(\lambda\big(2r;(x_{(2s,1)},x_{(s,1)}+j^{1}_{t_{2}})\big)\Big)+ \\ & \quad \sum\limits_{t_{1}=1}^{p-1}\mathcal{M}\Big(\lambda\big(2(r+1);(x_{(2(s+1),1)},x_{(s+1,1)}+j^{1}_{t_{1}})\big)\Big),
				\end{align*}
				where $b_{s}=p^{s-1}(p-1)\left(p^{2}+p+t\right)$. \label{k1just}
				\item For $\frac{p-1}{2}\leq t\leq p-1$
				
				$\mathcal{M}\Big(\lambda\big(2(r+2);(x_{(2(s+2),1)},x_{(s+2,1)}+j^{1}_{t})\big)\Big)$
				\begin{align*}
				&=b_{s}+\sum_{t_{2}=0}^{p-1}\mathcal{M}\Big(\lambda\big(2r;(x_{(2s,1)},x_{(s,1)}+j^{1}_{t_{2}})\big)\Big)+ \\ & \quad\sum_{t_{1}=1}^{p-1}\mathcal{M}\Big(\lambda\big(2(r+1);(x_{(2(s+1),1)},x_{(s+1,1)}+j^{1}_{t_{1}})\big)\Big),
				\end{align*}
			 where $b_{s}=p^{s-1}(p-1)\left(p^{2}+t\right)$. \label{k2just}\\ \\
			\end{enumerate}
			\item When $k\geq 2,$
			\begin{enumerate}
				\item For $j^{k}_{t-1}<l <j^{k}_{t},~1\leq t\leq \frac{p-1}{2}$ and $l =j^{k}_{t},~0\leq t< \frac{p-1}{2}$
				
				$\mathcal{M}\Big(\lambda\big(2(r+2);(x_{(2(s+2),k)},x_{(s+2,k)}+l)\big)\Big)$
				\begin{align*}
					&=b_{s}+\sum\limits_{t_{2}=0}^{p-1}\mathcal{M}\Big(\lambda\big(2r;(x_{(2s,k)},x_{(s,k)}+(\alpha^{'}+p^{k-1}t_{2}))\big)\Big)+ \\ &  \quad\sum\limits_{t_{1}=1}^{p-1}\mathcal{M}\Big(\lambda\big(2(r+1);(x_{(2(s+1),k)},x_{(s+1,k)}+(\alpha+p^{k-1}t_{1}))\big)\Big),
				\end{align*}
				where $b_{s}=p^{s-1}(p-1)\left(p^{2}+p+t\right)$. 
				
				The vertices $\lambda\big(2(r+1);(x_{(2(s+1),k)},x_{(s+1,k)}+(\alpha+p^{k-1}t_{1}))\big)$, where $0\leq t_{1}\leq p-1$ are the components of $\lambda\big(2(r+2);(x_{(2(s+2),k)},x_{(s+2,k)}+l)\big)$ and the vertices $\lambda\big(2r;(x_{(2s,k)},x_{(s,k)}+(\alpha^{'}+p^{k-1}t_{2}))\big)$, where $0\leq t_{2}\leq p-1$ are the components of  $\lambda\big(2(r+1);(x_{(2(s+1),k)},x_{(s+1,k)}+\alpha)\big)$.\label{k11just}
				\item For $j^{k}_{t-1}\leq  l \leq j^{k}_{t}$, $\frac{p+1}{2}\leq t \leq p-1$ 
				\begin{align*}
					\mathcal{M}\Big(\lambda\big(2(r+2);(x_{(2(s+2),k)},x_{(s+2,k)}+l)\big)\Big)
					\\&\hspace*{-4cm}=b_{s}+\sum\limits_{t_{2}=0}^{p-1}\mathcal{M}\Big(\lambda\big(2r;(x_{(2s,k)},x_{(s,k)}+(\alpha^{'}+p^{k-1}t_{2}))\big)\Big)+ \\ & \hspace*{-4cm}\quad \sum\limits_{t_{1}=1}^{p-1}\mathcal{M}\Big(\lambda\big(2(r+1);(x_{(2(s+1),k)},x_{(s+1,k)}+(\alpha+p^{k-1}t_{1}))\big)\Big),
				\end{align*}
					where $b_{s}=p^{s-1}(p-1)\left(p^{2}+t\right)$. 
					
					The vertices $\lambda\big(2(r+1);(x_{(2(s+1),k)},x_{(s+1,k)}+(\alpha+p^{k-1}t_{1}))\big)$, where $0\leq t_{1}\leq p-1$ are the components of $\lambda\big(2(r+2);(x_{(2(s+2),k)},x_{(s+2,k)}+l)\big)$ and the vertices $	\lambda\big(2r;(x_{(2s,k)},x_{(s,k)}+(\alpha^{'}+p^{k-1}t_{2}))\big)$, where $0\leq t_{2}\leq p-1$ are the components of  $\lambda\big(2(r+1);(x_{(2(s+1),k)},x_{(s+1,k)}+\alpha)\big)$.		\label{k22just}	
			\end{enumerate}
		\end{enumerate}
	\end{theorem}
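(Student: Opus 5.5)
The plan is to obtain each recurrence by applying the one-step decomposition of Definition~\ref{Fibo} twice. The first application is at the vertex on floor $2(r+2)$, which links it to its $p$ components on floor $2(r+1)$. The second application is at one distinguished component, which links it to its own components on floor $2r$.

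Write $N=\lambda\big(2(r+2);(x_{(2(s+2),k)},x_{(s+2,k)}+l)\big)$. By Definition~\ref{Fibo} (equations \eqref{k0}, \eqref{l0}, \eqref{less l}, \eqref{greater l}), $\mathcal{M}(N)$ is the sum of $\mathcal{M}$ over the components $\lambda\big(2(r+1);(x_{(2(s+1),k)},x_{(s+1,k)}+(\alpha+p^{k-1}t_1))\big)$, $0\le t_1\le p-1$. To this we add the descent counts at blocks $2r+2$ and $2r+1$, which are given by Corollary~\ref{cor rules l} with $s$ replaced by $s+2$.

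We separate the term $t_1=0$, namely the component at position $\alpha$. Applying Definition~\ref{Fibo} again to this vertex writes its $\mathcal{M}$ as the sum of $\mathcal{M}$ over its components $\lambda\big(2r;(x_{(2s,k)},x_{(s,k)}+(\alpha'+p^{k-1}t_2))\big)$, plus that vertex's own block-$2r$ and block-$(2r-1)$ contributions from Corollary~\ref{cor rules l} with $s+1$. This produces the stated form of the recurrence. The constant $b_s$ is then the sum of the four boundary contributions: two at level $s+2$ and two at level $s+1$.

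For $k=0$ the components are all the single vertex $V^{2r}_0$. The sum over $t_1\ge1$ is therefore $(p-1)\mathcal{M}(\cdot)$ at level $s+1$, and the inner sum is $p\,\mathcal{M}(\cdot)$ at level $s$. The constant from \eqref{k0} at $s+2$ and at $s+1$ is
\[
p^{s}\tfrac{(p-1)^2}{2}+p^{s-1}(p-1)\tfrac{p(p-1)}{2}+p^{s-1}\tfrac{(p-1)^2}{2}+p^{s-2}(p-1)\tfrac{p(p-1)}{2}.
\]
I would simplify this and compare it with $2p^{s-1}(p-1)\frac{p^2-1}{2}$. As a sanity check I would also verify the identity directly from the closed form in Theorem~\ref{p0}.

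For $k=1$ and $k\ge2$, the work is bookkeeping on which case of Definition~\ref{Fibo} applies at each level. At level $s+2$ the case is determined by $l$: whether $l<\frac{p^k-1}{2}$, and the index $t$ with $j^k_{t-1}<l\le j^k_t$. At level $s+1$ it is determined by the position $\alpha$, the projection of $l$.

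The main obstacle is this second point: showing that the case of $\alpha$ is compatible with that of $l$, so that the summed constants collapse to $p^{s-1}(p-1)(p^2+p+t)$ for $t<\frac{p-1}{2}$ and to $p^{s-1}(p-1)(p^2+t)$ for $t\ge\frac{p-1}{2}$. The plan is as follows.
\begin{itemize}
\item Use the base-$p$ digit description $l=\sum_i p^i t_i$ from Remark~\ref{syt def}, together with $j^k_t=t\sum_{i<k}p^i$. Since the projection $\alpha$ drops the lowest digit, $l\in(j^k_{t-1},j^k_t]$ places $\alpha$ in a range on the same side of $\frac{p^k-1}{2}$.
\item With that in hand, add the block-$(2r+2)$ term $p^{s+1}(p-1)\frac{p\pm1}{2}$, the block-$(2r+1)$ term $p^{s}(p-1)(\frac{p(p-1)}{2}+t)$, and the analogous level-$(s+1)$ terms, and factor out $p^{s-1}(p-1)$.
\end{itemize}

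I expect the $t$-dependence at the two levels to combine only after some terms of the sums over $t_1$ are regrouped into $b_s$. This adjustment, and the edge cases $l=j^k_t$, are where I would check small examples first: the case $p=5$, $k=1$, compared against Theorem~\ref{p1} and the listed numerical values.
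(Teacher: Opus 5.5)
Your $k=0$ argument is correct and is the same as the paper's. You apply \eqref{k0} at levels $s+2$ and $s+1$, split $p\,\mathcal{M}$ at level $s+1$ as $\mathcal{M}+(p-1)\mathcal{M}$, and the four constants sum to $p^{s-1}(p-1)^2(p+1)=b_s$.

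For $k\ge1$ the paper only says the claims follow from \eqref{l0}, \eqref{less l}, \eqref{greater l}, which is your two-step unfolding. However, your resolution of what you call the main obstacle is wrong. Write $C_\sigma(m)$ for the additive constant that Definition~\ref{Fibo} attaches to position $m$ at level $\sigma$. The unfolding gives exactly $b_s=C_{s+2}(l)+C_{s+1}(\alpha)$, where the second term comes from the separated $t_1=0$ component at position $\alpha=\lfloor l/p\rfloor$.

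Your claim that $\alpha$ lies on the same side of $\frac{p^k-1}{2}$ as $l$ is false. Since $\alpha\le p^{k-1}-1<\frac{p^k-1}{2}$ always, that component is never governed by \eqref{greater l}. Even if it were, the total would be $p^{s-2}(p-1)(p^3+pt-p+t_\alpha)$, which is never $p^{s-1}(p-1)(p^2+t)$.

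For $k=1$ the right observation is simply that $\alpha=0$. Then $C_{s+1}(0)=p^{s}(p-1)$ by \eqref{l0}. Meanwhile $C_{s+2}(l)=p^{s-1}(p-1)(p^2+t)$ for $t<\frac{p-1}{2}$ and $p^{s-1}(p-1)(p^2-p+t)$ otherwise. Adding gives $p^2+p+t$ and $p^2+t$ respectively, so the $t$-dependence sits entirely in $C_{s+2}$ and no regrouping of the $t_1$-sums is needed.

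The block terms you planned to add, $p^{s+1}(p-1)\frac{p\pm1}{2}$ and $p^{s}(p-1)\bigl(\frac{p(p-1)}{2}+t\bigr)$, are the normalizations of Corollary~\ref{cor rules l}. These are $p$ times the constants actually written in \eqref{l0}--\eqref{greater l}, so with them you would get $p\,b_s$. Use the Definition's constants, as you already did for $k=0$.

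For $k\ge2$ no compatibility argument can close this step, because the Definition itself does not produce the stated $b_s$ once $\alpha\neq0$. If $l\ge p$, then $1\le\alpha\le p^{k-1}-1<j^k_1$. The position-$\alpha$ component therefore falls under \eqref{less l} with index $1$, giving $C_{s+1}(\alpha)=p^{s-2}(p-1)(p^2+1)$. This exceeds $C_{s+1}(0)=p^{s-2}(p-1)p^2$ by $p^{s-2}(p-1)$.

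This happens for every $l$ in part~3(b), where $l\ge\frac{p^k-1}{2}\ge p$, and for every $l\ge p$ in part~3(a). For example, take $p=3$, $k=2$, $l=3$, so $t=1$ and $\alpha=1$. The Definition gives $b_s=80\cdot3^{s-2}$, while the statement claims $78\cdot3^{s-2}$.

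So part~3 is what the unfolding yields only when $l<p$. Otherwise $b_s$ must be increased by $p^{s-2}(p-1)$, and the paper's one-line proof does not address this either.
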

	\begin{proof}
		\begin{enumerate}
			\item From equation \eqref{k0}), we have 
			
			$\mathcal{M}\big(\lambda\big(2(r+2);(x_{(2(s+2),0)},x_{(s+2,0)})\big)\big)$
			\begin{align*}
				&= 	p\mathcal{M}\Big(\lambda\big(2(r+1);(x_{(2(s+1),0)},x_{(s+1,0)})\big)\Big)+2p^{s}(p-1)\left(\frac{p-1}{2}\right) \\ &=\mathcal{M}\Big(\lambda\big(2(r+1);(x_{(2(s+1),0)},x_{(s+1,0)})\big)\Big)\\ & \quad+(p-1)\mathcal{M}\Big(\lambda\big(2(r+1);(x_{(2(s+1),0)},x_{(s+1,0)})\big)\Big)+2p^{s}(p-1)\left(\frac{p-1}{2}\right)\\ &=b_{s}+ p\mathcal{M}\Big(\lambda\big(2r;(x_{(2s,0)},x_{(s,0)})\big)\Big)+ (p-1)\mathcal{M}\Big(\lambda\big(2(r+1);(x_{(2(s+1),0)},x_{(s+1,0)})\big)\Big)
			\end{align*}
			where $b_{s}=2p^{s-1}(p-1)\left(\frac{p^{2}-1}{2}\right)$.
			
			Proof of (\ref{k1just}), (\ref{k2just}), (\ref{k11just}) and (\ref{k22just}) follow from equations \eqref{l0}, \eqref{less l} and \eqref{greater l} of Definition \ref{Fibo}. 
		\end{enumerate}
	\end{proof}
	
	\section{The generating functions}
	
	Finally, we conclude the study by giving the generating function for the sequence of $p^{(k)}$-Fibonacci numbers $\big(\mathcal{M}\Big(\lambda\big(2r;(x_{(2s,k)},x_{(s,k)}+l)\big)\Big)\big)_{s=k+2}^{\infty}$
	for each $k\geq 0$ and $0\leq l<p^{k}$, and we explain why these numbers are called $p^{(k)}$-Fibonacci numbers.
	\begin{definition}
		For $k\geq 0$, we define the generating function for the sequence of $p^{(k)}$-Fibonacci numbers $(a_{s})_{s=k+2}^{\infty}$ by $$F(x) = \sum_{s=k+2}^{\infty} a_{s}x^{s-(k+2)}.$$
	\end{definition}
	\begin{theorem}\label{gf k0}
		The generating function for the sequence of $p^{(0)}$-Fibonacci numbers $a_{s}=\mathcal{M}\Big(\lambda\big(2r;(x_{(2s,0)},x_{(s,0)})\big)\Big)$, $s\geq 2$ is $$F(x)=\frac{p-1}{2}\left(\frac{2p-2px}{(1-px)^{2}}-\frac{1}{1-px}\right),~|x|<\frac{1}{p}.$$
	\end{theorem}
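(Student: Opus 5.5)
The plan is to substitute the closed form of Theorem~\ref{p0} directly into the defining series and sum two standard power series; no use of the recurrence in Theorem~\ref{rr} should be needed. By Theorem~\ref{p0}, for $s\geq 2$ we have $a_{s}=\frac{p-1}{2}\big(2(s-1)p^{s-1}-(2s-3)p^{s-2}\big)$. By the definition of the generating function with $k=0$, $F(x)=\sum_{s\geq 2}a_{s}x^{s-2}$. I will reindex with $n=s-2\geq 0$, so that
\begin{align*}
a_{n+2}=\frac{p-1}{2}\,p^{n}\big(2p(n+1)-(2n+1)\big).
\end{align*}
It is convenient to split the bracket as $2p(n+1)-2n-1$, which gives three pieces.

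Next I use the two elementary series, valid for $|px|<1$ (that is, $|x|<\frac{1}{p}$):
\begin{align*}
\sum_{n\geq 0}(n+1)(px)^{n}=\frac{1}{(1-px)^{2}},\qquad \sum_{n\geq 0}n(px)^{n}=\frac{px}{(1-px)^{2}}.
\end{align*}
Both follow by differentiating the geometric series $\sum_{n\geq0}(px)^{n}=\frac{1}{1-px}$.

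Then I assemble the three pieces:
\begin{align*}
\sum_{n\geq 0}\big(2p(n+1)-2n-1\big)(px)^{n}=\frac{2p}{(1-px)^{2}}-\frac{2px}{(1-px)^{2}}-\frac{1}{1-px}.
\end{align*}
Multiplying by $\frac{p-1}{2}$ gives exactly $\frac{p-1}{2}\left(\frac{2p-2px}{(1-px)^{2}}-\frac{1}{1-px}\right)$, as claimed. Absolute convergence on $|x|<\frac{1}{p}$ justifies the termwise manipulation.

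There is no real obstacle here. The only points needing care are the index shift $s\mapsto n=s-2$, so that the exponents of $p$ line up with the powers of $x$, and the fact that the formula of Theorem~\ref{p0} is only used for $s\geq 2$, where it is valid. As a sanity check, the constant term is $F(0)=\frac{p-1}{2}(2p-1)$, which matches $a_{2}$ from the proof of Theorem~\ref{p0}.
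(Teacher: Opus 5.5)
Your proposal is correct and is essentially the paper's argument. The paper omits the proof as a ``straightforward calculation'', which amounts to substituting the closed form of Theorem~\ref{p0} for $s\geq 2$ into $\sum_{s\geq 2}a_{s}x^{s-2}$ and summing with the differentiated geometric series, exactly as you do; your index shift $n=s-2$ and the check $F(0)=a_{2}=\frac{p-1}{2}(2p-1)$ are both right.
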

	\begin{theorem}\label{gf k1}
		The generating function for the sequence of $p^{(1)}$-Fibonacci numbers $a_{s,t}=\mathcal{M}\Big(\lambda\big(2r;(x_{(2s,1)},x_{(s,1)}+j^{1}_{t})\big)\Big)$
		is
		\begin{align*}
			F_{t}(x)&=\frac{p-1}{2}\left(\frac{2p^{2}+2t-1}{1-px}+\frac{2p^{2}-2px}{(1-px)^{2}}\right),~|x|<\frac{1}{p},~0\leq t<\frac{p-1}{2}, \\F_{t}(x)&=\frac{p-1}{2}\left(\frac{2p^{2}+2t-2p-1}{1-px}+\frac{2p^{2}-2px}{(1-px)^{2}}\right),~|x|<\frac{1}{p},~\frac{p-1}{2}\leq t\leq p-1.
		\end{align*}
	\end{theorem}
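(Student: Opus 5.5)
The plan is to derive $F_t(x)$ directly from the closed form of Theorem~\ref{p1}, rather than from the recurrence in Theorem~\ref{rr}, since the closed form is already available. Here $k=1$, so the sequence starts at $s=k+2=3$. The generating function is $F_t(x)=\sum_{s\ge 3}a_{s,t}x^{s-3}$, where by Theorem~\ref{p1}
\begin{displaymath}
a_{s,t}=\frac{p^{s-3}(p-1)}{2}\bigl(2(s-1)p^{2}+2t-(2s-5)-2p\,\epsilon_t\bigr),
\end{displaymath}
with $\epsilon_t=0$ for $0\le t<\frac{p-1}{2}$ and $\epsilon_t=1$ for $\frac{p-1}{2}\le t\le p-1$. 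Treating both cases at once through $\epsilon_t$ reduces the proof to a single computation.

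First I reindex by $n=s-3\ge 0$. This gives $2(s-1)p^2-(2s-5)=2(n+2)p^2-(2n+1)$, so
\begin{displaymath}
a_{n+3,t}=\frac{p-1}{2}\,p^{n}\bigl((4p^{2}+2t-1-2p\epsilon_t)+(2p^{2}-2)n\bigr).
\end{displaymath}
Each coefficient is therefore a constant times $p^n$ plus a constant times $np^n$. I will then use the two standard series, valid for $|x|<1/p$:
\begin{displaymath}
\sum_{n\ge0}p^nx^n=\frac{1}{1-px},\qquad \sum_{n\ge0}(n+1)p^nx^n=\frac{1}{(1-px)^2}.
\end{displaymath}
Both converge absolutely on $|x|<1/p$, which also justifies the termwise manipulation.

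Rather than splitting off $np^n$ and differentiating, I will work backwards from the claimed right-hand side and compare coefficients. Expand $\frac{2p^2+2t-1-2p\epsilon_t}{1-px}$, whose $x^n$-coefficient is $p^n(2p^2+2t-1-2p\epsilon_t)$. For $\frac{2p^{2}-2px}{(1-px)^{2}}$, the $x^n$-coefficient is
\begin{displaymath}
2p^2(n+1)p^n-2p\cdot n p^{n-1}=p^n\bigl(2p^2(n+1)-2n\bigr),
\end{displaymath}
where the second term is the shifted series and vanishes at $n=0$. Adding the two coefficients gives $p^n\bigl(4p^2+2t-1-2p\epsilon_t+(2p^2-2)n\bigr)$, which matches $a_{n+3,t}$ after multiplying by $\frac{p-1}{2}$. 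Setting $\epsilon_t=0$ and $\epsilon_t=1$ yields the two displayed formulas of the statement.

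There is no real obstacle here. The only step needing care is the bookkeeping in the $-2px/(1-px)^2$ term: its shift by one index produces the coefficient $np^n$ (not $(n+1)p^n$), and it is exactly this that turns $2p^2(n+1)$ into $(2p^2-2)n+2p^2$. I will also confirm at $n=0$ that the formula reproduces the $s=3$ values $\frac{p-1}{2}(4p^2+2t-1)$ and $\frac{p-1}{2}(4p^2+2t-2p-1)$ computed in the proof of Theorem~\ref{p1}.
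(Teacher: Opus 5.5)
Your proposal is correct and follows the same route the paper indicates: the paper omits the proof as a "straightforward calculation" from the closed forms, and you carry it out by taking the closed form of Theorem \ref{p1}, reindexing with $n=s-3$, and matching $x^n$-coefficients against $\frac{1}{1-px}$ and $\frac{1}{(1-px)^2}$. Your coefficient bookkeeping is sound in both cases $\epsilon_t\in\{0,1\}$, including the shifted term $-2px/(1-px)^2$, which contributes $-2np^n$.
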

	\begin{theorem}\label{gf k2}
		The generating function for the sequence of $p^{(k)}$-Fibonacci numbers $a_{s}=\mathcal{M}\Big(\lambda\big(2r;(x_{(2s,k)},x_{(s,k)}+l)\big)\Big)$, defined for each interval in Theorem \ref{greater}, is given below:
		\begin{enumerate}
			\item If $a_{s}=\frac{p^{s-2-k}(p-1)}{2}(2(s-1)p^{k+1}-2(s-k)+3)$ then $$F(x)=\frac{p-1}{2}\left(\frac{2kp^{k+1}-1}{1-px}+\frac{2p^{k+1}-2px}{(1-px)^{2}}\right),~|x|<\frac{1}{p}.$$
			\item 
			\begin{enumerate}
				\item If $a_{s}=b_{s,t,i}$
				\begin{align*}
					&=\frac{p^{s-2-k}(p-1)}{2}\Big(2(s-1)p^{k+1}-2(s-k)+3+2t\sum\limits_{j=0}^{k-1}p^{j}
					-2\sum\limits_{\mu=0}^{k-i-2}p^{\mu}\Big)
				\end{align*}
				  then 
				\begin{align*}
					F_{t,i}(x)&=\frac{p-1}{2}\Bigg(\Big(2t\sum_{j=0}^{k-1}p^{j}-2\sum_{\mu=0}^{k-i-2}p^{\mu}+2kp^{k+1}-1\Big)\frac{1}{1-px}+\frac{2p^{k+1}-2px}{(1-px)^{2}}\Bigg)
				\end{align*}
				where $|x|<\frac{1}{p},~1\leq t\leq \frac{p-1}{2}$ and $0\leq i \leq k-2$.
				\item 	If $a_{s}=b_{s,t,i}$
				\begin{align*}
				&=\frac{p^{s-2-k}(p-1)}{2}\Bigg(2(s-1)p^{k+1}-2(s-k)+3+2(t-p)\sum\limits_{j=0}^{k-1}p^{j}
				-2\sum\limits_{\mu=0}^{k-i-2}p^{\mu}\Bigg)
				\end{align*} 				
				then $F_{t,i}(x)$ is
				\begin{align*}
					\frac{p-1}{2}\Bigg(\Big(2(t-p)\sum_{j=0}^{k-1}p^{j}-2\sum_{\mu=0}^{k-i-2}p^{\mu}+2kp^{k+1}-1\Big)\frac{1}{1-px}+\frac{2(p^{k+1}-px)}{(1-px)^{2}}\Bigg)
				\end{align*}
				where $|x|<\frac{1}{p},~\frac{p+1}{2}\leq t\leq p-1$ and $0\leq i \leq k-2$.
			\end{enumerate}
			\item \begin{enumerate}
				\item If $a_{s}=b_{s,t}=\frac{p^{s-2-k}(p-1)}{2}\left(2(s-1)p^{k+1}-2(s-k)+3+2t\sum\limits_{j=0}^{k-1}p^{j}\right)$  then
				\begin{align*}
					F_{t}(x)&=\frac{p-1}{2}\Bigg(\Big(2t\sum\limits_{j=0}^{k-1}p^{j}+2kp^{k+1}-1\Big)\frac{1}{1-px}+\frac{2p^{k+1}-2px}{(1-px)^{2}}\Bigg),~|x|<\frac{1}{p}  
				\end{align*}
				where $1\leq t\leq \frac{p-3}{2}$.
				\item If $a_{s}=b_{s,t}=\frac{p^{s-2-k}(p-1)}{2}\left(2(s-1)p^{k+1}-2(s-k)+3+(2t-2p)\sum\limits_{j=0}^{k-1}p^{j}
				\right)
				$, then
				\begin{align*}
					F_{t}(x)&=\frac{p-1}{2}\Bigg(\Big((2t-2p
					)\sum\limits_{j=0}^{k-1}p^{j}+2kp^{k+1}-1\Big)\frac{1}{1-px}+\frac{2p^{k+1}-2px}{(1-px)^{2}}\Bigg),			
					\end{align*} 
				where $|x|<\frac{1}{p}$ and $\frac{p+1}{2}\leq t\leq p-1$.
			\end{enumerate} 
			\item If $a_{s}=b_{s,i^{'}}$
			\begin{align*}
				&=\frac{p^{s-2-k}(p-1)}{2}\Big(2(s-1) p^{k+1}-2(s-k)+3-p^{k}-2\sum\limits_{\theta = 1}^{k-1}p^{\theta}+2\sum\limits_{\mu =k-i^{'}+1}^{k}p^{\mu}-1\Big),
			\end{align*}
			 then $F_{i^{'}}(x)$ is
			 \begin{align*}
			 	&=\frac{p-1}{2}\Bigg(\Big(2kp^{k+1}-p^{k}-2\sum\limits_{\theta=1}^{k-1}p^{\theta}+2\sum_{\mu=k-i^{'}+1}^{k}p^{\mu}-2\Big)\frac{1}{1-px}+\frac{2(p^{k+1}-px)}{(1-px)^{2}}\Bigg)
			 \end{align*} 
			where $1\leq i^{'}\leq k-1$ and $|x|<\frac{1}{p}$.
			\item If  $a_{s}= \frac{p^{s-2-k}(p-1)}{2}\left(2(s-1)p^{k+1}-2(s-k)+3-p^{k}-2\sum\limits_{\theta = 1}^{k-1}p^{\theta}-1\right)$ then $$F(x)=\frac{p-1}{2}\Bigg(\Big(2kp^{k+1}-2-p^{k}-2\sum_{\theta=1}^{k-1}p^{\theta}\Big)\frac{1}{1-px}+\frac{2p^{k+1}-2px}{(1-px)^{2}}\Bigg),~|x|<\frac{1}{p}$$
		\end{enumerate}
	\end{theorem}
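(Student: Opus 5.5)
All five families in the statement have the same shape, so I will prove a single uniform computation and then read off each case by choosing a constant. Every closed form in Theorem~\ref{greater} can be written as
$$a_{s}=\frac{p^{s-2-k}(p-1)}{2}\Big(2(s-1)p^{k+1}-2(s-k)+C\Big),\qquad s\geq k+2,$$
where $C$ does not depend on $s$. The values are:
\begin{itemize}
\item case 1: $C=3$;
\item case 2(a): $C=3+2t\sum_{j=0}^{k-1}p^{j}-2\sum_{\mu=0}^{k-i-2}p^{\mu}$;
\item cases 2(b), 3(a), 3(b): the analogous expression with $2t$ replaced by $2(t-p)$ where appropriate, and with the $\mu$-sum dropped in case 3;
\item case 4: $C=3-p^{k}-2\sum_{\theta=1}^{k-1}p^{\theta}+2\sum_{\mu=k-i'+1}^{k}p^{\mu}-1$;
\item case 5: $C=3-p^{k}-2\sum_{\theta=1}^{k-1}p^{\theta}-1$.
\end{itemize}
I take these formulas from Theorem~\ref{greater} as given and do not re-derive them.

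\textbf{Key step: the general lemma.} Put $n=s-(k+2)\geq 0$. Then $s-1=n+k+1$ and $s-k=n+2$, so
$$a_{s}=\frac{p-1}{2}\,p^{n}\Big(n\,(2p^{k+1}-2)+\big(2(k+1)p^{k+1}-4+C\big)\Big).$$
For $|x|<1/p$ we use the standard series
$$\sum_{n\geq0}p^{n}x^{n}=\frac{1}{1-px},\qquad \sum_{n\geq0}np^{n}x^{n}=\frac{px}{(1-px)^{2}}.$$
These give
$$F(x)=\frac{p-1}{2}\left(\frac{(2p^{k+1}-2)px}{(1-px)^{2}}+\frac{2(k+1)p^{k+1}-4+C}{1-px}\right).$$

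\textbf{Matching the stated form.} Split off $2p^{k+1}/(1-px)$ from the second term and rewrite it as $2p^{k+1}(1-px)/(1-px)^{2}$. The two numerators over $(1-px)^{2}$ then combine as
$$(2p^{k+1}-2)px+2p^{k+1}(1-px)=2p^{k+1}-2px.$$
Hence
$$F(x)=\frac{p-1}{2}\left(\frac{2kp^{k+1}-4+C}{1-px}+\frac{2p^{k+1}-2px}{(1-px)^{2}}\right).$$
It remains to substitute each value of $C$ and check that $2kp^{k+1}-4+C$ equals the stated coefficient of $1/(1-px)$:
\begin{itemize}
\item case 1, $C=3$: coefficient $2kp^{k+1}-1$;
\item case 2(a): coefficient $2t\sum_{j}p^{j}-2\sum_{\mu}p^{\mu}+2kp^{k+1}-1$;
\item cases 2(b) and 3: the same computation;
\item cases 4 and 5: the extra $-1$ inside the bracket produces the stated $-2$.
\end{itemize}
As a consistency check, the same lemma with $k$ replaced appropriately should reproduce Theorems~\ref{gf k0} and \ref{gf k1}.

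\textbf{Expected obstacle.} The only real risk is bookkeeping. First, the exponent $p^{s-2-k}$ must align with the index shift $x^{s-(k+2)}$. Second, the extra $-1$ terms in cases 4 and 5 must be transcribed correctly from Theorem~\ref{greater}. I will check both by comparing the constant term $F(0)=a_{k+2}$ directly against Theorem~\ref{greater} at $s=k+2$ in each case.
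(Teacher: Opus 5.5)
Your proposal is correct and is exactly the routine calculation that the paper leaves out; the paper's only justification is a remark that the result "follows from straightforward calculation." You write every closed form of Theorem~\ref{greater} as $\frac{p-1}{2}p^{n}\bigl(n(2p^{k+1}-2)+2(k+1)p^{k+1}-4+C\bigr)$ with $n=s-(k+2)$; summing the two standard series then gives the coefficient $2kp^{k+1}-4+C$ of $\frac{1}{1-px}$, which I checked matches all five stated cases, and the same formula with $k=0,\ C=3$ and with $k=1,\ C=2t+3$ (resp.\ $2t-2p+3$) also recovers Theorems~\ref{gf k0} and~\ref{gf k1}.
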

	\begin{remark}
		The proofs of Theorems \ref{gf k0}, \ref{gf k1} and \ref{gf k2} follow from straightforward calculation, which can be carried out either manually or using mathematical software such as Maple or Sage. We omit the details, as the computation is standard.
	\end{remark}
	
	\section{Acknowledgments}
	The third author acknowledges financial support from the Council of Scientific and Industrial Research (CSIR), India, in the form of CSIR-SRF (File No: 09/0115(16295)/2023-EMR-I) for conducting this research.

	\hrule
	\vspace{0.5em}
	
	2020 \textit{Mathematics Subject Classification:} Primary 05E10; 	Secondary 05A15, 05E16.
	
	\textit{Keywords}: Bratteli diagram, hook partition, inversion, descent, Fibonacci number.
	\vspace{0.5em}
	\hrule
	\vspace{0.5em}
	
	(Concerned with the sequences \seqnum{A391520}.) 
	\vspace{0.5em}
	\hrule
\end{document}